\documentclass[12pt]{amsart}
\usepackage{amsmath,amsthm}
\usepackage{geometry}                
\geometry{letterpaper}                   

\usepackage{graphicx}
\usepackage{amssymb}
\usepackage{epstopdf}
 \usepackage{xcolor}
\DeclareGraphicsRule{.tif}{png}{.png}{`convert #1 `dirname #1`/`basename #1 .tif`.png}

\newtheorem{theorem}{Theorem}[section]
\newtheorem{corollary}[theorem]{Corollary}
\newtheorem{lemma}[theorem]{Lemma}
\newtheorem{proposition}[theorem]{Proposition}

\theoremstyle{definition}
\newtheorem{definition}[theorem]{Definition}

\theoremstyle{remark}
\newtheorem{remark}[theorem]{Remark}

\numberwithin{equation}{section}

\begin{document}

\title[] 
 {Necessary and Sufficient Conditions for a Triangle Comparison Theorem}

\author{James J. Hebda }
\address{Department of Mathematics and Statistics\\ 
Saint Louis University, St. Louis, MO 63103}
\email{james.hebda@slu.edu} 

 
 \author{Yutaka Ikeda}

\address{8 Spring Time CT\\ St. Charles, MO 63303 }
\email{ yutaka.ikeda@gmail.com}

\subjclass[2010]{Primary: 53C20; Secondary: 53C22}



\maketitle

\begin{abstract}
We prove a version of  Topogonov's triangle comparison theorem with surfaces of revolution as model spaces.  Given a  model surface and a Riemannian manifold with a fixed base point, we give necessary and sufficient conditions under which every geodesic triangle in the manifold with a vertex at the base point has a corresponding Alexandrov triangle in the model.   Under these conditions we  also prove a version of the Maximal Radius Theorem and a  Grove--Shiohama type Sphere Theorem.
\end{abstract}

\section{Introduction}
 
Let $\widetilde M$ be a   simply connected,  complete, 2--dimensional Riemannain manifold which is rotationally symmetric about its base point $\tilde o$, and  let $M$ be a complete Riemannian manifold with a fixed base point $o$. The generalized Toponogov comparison theorem asserts that, under appropriate hypotheses,  geodesic triangles $\triangle opq$ in $M$ have  a corresponding  geodesic triangle $\triangle \tilde o\tilde p\tilde q$ in $\widetilde M$,  whose  corresponding sides have the same lengths and  whose  corresponding angles have smaller measures.  Different versions of this theorem have appeared in the literature under increasingly more general hypotheses. For  a sample of this literature,  see
   \cite{DE,UA,YI,MK,YM,YM2,IMS,T, KT, KT2,ST,ISU}.   Typical hypotheses include bounding the curvature of $M$  from below by that of $\widetilde M$ and  imposing additional restrictions either on  $\widetilde M$ or on the triangles under consideration. 
In any case,  the hypotheses assumed in these works are stronger than needed.  In \cite{HI}, we proved a generalized Toponogov Theorem in which   the usual hypothesis  on curvature was replaced by a weaker notion,  called weaker radial attraction.  However, in that paper $\widetilde M$ was required to have the special property that the cut locus of every point $\tilde p$  in $\widetilde M$ is contained in  the meridian opposite to $\tilde p$.
In this paper, we place no restriction on  $\widetilde M$, but instead require that the geodesics in the space $M$ do not have bad encounters with  the cut loci in $\widetilde M$.  As it turns out,  the  assumption  of both  weaker radial attraction and  no bad encounters  is a necessary and sufficient condition for
the existence of a comparison triangle $\triangle \tilde o \tilde p \tilde q$ in $\widetilde M$ for every geodesic triangle $\triangle opq$ in $M$.  The  condition of no bad encounters,  which is defined in Section 4, is in the spirit of, but not equivalent to, a condition in \cite{ISU} that serves a similar purpose.

Before stating the main result, we introduce some notation and terminology. 

\begin{definition} If $\triangle opq$ is a geodesic triangle in $M$,  $ \sigma$ will always denote the side joining $p$ to $q$, $\gamma$  the  side joining $o$ to $q$ and $\tau$ the side joining $o$ to $p$.  (See Figure 1.) The corresponding sides in a corresponding geodesic triangle $\triangle\tilde o \tilde p\tilde q$ in $\widetilde M$ will be denoted $\tilde\sigma$, $\tilde\gamma$ and $\tilde\tau$ respectively.
We will say that $\triangle \tilde o\tilde p\tilde q$ is an \emph{Alexandrov triangle} corresponding to 
 $\triangle opq$ if the following three properties are satisfied:
 \begin{enumerate}
\item  Equality of corresponding sides:
$$ d(o,p) = d(\tilde o,\tilde p), \quad
 d(o,q) = d(\tilde o,\tilde q), \quad
 d(p,q) = d(\tilde p,\tilde q).$$
 \item 
 Alexandrov convexity 
 from the base point:  $$ d(\tilde o,\tilde \sigma(t)) \leq d(o,\sigma(t))\enspace  \forall t \in [0, d(p,q)].$$
 \item  The angle comparisons:
$$\measuredangle \tilde p \leq \measuredangle p,\quad \measuredangle \tilde q \leq \measuredangle  q.$$
 \end{enumerate}
 Here $d$ is the distance function in $M$ and $\widetilde M$.
\end{definition}

\begin{remark}
If $ d(o,p)$ and $d(o,q)$ are both strictly less than $\ell =\sup_{\tilde q \in \widetilde M} d(\tilde o,\tilde q)$, then (1) and (2) automatically imply  (3) by \cite[Lemma 4.6]{HI}.  
\end{remark}

\begin{theorem}\label{t:main}
Let $(M,o)$ be a complete pointed Riemannian manifold, and let $(\widetilde M,\tilde o)$ be  a   simply connected,  complete, 2--dimensional Riemannain manifold which is rotationally symmetric about $\tilde o$.  Every geodesic triangle $\triangle opq$ in $M$ has a corresponding Alexandrov triangle $\triangle \tilde o\tilde p\tilde q$ in $\widetilde M$ 
if and only if $\widetilde M$ has weaker radial attraction than $M$ and no minimizing geodesic in $M$ has a bad encounter with the cut locus in $\widetilde M$.

Furthermore, under these equivalent conditions, in addition to properties (1), (2), and (3), the Alexandrov triangle $\triangle\tilde o\tilde p\tilde q$ corresponding to  $\triangle opq$ also satisfies:
\begin{enumerate}
\item[(4)] the angle comparison at the base:
$$\measuredangle \tilde o \leq \measuredangle  o,$$ and
\item[(5)] the convexity conditions:
$$ d(\tilde p,\tilde \gamma(s)) \leq d(p,\gamma(s))\enspace  \forall s \in [0, d(o,q)],$$
$$ d(\tilde q,\tilde \tau(s)) \leq d(q,\tau(s))\enspace  \forall s \in [0, d(o,p)].$$
 \end{enumerate}
\end{theorem}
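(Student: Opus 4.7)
The proof splits into sufficiency, necessity, and the additional properties (4) and (5). I would begin with the sufficiency direction, which is the technical heart. Fix $\triangle opq$ in $M$ with $\sigma$ parametrized by arc length from $p$ to $q$, and let $T \subset [0, d(p,q)]$ denote the set of parameters $t$ for which the sub-triangle $\triangle op\sigma(t)$ admits an Alexandrov triangle $\widetilde{\triangle}_t$ in $\widetilde M$. Since $0 \in T$ trivially, the plan is to show $T$ is both open and closed, hence all of $[0, d(p,q)]$. Openness uses the no-bad-encounter hypothesis: as $t$ advances, the candidate side $\tilde\sigma$ in $\widetilde M$ must occasionally cross the cut locus of $\tilde o$, and the no-bad-encounter condition is precisely what allows the construction to proceed through such crossings without losing (1) or (2). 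Closedness uses weaker radial attraction: for $t_n \uparrow t_\infty$, the approximating triangles $\widetilde{\triangle}_{t_n}$ have sides satisfying (1) and (2), and weaker radial attraction controls how $d(\tilde o, \tilde\sigma(t))$ can decrease along the candidate $\tilde\sigma$, allowing the inequalities to pass to the limit. Once (1) and (2) hold at $t = d(p,q)$, property (3) follows from Remark 1.2 whenever both endpoints are at distance less than $\ell$, and by a separate limit argument when one endpoint attains $\ell$.

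For necessity, I would work contrapositively by constructing explicit triangles that fail to admit an Alexandrov triangle when either hypothesis is violated. If weaker radial attraction fails at some point $p$ and some direction, then combining a short minimizing geodesic emanating in that direction with a minimal segment from $o$ produces a thin triangle in $M$ for which any candidate in $\widetilde M$ must, to first order along $\tilde\sigma$, violate (2). If instead some minimizing geodesic in $M$ has a bad encounter with the cut locus of a point in $\widetilde M$, then a triangle localized around the encounter forces the candidate $\tilde\sigma$ to jump across the cut locus in a way that cannot simultaneously preserve the distance equality (1) and the convexity inequality (2); the obstruction is local and quantitative, and essentially amounts to the very definition of \emph{bad encounter} in Section 4.

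Under the equivalent conditions, property (5) is obtained by re-running the sufficiency argument on the sub-triangles $\triangle po\gamma(s)$ and $\triangle qo\tau(s)$: the Alexandrov convexity (2) applied to these smaller triangles, combined with the rigidity in $\widetilde M$ forced by rotational symmetry about $\tilde o$ and the distance equalities (1), yields the required convexity bounds from $\tilde p$ and $\tilde q$. The base-angle comparison (4) then follows by a first-variation argument combining (1), (2), and (5), or equivalently by a hinge comparison in the model applied to $\tilde\tau$ and $\tilde\gamma$. The main obstacle throughout is the closedness step in the sufficiency argument: the candidate $\tilde\sigma$ may approach the cut locus of $\tilde o$ in $\widetilde M$, and showing that weaker radial attraction and absence of bad encounters interact correctly at such limit points---so that the limit triangle still satisfies (1) and (2)---is the central technical task, and is precisely what the hypotheses are designed to accommodate.
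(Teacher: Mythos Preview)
Your outline has the right overall shape but contains one factual error and two genuine gaps.

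First, the error: bad encounters are with the cut locus $C(\tilde p)$ of the comparison point $\tilde p$ in $\widetilde M$, not with the cut locus of $\tilde o$. The side $\tilde\sigma$ emanates from $\tilde p$, and the obstruction arises when the image $F(\sigma(t))$ in the reference plane lands on $\widetilde F(C(\tilde p))$; the base point $\tilde o$ has no cut locus issues relevant here.

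Second, your open--closed argument along $\sigma$ misallocates the hypotheses and omits a step that the paper handles differently. In the paper (Lemma~\ref{l:nobad}) the Alexandrov triangle for a fixed $\triangle opq$ is produced all at once, not by continuation in $t$: one considers the one-parameter family $f_\phi(t)=L_{\tilde o}(\varsigma_\phi(t))$ indexed by the angle $\phi$ at $\tilde p$, takes $\bar\phi$ to be the supremum of angles for which $f_\phi < L_o\circ\sigma$, and then uses \emph{both} hypotheses simultaneously to rule out a touching point---weaker radial attraction (via Proposition~\ref{p:geodcomp}) when the touch occurs along the geodesic portion of $\varsigma_{\bar\phi}$, and no-bad-encounter when it occurs along the cut-locus portion. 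Your split ``openness $\leftrightarrow$ no bad encounter, closedness $\leftrightarrow$ weaker radial attraction'' does not match this and, as stated, closedness needs no hypothesis at all (limits of Alexandrov triangles are Alexandrov triangles by continuity), while openness needs both.

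Third, and most seriously, you never address why a comparison triangle \emph{exists} in $\widetilde M$, i.e.\ why $F(q)\in\widetilde F(\widetilde M)$. This is not automatic when $\widetilde M$ is compact. The paper obtains it by a bootstrap along $\gamma$ rather than $\sigma$: one shows (Lemma~\ref{l:5.5}) that the $\theta$--coordinate of $\tilde q_s=\widetilde F^{-1}(F(\gamma(s)))$ is nonincreasing in $s$, using the angle comparison (3) at $q_s$ and a Dini-derivative argument. This monotonicity simultaneously yields (4) and the first half of (5), and it also guarantees (Lemma~\ref{l:5.7}) that $F(\gamma(s))$ cannot exit $\widetilde F(\widetilde M)$ as $s$ increases. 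Your proposed derivation of (5) by ``re-running the sufficiency argument on $\triangle po\gamma(s)$'' would give Alexandrov convexity from $o$ for those sub-triangles, but that is not the inequality in (5); what is needed is precisely the $\theta$--monotonicity, which your sketch does not supply.
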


\begin{remark}  (i)  The necessity of  the weaker radial attraction hypothesis for the existence of Alexandrov triangles  was proved in \cite[Proposition 4.13]{HI}. (ii) The angle comparison at the base (5) was observed in \cite{IMS}  when $\widetilde M$ is a Von Mangoldt surface that bounds the radial curvature of $M$ from below.
(iii) The convexity conditions (5) in Theorem \ref{t:main} seem not to have been noted  in the previous literature. (iv) Theorem \ref{t:main} generalizes the main theorem of \cite{HI} because 
the assumption made in \cite{HI} that the cut loci of points in $\widetilde M$ are contained in the opposite meridian automatically  entails the hypothesis  that there are no bad encounters.

\end{remark}

When $\widetilde M$ has weaker radial attraction than $M$ we prove an analog of the Rauch  Theorem that compares the lengths of Jacobi fields along radial geodesics in the two manifolds.   Consequently, if $\widetilde M$ is compact, then $M$ is compact and $\max_{q \in M} d(o,q)  \leq \ell = \max_{\tilde q \in \widetilde M} d(\tilde o,\tilde q)$ with equality holding if and only if the metric on $M$ takes a special form. This Maximal Radius Theorem generalizes a result in \cite{IMS1}.

\begin{theorem}[Maximal Radius Theorem]\label{t:MRT}
Suppose $(\widetilde M, \tilde o)$ is a compact model surface  with radius $\ell < \infty$, whose metric takes the form  
$$ dr^2 + y(r)^2 d\theta^2$$
in polar coordinates $(r,\theta)$ about $\tilde o$.
Suppose that every geodesic triangle $\triangle opq$ in $M$ has a corresponding  Alexandrov triangle $\triangle\tilde o\tilde p\tilde q$ in $\widetilde M$.
If there is a point $q$ in $M$  with $d(o,q) = \ell$, then $M$ is diffeomorphic to $S^n$ and its metric takes the form
$$ dr^2 + y(r)^2 d\theta_{n-1}^2$$
in geodesic coordinates about $o$ where $d\theta_{n-1}^2$ is the standard metric on the unit $(n-1)$ sphere.
\end{theorem}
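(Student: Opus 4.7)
The plan is to exploit the degenerate Alexandrov triangles forced by $\tilde q$ being antipodal to $\tilde o$ in the compact model, and then use the Rauch-type Jacobi-field comparison to pin down the metric of $M$. First I would observe that in $\widetilde M$ every point $\tilde x$ lies on the meridian from $\tilde o$ to its unique antipode at distance $\ell$, so $d(\tilde o, \tilde x) + d(\tilde x, \tilde q) = \ell$. Applying the Alexandrov-triangle property to $\triangle o x q$ for arbitrary $x \in M$ and invoking side-length equality (property (1) of Definition~1.1) transfers this identity to $M$:
$$d(o,x) + d(x,q) = \ell \qquad \text{for every } x \in M.$$

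Next I would show using only this identity that every unit-speed radial geodesic $\gamma_v$ from $o$ is minimizing on $[0,\ell]$ and satisfies $\gamma_v(\ell) = q$. If $T < \ell$ were the supremum of times on which $\gamma_v|_{[0,t]}$ minimizes, then concatenating $\gamma_v|_{[0,T]}$ with a minimizer of length $\ell - T$ from $\gamma_v(T)$ to $q$ would yield a curve from $o$ to $q$ of length exactly $\ell = d(o,q)$; smoothness of minimizers forces this concatenation to be a smooth geodesic extension of $\gamma_v$ that continues to minimize past $T$, contradicting the choice of $T$. A similar concatenation argument applied to two preimages of a common point shows $\exp_o\colon B(0,\ell)\subset T_oM \to M\setminus\{q\}$ is injective; minimality precludes conjugate points before $\ell$, so $\exp_o$ is also a local diffeomorphism, hence a diffeomorphism. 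Therefore $M$ is diffeomorphic to the one-point compactification $S^n$ of $B(0,\ell)$.

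For the metric, fix a unit $v \in T_oM$ and a unit $e \perp v$. Varying $v$ in the direction $e$ generates a Jacobi field $J$ along $\gamma_v|_{[0,\ell]}$ with $J(0) = 0$, $J'(0) = e$, and $J(\ell) = 0$ (since every nearby $\gamma_{v_s}$ also ends at $q$). The Rauch-type theorem announced before Theorem~\ref{t:MRT} gives $|J(t)| \leq y(t)$ on $[0,\ell]$. To upgrade this to equality I would invoke the index form on the minimizer $\gamma_v|_{[0,\ell]}$: for a parallel unit extension $E$ of $e$, integration by parts against the model Jacobi equation $y'' + k y = 0$ reduces $I(yE, yE)$ to $\int_0^\ell (k - K_E) y^2\,dt$, where $K_E$ is the radial sectional curvature of $M$ along $\gamma_v$. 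Weaker radial attraction keeps this integrand non-positive while minimality forces $I(yE,yE) \geq 0$, so $K_E \equiv k$ along $\gamma_v$ and $|J(t)| = y(t)$ identically. A polarization argument on Jacobi fields generated by an orthonormal basis of $v^\perp$ shows they remain mutually orthogonal with common length $y(t)$, so the pullback of the metric of $M$ via $\exp_o$ in polar coordinates is exactly $dr^2 + y(r)^2\,d\theta_{n-1}^2$, which extends smoothly across $q$.

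The delicate step, and the main obstacle I expect, is the Jacobi-field rigidity: upgrading the Rauch inequality $|J| \leq y$ together with the boundary equalities $|J(0)| = |J(\ell)| = 0$ to pointwise equality via the index-form argument requires verifying that weaker radial attraction supplies a sufficiently strong pointwise curvature comparison along radial geodesics, which is stronger than its literal statement about Jacobi fields.
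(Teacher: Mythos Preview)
Your reduction to the single-cut-point situation is essentially the same as the paper's: the degenerate Alexandrov triangle forces $d(o,x)+d(x,q)=\ell$ for every $x$, hence every radial geodesic from $o$ minimizes all the way to $q$ and the cut locus of $o$ is $\{q\}$. That part is fine.

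The gap is exactly where you suspected it. Your index-form computation
\[
I(yE,yE)=\int_0^\ell \bigl(k(t)-K_E(t)\bigr)\,y(t)^2\,dt
\]
is correct, and minimality of $\gamma_v$ on $[0,\ell]$ does give $I(yE,yE)\ge 0$. But the step ``weaker radial attraction keeps this integrand non-positive'' asserts the pointwise radial-curvature bound $K_E\ge k$, and that is strictly stronger than weaker radial attraction. The paper makes this explicit (Remark~3.2(i)): a lower radial-curvature bound implies weaker radial attraction, \emph{but not conversely}. Weaker radial attraction is a Hessian (equivalently, shape-operator) comparison, and through the Riccati equation this does not recover a pointwise sectional-curvature inequality. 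So from the hypotheses you actually have, you cannot conclude that the integrand has a sign, and the rigidity argument collapses.

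The paper closes this gap by a global volume trick rather than a pointwise one. Since every geodesic from $o$ reaches $q$ at time $\ell$, one gets a smooth map $\Psi:\Sigma_o\to\Sigma_q$, $\Psi(X)=-\gamma_X'(\ell)$, between the unit tangent spheres. The Rauch-type inequality $|J(t)|\le y(t)$ together with l'H\^opital at $t=\ell$ yields $|\Psi_\ast Y|=|J'(\ell)|\le 1$ for every unit $Y$, so the Jacobian of $\Psi$ satisfies $|\lambda|\le 1$. But $\Psi$ is a diffeomorphism of $(n-1)$-spheres, so $\int_{\Sigma_o}|\lambda|\,\Omega\ge\bigl|\int_{\Sigma_o}\Psi^\ast\hat\Omega\bigr|=\mathrm{Vol}(S^{n-1})$, forcing $|\lambda|\equiv 1$ and hence $|\Psi_\ast Y|\equiv 1$. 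Feeding this back into the equality case of the Rauch-type theorem gives $|J(t)|=y(t)$ for every such Jacobi field, and Proposition~3.11 then shows each $J$ is $y(t)$ times a parallel unit field, which is what produces the warped-product form of the metric. The key idea you are missing is this degree/volume argument that converts the one-sided Jacobian bound into an equality without ever invoking a pointwise curvature comparison.
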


This paper is organized as follows:
In Section 2 we discuss  the reference maps of $M$ and $\widetilde M$.  The reference maps have used in   \cite{ISU}.  In this section we also give a sufficient condition for the existence of an Alexandrov triangle corresponding to a given geodesic triangle in terms of the slope field in the reference space of $\widetilde M$.  In Section 3 we discuss the notion of weaker radial attraction and draw some consequences. In particular we prove an analog of the Rauch Theorem and deduce  Theorem \ref{t:MRT}. In Section 4 we introduce the notion of  geodesics in $M$ having bad encounters with the cut locus in $\widetilde M$, and  prove the necessity of the hypothesis of no bad encounters in Theorem \ref{t:main}. We also investigate conditions that prevent bad encounters. In Section 5 we prove sufficiency of the conditions in Theorem \ref{t:main}. The examples in  Section 6  illustrate the hypotheses of Theorem \ref{t:main}.    Section 7  provides two topological applications of the main theorem including a Grove--Shiohama type Sphere Theorem. In Section 8 we employ our methods  to establish  some of the results in \cite{ISU}. Finally in Section 9, we calculate the slope field in the reference space for $\widetilde M$ at cut points.

\section{The Reference Map}

Let $M$ be a complete Riemannian manifold with base point $o$.  
Fix a point   $p \in M$ and fix a minimizing geodesic $\tau$  from $o$ to $p$. For any $q\in M$ different from $o$ and $p$, we determine a geodesic triangle $\triangle opq$ by choosing a minimizing geodesic $\sigma$ joining $p$ to $q$ and a minimizing geodesic $\gamma$ joining $o$ to $q$.  The notation $\triangle opq$ can be ambiguous  when $q$ is in the cut locus of either $p$ or  $o$ since the triangle depends on the choices of $\sigma$ and $\gamma$, but  in what follows,   the context will always make clear what geodesics form the sides of the triangle $\triangle opq$.
The reference map $F: M \rightarrow \mathbf{R}^2$ is defined by $F(q) = (d(p,q),d(o,q))$.  Clearly $F$ depends upon the base point $o$ and on $p$.  Setting $ r_0 = d(o,p)$, the triangle inequality implies that the image of $M$ under $F$ is contained in a certain half infinite oblique strip in the plane, that is,
$$ F(M) \subset   \{(x,y) \in \mathbf{R}^2: x+y \geq r_0,  -r_0 \leq y -x \leq r_0 \}. $$

\begin{figure}[htbp]
\begin{center}
\setlength{\unitlength}{1 cm}
\begin{picture}(6,6)(-1,-1)
{\color{gray}}
\thicklines
\put(0,2){\line(1,-1){2}}
\put(-0.3,2){\small$p$}
\put(2,-.3){\small$o$}
\put(3.1,4){\small$q$}
\put(2,0){\line(1,4){1}}
\put(0,2){\line(3,2){3}}
\put(2.7,2){\small$\gamma$}
\put(1.2,3){\small$\sigma$}
\put(1,1.2){\small$\tau$}
\end{picture}
\begin{picture}(6,6)(-1,-1)
{\color{gray}
\put(0,0){\line(1,0){5}}
\put(0,0){\line(0,1){5}}}
\put(2.5,4.5){\line(1,-1){2}}
\put(.7,1.3){\line(1,1){2.5}}
\thicklines
\put(0,2){\line(1,-1){2}}
\put(0,2){\line(1,1){3}}
\put(2,0){\line(1,1){3}}
\put(-0.7,1.8){$^{ F(p)}$}
\put(1.8,-.5){$^{F(o)}$}
\put(3.3,3.6){$^{F(q)}$}
\qbezier(2,0)(3.2,2.9)(3.2,3.8)
\qbezier(0,2)(3,4)(3.2,3.8)
\put(0.4,0.4){$^{F(\tau)}$}
\put(3,2){$^{F(\gamma)}$}
\put(1,2.3){$^{F(\sigma)}$}
\end{picture}
\caption{$\triangle opq$ and $F(\triangle opq)$.}
\label{f:1}
\end{center}
\end{figure}
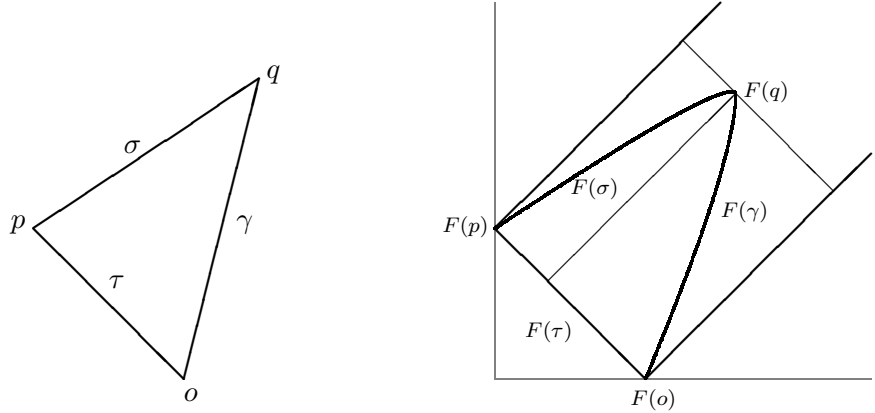

\begin{lemma} \label{l:rectangle}
Given $\triangle opq$, $F(\sigma)$ lies in the rectangle with diagonal  $ F(p) F(q)$, and $F(\gamma)$ lies in the rectangle  with diagonal $F(o) F(q)$. See Figure \ref{f:1}.
\end{lemma}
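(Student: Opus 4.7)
The plan is to describe the rectangle with diagonal $F(p)F(q)$ explicitly, and then verify membership by the triangle inequality. Figure~\ref{f:1} shows that its sides are parallel to the two bounding lines of the oblique strip containing $F(M)$, i.e., lines of slope $\pm 1$; equivalently, in the rotated coordinates $u = x+y$, $v = y-x$, it is the axis-aligned box with opposite corners at the $(u,v)$-images of $F(p)$ and $F(q)$. Concretely, a point $w \in M$ has $F(w)$ in this rectangle if and only if
\begin{align*}
r_0 &\le d(p,w) + d(o,w) \le d(p,q) + d(o,q),\\
d(o,q) - d(p,q) &\le d(o,w) - d(p,w) \le r_0.
\end{align*}

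Next I would verify these four inequalities for $w = \sigma(t)$. Parameterize $\sigma$ by arc length from $p$; since $\sigma$ is minimizing from $p$ to $q$, every initial subarc is also minimizing, so $d(p,\sigma(t)) = t$ and $d(q,\sigma(t)) = d(p,q) - t$ for $t \in [0,d(p,q)]$. The triangle inequality applied to $o, p, \sigma(t)$ gives $r_0 \le t + d(o,\sigma(t))$ and $d(o,\sigma(t)) - t \le r_0$; applied to $o, q, \sigma(t)$ and using $d(q,\sigma(t)) = d(p,q)-t$, it gives $t + d(o,\sigma(t)) \le d(p,q) + d(o,q)$ and $d(o,q) - d(p,q) \le d(o,\sigma(t)) - t$. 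These are precisely the four defining inequalities.

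The argument for $F(\gamma)$ is symmetric. The rectangle with diagonal $F(o)F(q)$ is cut out by $r_0 \le d(p,w) + d(o,w) \le d(p,q) + d(o,q)$ and $-r_0 \le d(o,w) - d(p,w) \le d(o,q) - d(p,q)$, and parameterizing $\gamma$ by arc length from $o$ so that $d(o,\gamma(s)) = s$ and $d(q,\gamma(s)) = d(o,q)-s$, the triangle inequality applied to $\{o,p,\gamma(s)\}$ and to $\{p,q,\gamma(s)\}$ produces all four bounds.

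I do not anticipate any real obstacle: the entire content is four instances of the triangle inequality, packaged twice. The only mild conceptual point is recognizing that the rectangles in the statement are oriented at $45^\circ$ to the coordinate axes of $\mathbf{R}^2$ (so that their diagonals $F(p)F(q)$ and $F(o)F(q)$ can be arbitrary segments), as is forced by the strip geometry $F(M) \subset \{x+y \ge r_0,\ |y-x| \le r_0\}$.
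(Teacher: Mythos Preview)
Your proof is correct and follows essentially the same approach as the paper: both arguments identify the rectangle via the inequalities on $x+y$ and $y-x$, then verify them by applying the triangle inequality to the triples $\{o,p,\sigma(t)\}$ and $\{o,q,\sigma(t)\}$ (respectively $\{o,p,\gamma(s)\}$ and $\{p,q,\gamma(s)\}$), using the minimizing property of $\sigma$ and $\gamma$ to express the remaining side. Your explicit remark that the rectangles are at $45^\circ$ to the axes is a helpful clarification the paper leaves implicit.
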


\begin{proof}
Let $q^\ast$ be a point on $\sigma$ between $p$ and $q$, and set 
$$ F(q^\ast) = (x,y) = (d(p,q^\ast),d(o,q^\ast)).$$
Since $\sigma$ is a minimizing geodesic, one has  $x + d(q^\ast,q) = d(p,q)$. 
Therefore, applying the triangle inequality one  obtains
\begin{eqnarray*}
d(o,p) &\leq& x + y  \cr
&\leq& x + d(q^\ast,q) + d(q,o) \cr
&=& d(p,q) + d(o,q)
\end{eqnarray*}
and 
\begin{eqnarray*}
d(o,q) -d(p,q) &=& d(o,q) - (d(q^\ast,q) + x) \cr
                     &\leq& y - x \cr
                          &\leq& d(o,p).
\end{eqnarray*}
\begin{figure}[htbp]
\begin{center}
\setlength{\unitlength}{1 cm}
\begin{picture}(6,6)(-1,-1)
{\color{gray}}
\put(0,2){\circle*{0.15}}
\put(-0.3,2){\small$p$}
\put(2,0){\circle*{0.15}}
\put(2,-.3){\small$o$}
\put(3,4){\circle*{0.15}}
\put(3.1,4){\small$q$}
\put(1.5,3){\circle*{0.15}}
\put(1.3,3.2){ $q^*$}
\put(0,2){\line(1,-1){2}}
\put(2,0){\line(1,4){1}}
\put(0,2){\line(3,2){3}}
\put(1.5,3){\line(1,-6){.5}}
\put(.58,2.6){\small$x$}
\put(1.8,1.8){\small$y$}
\end{picture}
\begin{picture}(6,6)(-1,-1)
{\color{gray}}
\put(0,2){\circle*{0.15}}
\put(-0.3,2){\small$p$}
\put(2,0){\circle*{0.15}}
\put(2,-.3){\small$o$}
\put(3,4){\circle*{0.15}}
\put(3.1,4){\small$q$}
\put(2.5,2){\circle*{0.15}}
\put(2.5,2){ $q^*$}
\put(0,2){\line(1,-1){2}}
\put(2,0){\line(1,4){1}}
\put(0,2){\line(3,2){3}}
\put(0,2){\line(1,0){2.5}}
\put(1.3,2.1){\small$x$}
\put(2.4,1){\small$y$}
\end{picture}
\caption{$\triangle opq$ with $q^*$ on $\sigma$ (left) and with $q^*$ on $\gamma$ (right).}
\label{f:1a}
\end{center}
\end{figure}
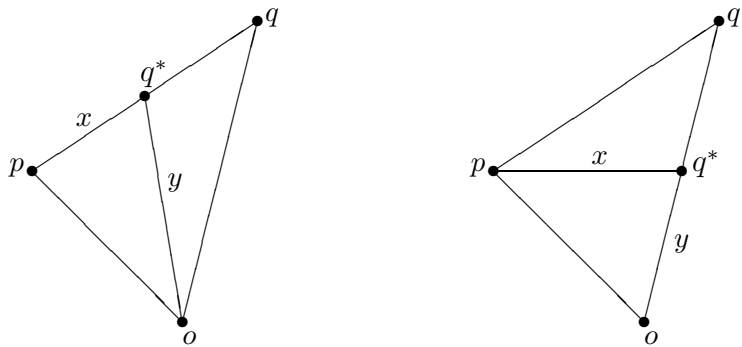
See Figure \ref{f:1a} on the left.
This proves the statement about $F(\sigma)$. If  $q^\ast$ is a point on $\gamma$ between $o$ and $q$ and $F(q^\ast) = (x,y)$, then the statement about $F(\gamma)$ is similarly proved  by showing that
\begin{equation*}
 d(o,p) \leq x + y \leq d(o,q) + d(p,q)
\end{equation*}
and
\begin{equation*}
-d(o,p) \leq y - x \leq d(o,q) - d(p,q)
\end{equation*}
using the minimizing property of $\gamma$ and the triangle inequality. See Figure \ref{f:1a} on the right.
\end{proof}
One should observe that the image $F(\sigma)$ is the graph of the function $L_o\circ\sigma$ where $L_o(q) = d(o,q)$.

\subsection{Model Surfaces}

Let $\widetilde M$ be a complete surface which is rotationally symmetric about the base point $\tilde o$, and let $(r,\theta)$ denote polar coordinates on $\widetilde M$.
Suppose that $r_0 < \ell$ where $\ell \in (0,\infty]$ denotes the supremum of the distance function from $\tilde o$.  We pick $\tilde p$ to be the point with polar coordinates $(r_0, 0)$.  We have the corresponding reference map $\widetilde F : \widetilde M \rightarrow \mathbf{R}^2$  defined by $\widetilde F(\tilde q) = (d(\tilde p,\tilde q),d(\tilde o,\tilde q))$. The meridian opposite $\tilde p$ is defined by $\theta = \pi$. Set 
$\widetilde M^+$ to be the portion of $\widetilde M$  satisfying $0 \leq \theta \leq \pi$.  Then $\widetilde F $ carries  $\widetilde M^+$ homeomorphically onto $\widetilde F(\widetilde M)$ \cite{ISU}.  The interior $int(\widetilde M^+)$ of $\widetilde M^+$ consists of the points satisfying $ 0< r<\ell$, and $0 < \theta < \pi$.

We collect here some known properties of the cut locus  $C(\tilde p)$ of $\tilde p$ in a surface of revolution. The structure of cut loci in surfaces is discussed in \cite{JH} and in \cite{SST}.
\begin{enumerate}
\item If $\widetilde M$ is compact, then $C(\tilde p)$ is a tree. This means that the minimal connected subset containing any given pair of points is homeomorphic to an interval whose endpoints are the given pair.
If $\widetilde M$ is not compact, the cut locus may be disconnected, but still each connected component is a tree.

\item  We will call the portion of $C(\tilde p)$ which is contained in the meridian opposite $\tilde p$ the trunk of $C(\tilde p)$.  A  maximal connected piece of  $C(\tilde p) \cap int(\widetilde M^+)$ will be called a positive branch of the cut locus.   When $\widetilde M$ is not compact, the branches may not be attached to the trunk.  It is possible that there are branches but the trunk is empty. In these cases we say that the branch attaches to the trunk  at infinity. 

\item If $\tilde q$ is a point on a positive branch of $C(\tilde p)$, then there will generally be at least two minimizing geodesics joining $\tilde p$ to $\tilde q$.  If $\tilde\sigma_1$ and $\tilde\sigma_2$ are two minimizing geodesics joining $\tilde p$ to $\tilde q$, we say  $\tilde\sigma_1$ is above $\tilde\sigma_2$ if $d(\tilde o,\tilde \sigma_1(t)) > d(\tilde o,\tilde \sigma_2(t))$ for all $ 0 < t < d(\tilde p,\tilde q)$.
Under the reference map the curve $\widetilde F(\tilde\sigma_1)$ lies above the curve $\widetilde F(\tilde\sigma_2)$.  There is always an uppermost $\tilde\sigma^\uparrow$ and a lowermost $\tilde\sigma^\downarrow$ minimizing geodesic joining $\tilde p$ to $\tilde q$.  However, $\tilde\sigma^\downarrow = \tilde\sigma^\uparrow$ in the case that $\tilde q$ is an endpoint of the branch of $C(\tilde p)$ and there is a unique minimizing geodesic joining $\tilde p$ to $\tilde q$.

\item If $\tilde q$ is a point on a positive branch of $C(\tilde p)$, then there is an arc $\alpha$ in the branch of $C(\tilde p)$ joining $\tilde q$ to the trunk. The points on $\alpha$ are parameterized by their distance from $\tilde p$. If $ a = d(\tilde p, \tilde q)$ and $b$ is the distance of the point where the branch attaches to the trunk, (which may be $\infty$), then $\alpha(a)=\tilde q$ and the  point $\alpha(t)$ satisfies $d(\tilde p, \alpha(t)) = t$ for all $ a \leq t < b$.  Moreover the right--hand derivative $(L_{\tilde o}\circ \alpha)_+^\prime (t)$ exists for all $t \in [a,b)$ where $L_{\tilde o}(-) = d(\tilde o, -)$.  A formula for $(L_{\tilde o}\circ \alpha)_+^\prime (t)$ is obtained in Lemma \ref{l:formula} below.
\end{enumerate}

We define a slope field $\frak{s} : \widetilde F(int (\widetilde M^+))\rightarrow \mathbf{R}$ as follows:
Let $(x,y) \in \widetilde F(\widetilde M)$.  Suppose that  $(x,y) = \widetilde F(\tilde q)$
for some  $\tilde q \in int(\widetilde M^+)$. If $\tilde q \notin C(\tilde p)$, then there exists a unique minimizing geodesic $\tilde \sigma$ emanating  from $\tilde p$ that passes through $\tilde q$.  
Thus $\tilde\sigma(x) = \tilde q$, and we
 set $\frak{s} (x,y) = (L_{\tilde o}\circ \tilde\sigma)_+^\prime(x)$.  If $\tilde q$ is on a branch of $C(\tilde p)$, then let $\alpha$ be the arc in the cut locus joining $\tilde q$ to the trunk of $C(\tilde p)$. Then $\alpha(x) = \tilde q$. Set  $\frak{s} (x,y) = (L_{\tilde o}\circ \alpha)_+^\prime(x)$.    The slope field $\frak{s}$ has discontinuities at points of $\widetilde F(C(\tilde p))$ but is smooth in the complement of   $\widetilde F(C(\tilde p))$.
 The integral curves of $\frak{s}$ away from the cut points are the images under $\widetilde F$ of the geodesics emanating from $\tilde p$.   

A formula for $\frak{s}$ is easily computed for a 2--sphere of constant curvature.

\begin{proposition}\label{p:kappasphere}
Let $\widetilde M_\kappa$ be the 2-sphere of constant curvature $\kappa$ with base point $\tilde o$, and
let $\tilde p \in \widetilde M$ with $d(\tilde p, \tilde o) = r_0$.  Then the reference space $\widetilde F(\widetilde M_\kappa)$ is the rectangle
$$ \left\{ (x,y) : r_0 \leq x+ y \leq \frac{2 \pi}{\sqrt\kappa} - r_0, - r_0 \leq y-x \leq r_0 \right\}, $$
  and the slope field has the formula
\begin{equation} \label{e:kssf}
\frak{s}(x,y) =   \frac{ \cos(\sqrt\kappa r_0)-\cos(\sqrt\kappa y)\cos(\sqrt\kappa x)}{\sin({\sqrt\kappa x})\sin(\sqrt\kappa y)}. \end{equation}
\end{proposition}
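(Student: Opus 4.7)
The plan is to treat the two claims of the proposition separately. First I would identify the image $\widetilde F(\widetilde M_\kappa)$ as the stated rectangle, then compute $\frak{s}(x,y)$ by combining the first variation of arclength with the spherical law of cosines.

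For the rectangle, the bounds $r_0 \leq x + y$ and $-r_0 \leq y - x \leq r_0$ were already recorded at the start of Section 2 from the ordinary triangle inequality, so the only new inequality to verify is the upper bound $x + y \leq 2\pi/\sqrt\kappa - r_0$, which is special to the sphere. For this, let $\tilde p^\ast$ and $\tilde o^\ast$ denote the antipodes of $\tilde p$ and $\tilde o$; then $d(\tilde p^\ast, \tilde o^\ast) = r_0$, while $d(\tilde p^\ast, \tilde q) = \pi/\sqrt\kappa - x$ and $d(\tilde o^\ast, \tilde q) = \pi/\sqrt\kappa - y$, and the triangle inequality applied to the triple $\tilde p^\ast, \tilde o^\ast, \tilde q$ gives exactly the desired bound. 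Conversely, any $(x_0, y_0)$ in the rectangle makes $(x_0, y_0, r_0)$ a valid side-length triple for a spherical triangle (the usual triangle inequalities hold and the perimeter is at most $2\pi/\sqrt\kappa$), and after fixing two of its vertices at $\tilde o, \tilde p$ in $\widetilde M_\kappa$ the third vertex can be placed in $\widetilde M^+$, exhibiting $(x_0, y_0)$ as a value of $\widetilde F$.

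For the slope field formula, I would begin by observing that the cut locus of $\tilde p$ in $\widetilde M_\kappa$ is the single antipodal point $\tilde p^\ast$, whose polar coordinates are $(\pi/\sqrt\kappa - r_0, \pi)$; this lies on the meridian opposite $\tilde p$ and therefore not in $int(\widetilde M^+)$. Hence every $\tilde q \in int(\widetilde M^+)$ is a non-cut point, so the minimizing geodesic $\tilde\sigma$ from $\tilde p$ through $\tilde q$ is unique and smooth near $x = d(\tilde p, \tilde q)$. By the first variation of arclength,
$$\frak{s}(x, y) \,=\, (L_{\tilde o} \circ \tilde\sigma)'(x) \,=\, \bigl\langle \tilde\sigma'(x), \nabla L_{\tilde o}|_{\tilde q}\bigr\rangle \,=\, \cos(\measuredangle \tilde q),$$
where $\measuredangle \tilde q$ denotes the interior angle at $\tilde q$ of the triangle $\triangle \tilde o \tilde p \tilde q$. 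The spherical law of cosines for this triangle,
$$\cos(\sqrt\kappa\, r_0) \,=\, \cos(\sqrt\kappa\, x)\cos(\sqrt\kappa\, y) + \sin(\sqrt\kappa\, x)\sin(\sqrt\kappa\, y)\cos(\measuredangle \tilde q),$$
then yields the stated formula for $\frak{s}(x, y)$ upon solving for $\cos(\measuredangle \tilde q)$.

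Both parts are essentially routine; the one place where signs need to be chased is the first variation step, where one should remember that $\nabla L_{\tilde o}|_{\tilde q}$ is the unit radial vector pointing away from $\tilde o$, so that $\langle \tilde\sigma'(x), \nabla L_{\tilde o}|_{\tilde q}\rangle$ really does recover the cosine of the interior triangle angle at $\tilde q$ rather than of its supplement. No cut-locus complications intervene anywhere on $int(\widetilde M^+)$, precisely because the cut locus of $\tilde p$ on the round sphere is so small.
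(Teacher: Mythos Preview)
Your proof is correct and takes a genuinely different route from the paper's. The paper does not address the rectangle claim at all in its proof; it focuses solely on the slope field formula. There, the paper proceeds analytically: it writes down the explicit formula for a geodesic emanating from $\tilde p$ in polar coordinates about $\tilde o$ (with parameter the initial radial speed $\dot r_0$), observes that its image under $\widetilde F$ is the curve $\cos(\sqrt\kappa y)=\cos(\sqrt\kappa x)\cos(\sqrt\kappa r_0)-\dot r_0\sin(\sqrt\kappa x)\sin(\sqrt\kappa r_0)$, differentiates implicitly to get $dy/dx$, and then eliminates $\dot r_0$ between the two equations to obtain the closed form for $\frak{s}(x,y)$. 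Your argument is more geometric: you identify $\frak{s}(x,y)$ directly with $\cos(\measuredangle\tilde q)$ via first variation, and then read off that cosine from the spherical law of cosines. The two approaches buy different things. The paper's method is self-contained (it does not invoke the spherical law of cosines as a known fact) and makes the integral curves of $\frak{s}$ visible as a one-parameter family indexed by $\dot r_0$, which is useful for drawing the nullcline pictures that follow. Your method is shorter, handles the rectangle claim cleanly via the antipodal triangle inequality, and makes transparent why no cut-locus branch of the definition of $\frak{s}$ ever comes into play on the round sphere.
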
 

\begin{proof}
In polar coordinates about $\tilde o$, the unit speed geodesic $(r(t), \theta(t))$ with initial conditions $r(0)= r_0$ and $r^\prime(0) = \dot r_0$ satisfies:
$$ \cos(\sqrt\kappa r(t)) = \cos(\sqrt\kappa t)\cos(\sqrt\kappa r_0) - \dot r_0 \sin(\sqrt\kappa t)\sin(\sqrt\kappa r_0).$$
Since $\widetilde F(r(t),\theta(t) ) = (t,r(t))= (x,y)$, the image of this geodesic in the reference space is the solution curve of the equation
\begin{equation}\label{e:gks1}
\cos(\sqrt\kappa y) = \cos(\sqrt\kappa x)\cos(\sqrt\kappa r_0) - \dot r_0 \sin(\sqrt\kappa x)\sin(\sqrt\kappa r_0).
\end{equation}
Differentiating equation (\ref{e:gks1}) implicitly  gives
\begin{equation}\label{e:gks2}
\frak{s}(x,y) = \frac{dy}{dx} =  \frac{ \sin(\sqrt\kappa x)\cos(\sqrt\kappa r_0) + \dot r_0 \cos(\sqrt\kappa x)\sin(\sqrt\kappa r_0)}{\sin(\sqrt\kappa y)}.
\end{equation}
To obtain (\ref{e:kssf}),
solve for $\dot r_0$ in (\ref{e:gks1}),  substitute the result into (\ref{e:gks2}) and simplify.
\end{proof}

Following equation  (\ref{e:kssf}), Figure \ref{f:2a}  presents qualitative pictures of the slope field  $\frak s$ for  the 2--sphere of constant curvature $\kappa$  for different values of $r_0$ .  
\begin{figure}[htbp]
\setlength{\unitlength}{1 cm}
\begin{picture}(6,6)(-1,-1)
{\color{gray}
\put(0,0){\line(1,0){5}}
\put(0,0){\line(0,1){5}}
\qbezier(0,2)(1.123764312,2)(1.561882156,1.561882156)
\qbezier(2,0)(2,1,123764312)(1.561882156,1.561882156)
\qbezier(5,3)(3.876235688,3)(3.438117844,3.438117844)
\qbezier(3,5)(3,3.876235688)(3.438117844,3.438117844)}
\thicklines
\put(0,2){\line(1,-1){2}}
\put(0,2){\line(1,1){3}}
\put(3,5){\line(1,-1){2}}
\put(2,0){\line(1,1){3}}
\put(-0.5,1.8){$r_0$}
\put(1.8,-.5){$r_0$}
\put(0.9,1.2){\footnotesize$\mathfrak{s}<0$}
\put(2.4,2.4){\footnotesize$\mathfrak{s}>0$}
\put(3.5,3.5){\footnotesize$\mathfrak{s}<0$}
\put(-.6,5){\footnotesize$\frac {\pi} {\sqrt{\kappa}}$}
\put(4.8,-.5){\footnotesize$\frac {\pi} {\sqrt{\kappa}}$}
\end{picture}
\begin{picture}(6,6)(-1,-1)
{\color{gray}
\put(0,0){\line(1,0){5}}
\put(0,0){\line(0,1){5}}
\put(0,2.5){\line(1,0){5}}
\put(2.5,0){\line(0,1){5}}}
\thicklines
\put(0,2.5){\line(1,-1){2.5}}
\put(0,2.5){\line(1,1){2.5}}
\put(2.5,5){\line(1,-1){2.5}}
\put(2.5,0){\line(1,1){2.5}}
\put(-0.5,2.5){$r_0$}
\put(2.5,-.5){$r_0$}
\put(1.5,1.5){\footnotesize$\mathfrak{s}<0$}
\put(1.5,3.3){\footnotesize$\mathfrak{s}>0$}
\put(2.7,3.3){\footnotesize$\mathfrak{s}<0$}
\put(2.7,1.5){\footnotesize$\mathfrak{s}>0$}
\put(-.6,5){\footnotesize$\frac {\pi} {\sqrt{\kappa}}$}
\put(4.8,-.5){\footnotesize$\frac {\pi} {\sqrt{\kappa}}$}
\end{picture}
\begin{picture}(6,6)(-1,-1)
{\color{gray}
\put(0,0){\line(1,0){5}}
\put(0,0){\line(0,1){5}}
\qbezier(0,3)(1.123764312,3)(1.561882156,3.438117844)
\qbezier(2,5)(2,3.876235688)(1.561882156,3.438117844)
\qbezier(3,0)(3,1.123764312)(3.438117844,1.561882156)
\qbezier(5,2)(3.876235688,2)(3.438117844,1.561882156)}
\thicklines
\put(0,3){\line(1,-1){3}}
\put(0,3){\line(1,1){2}}
\put(2,5){\line(1,-1){3}}
\put(3,0){\line(1,1){2}}
\put(-0.5,2.8){$r_0$}
\put(2.8,-.5){$r_0$}
\put(0.9,3.7){\footnotesize$\mathfrak{s}>0$}
\put(2.4,2.4){\footnotesize$\mathfrak{s}<0$}
\put(3.5,1.4){\footnotesize$\mathfrak{s}>0$}
\put(-.6,5){\footnotesize$\frac {\pi} {\sqrt{\kappa}}$}
\put(4.8,-.5){\footnotesize$\frac {\pi} {\sqrt{\kappa}}$}
\end{picture}
\caption{The sign of the slope field  $\mathfrak{s}$ and its  nullclines $\mathfrak{s}= 0$ in
 $\widetilde F(\widetilde M^+_\kappa) $ for $ 0 < r_0< \frac \pi {2\sqrt{\kappa}}$ (upper left), $r_0 =    \frac \pi {2\sqrt{\kappa}}$ (upper right), and $ \frac \pi {2\sqrt{\kappa}}< r_0 <  \frac \pi {\sqrt{\kappa}}$ (lower center). }
\label{f:2a}
\end{figure}
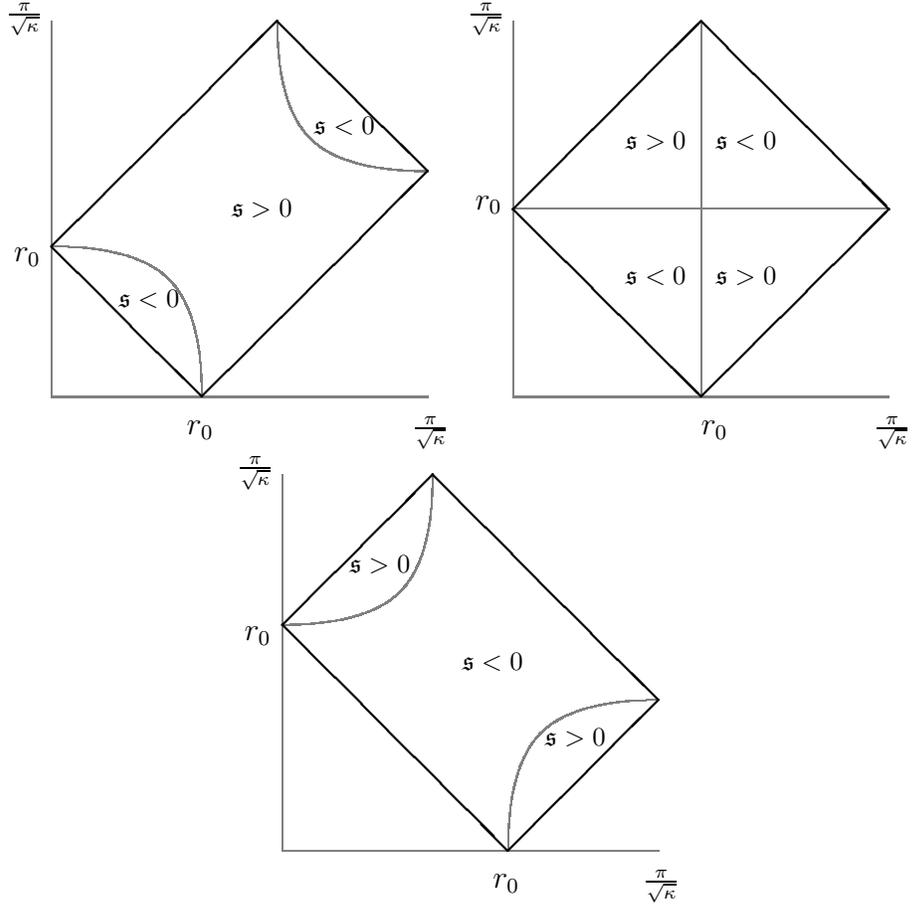

\begin{definition}\label{d:varsigmaphi}
 Let $\tilde\sigma_\phi$ be the geodesic emanating from $\tilde p$ making an angle $\phi$ with the meridian $\mu_0$ through $\tilde p$. Specifically, $\phi  \in [0,\pi]$ is the angle between  $\tilde \sigma^\prime_\phi(0)$ and $ - \mu_0^\prime(\tilde p)$.
 Suppose that $\tilde\sigma_\phi$ meets the cut locus $C(\tilde p)$ at parameter distance $\tau_\phi$.   If $\tilde\sigma_\phi(\tau_\phi)$ is a cut point in the trunk of $C(\tilde p)$, define $\varsigma_\phi (t) = \tilde\sigma_\phi(t)$ for $ 0 \leq t \leq \tau_\phi$, while if $\tilde\sigma_\phi(\tau_\phi)$ is on a branch of $C(\tilde p)$, let $\alpha$ be the arc in the cut locus joining $\tilde\sigma_\phi(\tau_\phi)$ to the trunk, and define $\varsigma_\phi = \tilde\sigma_{\phi} \cdot \alpha $, that is, the concatenation of $\tilde\sigma_\phi$ with $\alpha$ which is equal to $\tilde\sigma_\phi(t)$ for $0\leq t \leq \tau_\phi$ and to $\alpha(t)$ for $t \geq \tau_\phi$.
\end{definition}

 By construction  $L_{\tilde o}\circ \varsigma_\phi$ is a solution  of the slope field $\frak{s}$ in the sense that its right--hand derivative satisfies
$$ (L_{\tilde o}\circ \varsigma_\phi)^\prime_+ (t) = \frak{s}(\widetilde F\circ \varsigma_\phi(t)).$$

\begin{lemma}\label{l:ineq}
Suppose $f$, which is continuous and has a finite right--hand derivative for all $t$, satisfies the differential inequality
\begin{equation}\label{e:fplus}
 f^\prime_+(t) \leq \frak{s}(t, f(t)) 
 \end{equation}
 Fix $\phi_0 \in (0,\pi)$, and let $g(t) = L_{\tilde o}(\varsigma_{\phi_0}(t))$ so that $ g^\prime_+(t) = \frak{s} (t, g(t))$.  If for some $t_0 >0$, $f(t_0) \leq g(t_0)$, then $f(t) \leq g(t)$ for $t > t_0$. In other words, the graph of $f$ cannot cross the graph of $g$ from below to above.
\end{lemma}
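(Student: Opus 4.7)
The plan is a proof by contradiction, using a strict perturbation within the one-parameter family $\{\varsigma_\phi\}_{\phi \in (0,\pi)}$ of integral-curve extensions in $\widetilde M$.

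First I would suppose the conclusion fails, so that $f(t_1) > g(t_1)$ for some $t_1 > t_0$, and set $T := \inf\{t \in (t_0,t_1] : f(t) > g(t)\}$. Continuity of $f$ and $g$ yields $T < t_1$, $f(T) = g(T)$, and a sequence $t_n \downarrow T$ with $f(t_n) > g(t_n)$. Setting $h := f - g$, the sequence forces $h'_+(T) \geq 0$, while the hypothesis $f'_+(T) \leq \mathfrak{s}(T, f(T))$, the equation $g'_+(T) = \mathfrak{s}(T, g(T))$, and the equality $f(T) = g(T)$ together give $h'_+(T) \leq 0$. Thus $h'_+(T) = 0$, which is not yet a contradiction: one must rule out the borderline case in which $f$ is tangent to $g$ from below at $T$.

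To supply the missing input, introduce the strict upper perturbation $g_\epsilon(t) := L_{\tilde o}(\varsigma_{\phi_0+\epsilon}(t))$ for small $\epsilon > 0$. The basic fact that the images $\widetilde F(\varsigma_\phi)$ foliate $\widetilde F(\widetilde M^+)$ monotonically in $\phi$ (so that $\phi \mapsto g_\phi(t) := L_{\tilde o}(\varsigma_\phi(t))$ is strictly increasing for each admissible $t > 0$) gives $g_\epsilon(t) > g(t)$ strictly, and, in particular, a strict initial gap $f(t_0) \leq g(t_0) < g_\epsilon(t_0)$. It then suffices to prove $f \leq g_\epsilon$ on $[t_0,\infty)$ for each small $\epsilon$, since sending $\epsilon \to 0^+$ yields $f \leq g$.

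For the strict-gap comparison, I would introduce the \emph{label} $\phi_f(t) \in (0,\pi)$ defined by the relation $g_{\phi_f(t)}(t) = f(t)$, which is well defined by the monotone foliation. The strict initial gap reads $\phi_f(t_0) < \phi_0 + \epsilon$, and it suffices to show that $\phi_f$ is nonincreasing in $t$. Geometrically, the subsolution inequality $f'_+(t) \leq \mathfrak{s}(t, f(t))$ says precisely that $f$ rises no faster than the integral curve of $\mathfrak{s}$ passing through $(t, f(t))$, which is exactly the statement that the label $\phi_f$ does not increase. The main obstacle is to make this monotonicity precise across times at which the integral curve through $(t, f(t))$ crosses $\widetilde F(C(\tilde p))$, where $\mathfrak{s}$ jumps and each $\varsigma_\phi$ transitions from its geodesic part to a cut-locus arc; this is where one must invoke the explicit description of $\mathfrak{s}$ at cut points given in Section~9 together with the tree structure of the cut locus recalled earlier in this section.
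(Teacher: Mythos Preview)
Your label-function idea is the right one and is essentially what the paper does, but two things go wrong in your execution.

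First, the claim that the images $\widetilde F(\varsigma_\phi)$ foliate $\widetilde F(\widetilde M^+)$ and that $\phi \mapsto g_\phi(t)$ is \emph{strictly} increasing is false. Once $\tilde\sigma_\phi$ has reached the cut locus, $\varsigma_\phi$ follows the arc toward the trunk, and distinct values of $\phi$ whose geodesics feed into the same branch of $C(\tilde p)$ produce identical curves from that point on; for such $t$ the map $\phi \mapsto g_\phi(t)$ is constant on a whole interval, so $\phi_f(t)$ is not well defined. The paper avoids this by using instead the angle $\Phi(x,y)$ of the \emph{minimizing} geodesic from $\tilde p$ to $\widetilde F^{-1}(x,y)$, which is smooth but only defined on the complement of $\widetilde F(C(\tilde p))$. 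There the subsolution inequality immediately gives that $t \mapsto \Phi(t,f(t))$ is nonincreasing on any interval where $(t,f(t))$ avoids the cut locus, so $f$ cannot cross any geodesic level curve of $\Phi$ from below to above.

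Second, you correctly identify the cut-locus crossing as the crux but do not supply an argument; invoking Section~9 and the tree structure is not enough. Here is the paper's move. Take $\bar t$ with $f(\bar t)=g(\bar t)$ and $f>g$ on $(\bar t,t_1)$. If $(\bar t,g(\bar t)) \notin \widetilde F(C(\tilde p))$, the monotonicity of $\Phi(t,f(t))$ near $\bar t$ already gives a contradiction. If $(\bar t,g(\bar t)) \in \widetilde F(C(\tilde p))$, pick any $t\in(\bar t,t_1)$, so $\varsigma_{\phi_0}(t)$ is a cut point, and let $\tilde\sigma^\uparrow$ be the \emph{uppermost} minimizing geodesic from $\tilde p$ to $\varsigma_{\phi_0}(t)$; its initial angle $\phi$ satisfies $\phi>\phi_0$. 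Then $L_{\tilde o}(\tilde\sigma^\uparrow(\bar t)) > g(\bar t)=f(\bar t)$ while $L_{\tilde o}(\tilde\sigma^\uparrow(t)) = g(t) < f(t)$, so the graph of $f$ crosses the level curve $\Phi=\phi$ from below to above on $(\bar t,t)$, contradicting the monotonicity already proved off the cut locus. Note in particular that the $\epsilon$-perturbation $g_\epsilon$ is unnecessary: once you have the ``labels cannot increase off the cut locus'' statement, the uppermost-geodesic trick handles the cut-locus case directly without any limiting argument.
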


\begin{proof}
Let $(x,y)$ be a point in $\widetilde F(\widetilde M^+)$ which is not in the image of $C(\tilde p)$. Define $\Phi(x,y)$ to be the angle $\phi$ which the minimizing geodesic joining $\tilde p$ to $\tilde q$ makes with the meridian from $\tilde p$ to $\tilde o$, where $\tilde q$ is the unique point in $\widetilde M^+$ such that $\widetilde F(\tilde q) = (x,y)$.  Thus $\Phi$ is $C^\infty$ in the complement of $\widetilde F(C(\tilde p))$.  By construction the level curves of $\Phi$ are the images under $\widetilde F$ of  minimizing geodesics emanating from $\tilde p$.  Thus at all points $(t, f(t))$ not in $\widetilde F(C(\tilde p))$, we have  that the right--hand derivative of $\Phi(t,f(t))$ is nonpositive because, by (\ref{e:fplus}),  $f_+^\prime(t)$ is less than or equal to the slope of the level curve of $\Phi$ passing through $(t,f(t))$.   Thus in those intervals where $(t,f(t))$  does not meet $\widetilde F(C(\tilde p))$, $\Phi(t,f(t))$ is nonincreasing. In particular, the graph of $f$ cannot cross any of the level curves of $\Phi$ from a lower to a higher value.

To prove the Lemma, suppose there exists a $ t _1> t_0$ with $f(t_1) > g(t_1)$. By continuity of $f$ and $g$ there exists a $\bar t$ with $t_0 \leq \bar t$ such the $g(\bar t) = f(\bar t)$ and $g(t) < f(t)$ for all $ \bar t < t < t_1$.  If $(\bar t, g(\bar t)) = \widetilde F(\varsigma_{\phi_0}(\bar t))$ is not in $\widetilde F(C(\tilde p))$, we would have by the previous paragraph that $\Phi(t,f(t))$ was nonincreasing in an interval about $\bar t$, and at the same time $\Phi(t,f(t)) > \Phi(\bar t, g(\bar t)) = \phi_0$ for $t > \bar t$.  Thus $(\bar t, g(\bar t)) \in \widetilde F(C(\tilde p))$. Pick a $t > \bar t$.
Then $\varsigma_{\phi_0}(t)= \widetilde F^{-1}(t,g(t))$  lies on the arc $\alpha$ in $C(\tilde p)$ starting at $\varsigma_{\phi_0}(\bar t)$. 
 Let $\tilde q \in C(\tilde p)$ be the point such that $\widetilde F (\tilde q) = ( t, g( t))$ and let $\tilde \sigma^\uparrow$ be the uppermost minimizing geodesic joining $\tilde p$ to $\tilde q$.  Thus the angle $\phi$ that $\tilde \sigma^\uparrow$ makes with the meridian through $\tilde p$ satisfies $\phi>\phi_0$. 
 Thus $L_{\tilde o}(\tilde \sigma^\uparrow(\bar t)) > g(\bar t) = f(\bar t)$ and $L_{\tilde o}(\tilde \sigma^\uparrow(t)) = g(t) < f(t)$.  Therefore the graph of $f$ must cross the image of $\tilde \sigma^\uparrow$ at some $ \bar t < s < t$, that is, it crosses a level curve of $\Phi$ from a lower to a higher value, contradicting  the earlier assertion.
 \end{proof}

Sufficient conditions under which a given geodesic triangle $\triangle opq$ has a corresponding Alexandrov triangle $\triangle\tilde o\tilde p\tilde q$ in $\widetilde M$  can be described in terms of the reference space and slope field $\frak{s}$  for $\widetilde M$.

\begin{proposition}
Given $\triangle opq$ in $M$, suppose $F(q) \in \widetilde F(\widetilde M)$.
Let $\sigma$ be the minimizing geodesic joining $p$ to $q$.  Every geodesic triangle $\triangle op\sigma(t)$ for $t \in (0, d(p,q)]$, has an Alexandrov triangle  $\triangle\tilde o\tilde p\tilde q_t$ in 
$\widetilde M$ if and only if  $\sigma$ satisfies the differential inequality.
$$( L_o\circ \sigma)^\prime_+(t) \leq \frak{s} (F(\sigma(t)))$$
for all $ 0 < t <d(p,q)$.
\end{proposition}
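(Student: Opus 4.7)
The strategy is to prove each direction of the equivalence by exploiting the ordering of the level curves of the angle function $\Phi$ in the reference space. Throughout, write $h(s) = L_o(\sigma(s))$, so that $F(\sigma(s)) = (s, h(s))$ and the image curve $F(\sigma)$ is the graph of $h$.

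\textbf{Sufficiency.} Assume $h'_+(s) \leq \frak{s}(s,h(s))$ on $(0, d(p,q))$ and fix $t \in (0, d(p,q)]$. Since $F(q) \in \widetilde F(\widetilde M)$, Lemma \ref{l:rectangle} places $F(\sigma(t))$ in $\widetilde F(\widetilde M^+)$, so we may take $\tilde q_t$ to be its preimage under $\widetilde F$ and let $\tilde\sigma_t$ be any minimizing geodesic from $\tilde p$ to $\tilde q_t$ with initial angle $\phi$; property (1) is then immediate. The function $g(s) = L_{\tilde o}(\varsigma_\phi(s))$ is a solution of the slope field with $g(t) = h(t)$, and $\tilde\sigma_t$ agrees with $\varsigma_\phi$ on $[0,t]$. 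Applying Lemma \ref{l:ineq} at each $s \in [0,t]$ to the level curve of $\Phi$ through $(s, h(s))$ shows that $\Phi(s,h(s))$ is nonincreasing in $s$, so $\Phi(s,h(s)) \geq \Phi(t,h(t)) = \phi$ for every $s \leq t$. Since $y \mapsto \Phi(s,y)$ is monotone increasing in $y$, this yields $h(s) \geq g(s) = L_{\tilde o}(\tilde\sigma_t(s))$, which is convexity (2); property (3) then follows by Remark 1.2.

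\textbf{Necessity.} Fix $t_0 \in (0, d(p,q))$ and, for each $t > t_0$ close to $t_0$, let $\tilde\sigma_t$ be the side of the Alexandrov triangle at parameter $t$. Convexity at $s = t_0$ together with $L_{\tilde o}(\tilde\sigma_t(t)) = h(t)$ yields
$$ \frac{h(t) - h(t_0)}{t - t_0} \;\leq\; \frac{1}{t - t_0} \int_{t_0}^t (L_{\tilde o} \circ \tilde\sigma_t)'(s)\, ds. $$
Letting $t \to t_0^+$ along a sequence whose initial angles $\phi_t$ converge to some $\phi^*$, the limiting geodesic $\tilde\sigma^*$ is a minimizer from $\tilde p$ to $\tilde q_{t_0}$ and the right-hand side limits to $(L_{\tilde o} \circ \tilde\sigma^*)'(t_0)$. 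If $\tilde q_{t_0} \notin C(\tilde p)$, the minimizer is unique and this equals $\frak{s}(t_0, h(t_0))$ by the definition of the slope field, giving the bound immediately.

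\textbf{Main obstacle.} The subtle case is $\tilde q_{t_0} \in C(\tilde p)$, since then $\frak{s}$ is defined via the cut-locus arc $\alpha$ rather than a specific geodesic, and the convergence of $\tilde\sigma_t$ must be identified. Here we first rule out $\phi_t > \phi^\uparrow$: the monotonicity of level curves at $s = t_0$ would give $L_{\tilde o}(\tilde\sigma_t(t_0)) > L_{\tilde o}(\tilde\sigma^\uparrow(t_0)) = h(t_0)$, contradicting convexity. Thus $(t, h(t))$ cannot lie strictly above $\widetilde F(\alpha)$ near $t_0^+$. On any right-neighborhood where $(t,h(t))$ lies on $\widetilde F(\alpha)$, $h$ locally coincides with $L_{\tilde o}\circ\alpha$ and $h'_+(t_0) = \frak{s}(t_0, h(t_0))$ directly. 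On any right-neighborhood where $(t, h(t))$ lies strictly below $\widetilde F(\alpha)$, necessarily $\phi_t \leq \phi^\downarrow$, whence $\tilde\sigma^* = \tilde\sigma^\downarrow$, and the explicit formula of Lemma \ref{l:formula} for the slope field at cut points supplies the inequality $(L_{\tilde o}\circ\tilde\sigma^\downarrow)'(t_0) \leq (L_{\tilde o}\circ\alpha)'_+(t_0) = \frak{s}(t_0,h(t_0))$, completing the argument.
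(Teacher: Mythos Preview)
Your necessity argument at a cut point has the key inequality backwards. You write that Lemma~\ref{l:formula} supplies $(L_{\tilde o}\circ\tilde\sigma^\downarrow)'(t_0) \leq (L_{\tilde o}\circ\alpha)'_+(t_0)$, but the lemma states precisely the opposite:
\[
(L_{\tilde o}\circ\tilde\sigma^\downarrow)'(t_0) \;\geq\; (L_{\tilde o}\circ\alpha)'_+(t_0) \;\geq\; (L_{\tilde o}\circ\tilde\sigma^\uparrow)'(t_0).
\]
So when $(t,h(t))$ approaches $\tilde q_{t_0}\in C(\tilde p)$ from below, your limiting bound $h'_+(t_0)\le (L_{\tilde o}\circ\tilde\sigma^\downarrow)'(t_0)$ sits on the \emph{wrong} side of $\frak{s}(t_0,h(t_0))$ and does not yield the differential inequality. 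The paper avoids this trap entirely by arguing by contradiction: if $h'_+(t_0) > \frak{s}(F(\sigma(t_0)))$, then for $t$ slightly greater than $t_0$ the point $(t,h(t))$ lies strictly above $\widetilde F(\varsigma_{\phi_0})$ (which for $t>t_0$ is the cut-locus arc $\alpha$), forcing the lowermost angle $\phi_t$ to exceed $\phi_0$, and hence $L_{\tilde o}(\varsigma_{\phi_t}(t_0)) > L_{\tilde o}(\varsigma_{\phi_0}(t_0)) = h(t_0)$, violating Alexandrov convexity at $s=t_0$. This contradiction route never needs to compare $(L_{\tilde o}\circ\tilde\sigma^\downarrow)'$ with $\frak{s}$.

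Your sufficiency argument is also incomplete, though less fatally so. You invoke the monotonicity of $\Phi(s,h(s))$, but $\Phi$ is defined only off $\widetilde F(C(\tilde p))$, and you do not explain what happens when the graph of $h$ meets that set. The paper again proceeds by contradiction: if convexity fails at some $(\bar t, t_0)$ with $t_0<\bar t$, then the angle $\phi_{t_0}$ of the lowermost geodesic through $\tilde q_{t_0}$ is strictly less than $\phi_{\bar t}$, so $L_{\tilde o}(\varsigma_{\phi_{t_0}}(\bar t)) < h(\bar t)$; but Lemma~\ref{l:ineq} applied with $f=h$ and $g=L_{\tilde o}\circ\varsigma_{\phi_{t_0}}$ (they agree at $t_0$) forces $h(\bar t)\le L_{\tilde o}(\varsigma_{\phi_{t_0}}(\bar t))$, a contradiction. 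This use of Lemma~\ref{l:ineq} is global and handles the cut locus automatically, whereas your pointwise monotonicity statement for $\Phi$ does not.
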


\begin{proof}
By Lemma \ref{l:rectangle}, $ F(\sigma) \subset \widetilde F(\widetilde M)$.  Thus every $\triangle op\sigma(t)$ has a corresponding  $\triangle\tilde o\tilde p\tilde q_t$ in 
$\widetilde M$ where we take the lowermost minimizing geodesic joining $\tilde p$ to $\tilde q_t$ for that side.

First assume that   for every  $t$   the  corresponding  $\triangle\tilde o\tilde p\tilde q_t$ in 
$\widetilde M$ is an Alexandrov triangle corresponding to $\triangle op\sigma(t)$.  Suppose the differential inequality is not satisfied for some $ t_0 \in (0,d(p,q))$.  Let $\phi_0$  be chosen so that  $\varsigma_{\phi_0}$ restricted to $[0,t_0]$ is the lowermost minimizing geodesic joining $\tilde p$ to $\tilde q_{t_0}$, that is, the side of the corresponding Alexandrov triangle $\triangle\tilde o\tilde p\tilde q_{t_0}$.  Since we are supposing that 
$$( L_o\circ \sigma)^\prime_+(t_0) > \frak{s} (F(\sigma(t_0))),$$
it follows that there exists $\epsilon > 0$ such that  
$L_o(\sigma(t)) > L_{\tilde o}(\varsigma_{\phi_0}(t))$ for all  $t \in ( t_0, t_0 + \epsilon)$.  
For any such $t$, let $\phi_t$ be chosen so that $\varsigma_{\phi_t}$ restricted to $[0,t]$ is the
lowermost minimizing geodesic joining $\tilde p$ to $\tilde q_t$. Thus for such $t$,  $L_{\tilde o}(\varsigma_{\phi_t}(t)) = L_o(\sigma(t)) > L_{\tilde o}(\varsigma_{\phi_0}(t))$. Hence we have  $\phi_t > \phi_0$, and therefore  $L_{\tilde o}(\varsigma_{\phi_t}(t_0))  > L_{\tilde o}(\varsigma_{\phi_0}(t_0)) =  L_o(\sigma(t_0))$ which contradicts Alexandrov convexity for $\triangle op\sigma(t)$
and $\triangle\tilde o\tilde p\tilde q_t$. Therefore the differential inequality is satisfied for all $t$.

Conversely assume that the differential inequality is satisfied for all $ 0 < t <d(p,q)$.  
We must show that for any $t$, $L_o(\sigma(s)) \geq L_{\tilde o}(\varsigma_{\phi_t}(s))$ for all $ 0 \leq s \leq t$.  If this were not so, there would exist a $\bar t$ and a $t_0$ with $ 0< t_0 < \bar t$ such that
$$ L_{\tilde o}(\varsigma_{\phi_{t_0}}(t_0)) = L_o(\sigma(t_0)) < L_{\tilde o}(\varsigma_{\phi_{\bar t}}(t_0)).$$ 
Hence $\phi_{t_0} < \phi_{\bar t}$ and therefore $  L_{\tilde o}(\varsigma_{\phi_{t_0}}(\bar t))  < L_{\tilde o}(\varsigma_{\phi_{\bar t}}(\bar t))= L_o(\sigma(\bar t))$.  
On the other hand, applying Lemma \ref{l:ineq} with  $f(t) = L_o(\sigma(t))$ leads to  the contradiction
 $L_o(\sigma(\bar t)) \leq  L_{\tilde o}(\varsigma_{\phi_{t_0}}(\bar t)) $. 

\end{proof}

\section{Weaker Radial Attraction}

\subsection{Definition and Equivalences}

We introduced the notion of weaker radial attraction in \cite{HI}.  One may be regard it as an assumption comparing small hinges. In this section we investigate several consequences of this condition.

\begin{definition}
The  model surface $(\widetilde M, \tilde o)$ is said to have  {\it weaker radial attraction} than  the pointed complete Riemannian manifold $(M,o)$, if,
for any unit speed geodesics $\sigma, \tilde \sigma$ in $M, \widetilde M$ respectively satisfying
$L_o\circ\sigma (0) =  L_{\tilde o}\circ\tilde\sigma (0)< \ell = \sup_{\tilde p \in \widetilde M} d(\tilde o,\tilde p)$ and $(L_o\circ\sigma)^\prime_+ (0) =  (L_{\tilde o}\circ\tilde\sigma)^\prime_+ (0)$, then there exists an $\epsilon > 0$ such that $L_o\circ\sigma (t) \leq  L_{\tilde o}\circ\tilde\sigma (t)$ for all $ 0 \leq t < \epsilon$.   Here $L_o$ and $L_{\tilde o}$ are the distance functions from $o$ and $\tilde o$ respectively.
\end{definition}

\begin{remark}
(i)  As pointed out in   \cite[Remark 4.2]{HI}, if the radial curvature of $M$ is  bounded from below by the curvature of  $\widetilde M$, then $\widetilde M$ has weaker radial attraction than $M$ but not conversely.  (ii)  The necessity of  weaker radial attraction  in Theorem \ref{t:main} was proved in \cite[Proposition 4.13]{HI}.
\end{remark}

The following theorem proved in \cite{HI} asserts that the condition of weaker radial attraction is equivalent to two other conditions.

\begin{theorem}[Theorem 5.3 \cite{HI}] \label{t:equiv}
The following are equivalent:
\begin{enumerate}
\item The Hessian of $L_{\tilde o}$ dominates the Hessian of $L_o$.
\item The principal curvatures of the geodesic spheres about $o$ are bounded from below by the curvature of the geodesic circles about $\tilde o$ of the same radius. 
\item $\widetilde M$ has weaker radial attraction than $M$.
\end{enumerate}
\end{theorem}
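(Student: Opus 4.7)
All three conditions concern the second-order behavior of $L_o$ and $L_{\tilde o}$ along geodesics emanating from a point at a given radius, and the proof proceeds by making this relation explicit in turn.

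\emph{Step 1: $(1)\Leftrightarrow(2)$.} At a point $q\in M$ at distance $r>0$ from $o$ off the cut locus, $\nabla L_o = \partial_r$ is of unit length, so $\mathrm{Hess}(L_o)(\partial_r,\cdot)=0$, and on the tangent space $T_qS_r(o)$ the Hessian of $L_o$ agrees (up to the chosen sign convention for the outward normal) with the second fundamental form of the geodesic sphere $S_r(o)$. The same calculation in the rotationally symmetric model gives $\mathrm{Hess}(L_{\tilde o})(\tilde e,\tilde e) = y'(r)/y(r)$ on a unit vector $\tilde e$ tangent to the geodesic circle of radius $r$. Hence the inequality $\mathrm{Hess}(L_{\tilde o})\succeq \mathrm{Hess}(L_o)$ compared at matching radii is exactly the bound on principal curvatures of $S_r(o)$ by the curvature of the geodesic circle about $\tilde o$ of the same radius.

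\emph{Step 2: $(3)\Rightarrow(1)$.} Fix $q\in M$ at distance $r<\ell$ from $o$ and a unit vector $v\in T_qM$, decomposed as $v = \cos\alpha\,\partial_r + \sin\alpha\,e$ with $e\perp\partial_r$. Let $\sigma$ be the unit-speed geodesic with $\sigma'(0)=v$, and choose $\tilde q,\tilde v$ in $\widetilde M$ with matching radial distance $r$, matching radial component $\cos\alpha$, and (by unit speed) matching tangential magnitude $\sin\alpha$. Setting $u=L_o\circ\sigma$ and $\tilde u=L_{\tilde o}\circ\tilde\sigma$, one has $u(0)=\tilde u(0)$, $u'(0)=\tilde u'(0)$, and
\[
u''(0) = \mathrm{Hess}(L_o)(v,v) = \sin^2\!\alpha\,\mathrm{Hess}(L_o)(e,e),
\]
with the analogous identity for $\tilde u''(0)$. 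Weaker radial attraction gives $u\le \tilde u$ on a right neighborhood of $0$, so $h=\tilde u-u$ satisfies $h(0)=h'(0)=0$ and $h\ge 0$ near $0$; Taylor expansion forces $h''(0)\ge 0$. Since $v$ was arbitrary, this yields the Hessian comparison at $q$, and since $q$ was arbitrary at any admissible radius, $\mathrm{Hess}(L_{\tilde o})\succeq\mathrm{Hess}(L_o)$.

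\emph{Step 3: $(1)\Rightarrow(3)$.} This is the substantive direction. The difficulty is that $h''(0)\ge 0$ alone does not yield $h\ge 0$ on a neighborhood; one must use that the Hessian bound holds at every point of every sphere, not just at $t=0$. The plan is to exploit the Riccati equation satisfied by the shape operator of the geodesic spheres $S_r(o)$ along $\sigma$, and to convert the pointwise Hessian bound into a scalar differential inequality $u''(t) \le F(u(t),u'(t))$ that is saturated by $\tilde u$ (the model is two-dimensional, so $\tilde u$ obeys a sharp scalar ODE depending only on the current radius). Standard ODE comparison then gives $u\le\tilde u$ on a small right neighborhood of $0$.

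\emph{Main obstacle.} The genuinely analytic step is $(1)\Rightarrow(3)$, and within it the case of equality in the Hessian at the initial point: strict dominance at $t=0$ forces $h''(0)>0$ and the Taylor argument closes immediately, but handling equality requires either an ODE/Riccati comparison run along the entire pair of geodesics or a perturbation by slightly larger model curvatures followed by passage to the limit. Carrying this out cleanly, while avoiding the cut loci of $o$ and $\tilde o$ during the comparison, is where the work in \cite{HI} is concentrated.
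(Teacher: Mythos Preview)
The paper does not prove Theorem~\ref{t:equiv}; it is quoted verbatim from \cite{HI} (as the attribution ``Theorem 5.3 \cite{HI}'' indicates), and the surrounding text says only ``The following theorem proved in \cite{HI} asserts that \ldots''. So there is no proof here to compare against, and your proposal must be judged on its own.

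Your Steps 1 and 2 are correct and complete. For $(1)\Leftrightarrow(2)$ the identification of $\mathrm{Hess}(L_o)$ restricted to $T_qS_r(o)$ with the second fundamental form, together with the model computation $\mathrm{Hess}(L_{\tilde o})=\tfrac{y'}{y}(g-dL_{\tilde o}^2)$, is exactly right. For $(3)\Rightarrow(1)$ your Taylor argument is sound: $h=\tilde u-u$ is $C^2$ off the cut locus, $h(0)=h'(0)=0$, $h\ge 0$ on $[0,\epsilon)$, hence $h''(0)\ge 0$; since this holds for every unit $v$ at every admissible $q$, the Hessian comparison follows.

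Your Step 3 is an outline, not a proof, and you say so yourself. The scalar reduction you indicate is the right one: writing $\cos\alpha(t)=u'(t)$, the Hessian bound gives
\[
u''(t)=\mathrm{Hess}(L_o)(\sigma',\sigma')\le \bigl(1-u'(t)^2\bigr)\,\frac{y'(u(t))}{y(u(t))},
\]
while in the two-dimensional model $\tilde u$ satisfies the same relation with equality. This is a genuine second-order ODE comparison with identical initial data $u(0)=\tilde u(0)$, $u'(0)=\tilde u'(0)$, and the conclusion $u\le\tilde u$ on a right neighborhood does follow, but not from a one-line invocation of ``standard ODE comparison'': with equality in both initial conditions and possibly in the differential inequality at $t=0$, one needs either a perturbation (replace $\tilde u$ by the solution $\tilde u_\epsilon$ with $\tilde u_\epsilon'(0)=u'(0)+\epsilon$, obtain $u<\tilde u_\epsilon$ by strict comparison, then let $\epsilon\downarrow 0$) or a careful supersolution/subsolution argument. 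You have correctly identified this as the crux and named the two standard remedies; what is missing is actually executing one of them. If you want a self-contained proof, carry out the perturbation cleanly: it takes only a few lines once the scalar inequality above is written down, and it avoids any appeal to Riccati theory for the shape operator.
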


It will be convenient to reformulate condition (1) in terms of a certain tensor field $S$ in $M$.
Suppose the metric of the model surface  $\widetilde M$ takes the form
$$ d\tilde s^2 = dr^2 + y(r)^2d\theta^2,$$
in polar cordinates about $\tilde o$.
Let $g$ denote the Riemannian metric for $M$. We  will also  write $\langle-,-\rangle = g(-,-)$. In the open set $M\backslash(C(o) \cup\{o\})$ we can define the radial vector field $\xi = grad(L_o)$ and the symmetric $(2,0)$ tensor field
$$  S = \frac{y^\prime\circ L_o}{y\circ L_o} \left(g - dL_o\otimes dL_o\right) - \nabla^2L_o.$$
For vector fields $X$ and $Y$, we have
$$S(X,Y) =  \frac{y^\prime\circ L_o}{y\circ L_o}( \langle X,Y\rangle - \langle X,\xi\rangle\langle Y,\xi\rangle) - \langle \nabla_X\xi, Y\rangle . $$
The metrically equivalent symmetric operator $\widehat S$, that is, the $(1,1)$ tensor field, defined by $$\langle \widehat S(X),Y\rangle = S(X,Y),$$  is thus given by
$$ \widehat S(X) =    \frac{y^\prime\circ L_o}{y\circ L_o}(  X - \langle X,\xi\rangle\xi) -  \nabla_X\xi.$$
It is clear from these formulas that $\widehat S(\xi) =0$.

\begin{corollary}\label{c:equiv}
The model surface 
 $(\widetilde M,\tilde o)$  has weaker radial attraction than  $(M,o)$ if and only if
 $S(X,X) \geq 0$ for all $X$, or equivalently, the eigenvalues of $\widehat S$ are nonnegative at every point $p \in M\backslash (C(o)\cup \{o\})$ with $L_o(p) < \ell$. 
\end{corollary}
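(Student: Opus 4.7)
The plan is to reduce the corollary directly to Theorem~\ref{t:equiv} by recognizing the tensor $S$ as the pointwise difference between the two Hessians that appear in condition~(1) of that theorem. Since $S$ is symmetric with $S(X,X)=\langle \widehat S(X),X\rangle$, the equivalence between pointwise positive semidefiniteness of $S$ and nonnegativity of the eigenvalues of $\widehat S$ is a standard linear-algebra fact; the real content is identifying $S$ with a Hessian difference.

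I would first dispose of the radial direction. From $|\xi|^{2}\equiv 1$ one has $\langle \nabla_X\xi,\xi\rangle = \tfrac{1}{2}X(|\xi|^{2}) = 0$ for every $X$, and substituting this into the defining formula for $S(X,Y)$ gives $S(X,\xi)=0$. Hence $\widehat S(\xi)=0$, confirming that the radial direction is automatically an eigenvector with eigenvalue zero and imposes no constraint on the sign of $S$.

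On the orthogonal complement $\xi^{\perp}$, i.e.\ the tangent space to the geodesic sphere about $o$, the formula reduces to
\[
 S(X,X) \;=\; \frac{y'\!\circ L_o}{y\circ L_o}\,|X|^{2} \;-\; \nabla^{2}L_o(X,X).
\]
A direct computation in the polar coordinates $(r,\theta)$ on $\widetilde M$ shows that $\nabla^{2}L_{\tilde o}$ annihilates $\partial_r$ and acts on the unit circle-tangent vector $\partial_\theta/y$ at radius $r$ as multiplication by $y'(r)/y(r)$. Under the natural splitting that matches $\xi$ in $M$ with $\partial_r$ in $\widetilde M$ and $\xi^{\perp}$ with the sphere-tangent direction at the same radius, the first summand of $S$ is exactly the lift of $\nabla^{2}L_{\tilde o}$ to $M$, namely $\frac{y'\circ L_o}{y\circ L_o}\bigl(g - dL_o\otimes dL_o\bigr)$, and the second summand is $\nabla^{2}L_o$. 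Therefore $S$ represents the difference $\nabla^{2}L_{\tilde o} - \nabla^{2}L_o$ summand by summand, and $S(X,X)\geq 0$ for all $X$ is nothing but a verbatim restatement of condition~(1) in Theorem~\ref{t:equiv}.

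Invoking the equivalence (1)$\Leftrightarrow$(3) in that theorem then completes the proof. The only subtle point is bookkeeping: making precise the identification by which Hessians of distance functions in two different manifolds are compared at matching radii. This is not a genuine obstacle because the tensor $S$ was constructed precisely to encode this comparison, and the hypothesis $L_o(p)<\ell$ ensures that the model quantity $y'(L_o(p))/y(L_o(p))$ is defined wherever $S$ is considered.
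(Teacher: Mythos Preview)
Your proof is correct and follows essentially the same route as the paper: both identify $S(X,X)\geq 0$ with the Hessian-domination condition (1) of Theorem~\ref{t:equiv} via the formula $\nabla^2 L_{\tilde o} = \frac{y'}{y}(d\tilde s^2 - dr^2)$, and then invoke that theorem. The paper simply cites a reference for this Hessian formula and states the identification in one sentence, whereas you spell out the radial/tangential decomposition and the polar-coordinate computation explicitly.
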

\begin{proof}
Since $\nabla^2L_o$ is the Hessian of $L_o$ and $\frac {y^\prime}{y}( d\tilde s^2 - dr^2)$ is the Hessian of $L_{\tilde o}$ \cite[Proposition 2.20]{GW}, $S(X,X) \geq 0$ is exactly the condition that the Hessian of $L_{\tilde o}$ dominates the Hessian of $L_o$. The result follows from Theorem \ref{t:equiv}.
\end{proof}  

\begin{remark}
We will prove in Corollary  \ref{c:rauch1} that $L_o(p) \leq \ell$ for all $p \in M$. Thus $S$ is indeed defined in $M\backslash (C(o)\cup \{o\})$ and not just in $\{p:L_o(p) < \ell \} \cap M\backslash (C(o)\cup \{o\})$.
\end{remark}

\subsection{Geodesic comparison}

\begin{proposition}\label{p:geodcomp}
Suppose that $(\widetilde M, \tilde o)$ has weaker radial attraction than $(M,o)$. Suppose that $\sigma: [a,b] \rightarrow M$ and $\tilde\sigma: [a,b]\rightarrow \widetilde M$ are unit speed geodesics such that $ L_{\tilde o}\circ \tilde\sigma(t) \leq L_o\circ \sigma(t)$ for all $t \in [a,b]$.  If any one of the following three conditions hold:
\begin{enumerate}
\item $(L_o\circ \sigma)(a) = (L_{\tilde o}\circ \tilde\sigma)(a)$ and $(L_o\circ \sigma)^\prime_+(a) = (L_{\tilde o}\circ \tilde\sigma)^\prime_+(a)$.
\item $(L_o\circ \sigma)(b) = (L_{\tilde o}\circ \tilde\sigma)(b)$ and $(L_o\circ \sigma)^\prime_-(b) = (L_{\tilde o}\circ \tilde\sigma)^\prime_-(b)$.
\item There exists $t_0 \in (a,b)$ with $(L_o\circ \sigma)(t_0) = (L_{\tilde o}\circ \tilde\sigma)(t_0)$.
\end{enumerate}
then  $ (L_o\circ \sigma)(t) = (L_{\tilde o}\circ \tilde\sigma)(t)$ for all $t \in [a,b]$.
\end{proposition}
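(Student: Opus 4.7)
The plan is to prove case (1) in detail and reduce cases (2) and (3) to it. Write $f = L_o\circ \sigma$ and $\tilde f = L_{\tilde o}\circ\tilde\sigma$, so the standing hypothesis reads $\tilde f \leq f$ on $[a,b]$ and the goal is $f \equiv \tilde f$.

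For case (1), set $T := \sup\{t \in [a,b] : f \equiv \tilde f \text{ on } [a,t]\}$. The initial data $f(a) = \tilde f(a)$ and $f'_+(a) = \tilde f'_+(a)$ are exactly the hypotheses of weaker radial attraction, which delivers some $\epsilon > 0$ with $f \leq \tilde f$ on $[a,a+\epsilon)$; combined with $\tilde f \leq f$ this gives $f \equiv \tilde f$ there, so $T > a$. Continuity yields $f \equiv \tilde f$ on $[a,T]$ and in particular $f'_-(T) = \tilde f'_-(T)$. To force $T = b$ I verify the hypotheses of weaker radial attraction again at $T$ and extend the equality past $T$, contradicting maximality. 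The crux is the matching $f'_+(T) = \tilde f'_+(T)$: the inequality $f'_+(T) \geq \tilde f'_+(T)$ is immediate from $\tilde f \leq f$ with $\tilde f(T) = f(T)$. For the reverse inequality I argue by contradiction. If $f'_+(T) > \tilde f'_+(T)$, pick a unit-speed geodesic $\tilde\sigma_*$ in $\widetilde M$ issuing from $\tilde\sigma(T)$ with $(L_{\tilde o}\circ \tilde\sigma_*)'_+(T) = f'_+(T)$. Applying weaker radial attraction to $(\sigma, \tilde\sigma_*)$ at $T$ yields $f \leq L_{\tilde o}\circ \tilde\sigma_*$ on some $[T, T+\epsilon')$. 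On the other hand, in $\widetilde M$ the two distinct geodesics $\tilde\sigma$ and $\tilde\sigma_*$ issue from a common point with strictly different initial radial slopes, so a direct computation gives $L_{\tilde o}\circ \tilde\sigma_* < \tilde f$ just to the left of $T$; combining this with $f \equiv \tilde f$ on $[a,T]$ and weaker radial attraction applied to the time-reversed pair at $T$ (which uses the already-established $f'_-(T) = \tilde f'_-(T)$) produces $f < \tilde f$ immediately to the left of $T$, contradicting $\tilde f \leq f$.

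Cases (2) and (3) then reduce to (1). For (2), reverse the parameter via $s \mapsto a+b-s$: this preserves $\tilde f \leq f$ and converts the left-derivative condition at $b$ into the right-derivative condition of (1) at $a$. For (3), split $[a,b]$ at $t_0$: since $t_0$ is an interior minimum of $h := f - \tilde f \geq 0$, one has $h'_-(t_0) \leq 0 \leq h'_+(t_0)$, and the auxiliary-geodesic contradiction of case (1), applied on each side of $t_0$, upgrades these to $f'_\pm(t_0) = \tilde f'_\pm(t_0)$, placing $[t_0, b]$ under case (1) and $[a, t_0]$ under case (2).

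The main obstacle I anticipate is the handling of points $T$ (or $t_0$) where $\sigma(T) \in C(o)$ or $\tilde\sigma(T) \in C(\tilde o)$, at which the distance-along-geodesic functions genuinely satisfy $f'_- > f'_+$ or $\tilde f'_- > \tilde f'_+$. There the auxiliary $\tilde\sigma_*$ may itself produce $L_{\tilde o}\circ \tilde\sigma_*$ with a corner at $T$, making the Taylor comparison against $\tilde f$ delicate. To manage this I plan to invoke the equivalent Hessian form of weaker radial attraction from Corollary~\ref{c:equiv}: pointwise on $M\setminus(C(o)\cup\{o\})$ it gives the classical differential inequality $f'' \leq (y'(f)/y(f))(1 - (f')^2)$ along $\sigma$, and the analogous equality holds along $\tilde\sigma$ away from $C(\tilde o)$. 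This recasts the problem as a uniqueness statement for a nonlinear second-order ODE with prescribed vanishing initial data on $h = f - \tilde f \geq 0$, which a Gronwall-type argument applied to the energy $E = (h')^2 + h^2$ dispatches on each smooth arc; the isolated cut times are absorbed by continuity, since they form a nowhere-dense set along each geodesic.
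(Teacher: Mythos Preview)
Your overall architecture (an open--closed/supremum argument to propagate the equality $f=\tilde f$) matches the paper's, and the reductions among (1), (2), (3) are fine, though the paper organizes it oppositely: it reduces (1) and (2) to (3) and proves (3) by showing $E=\{t:f(t)=\tilde f(t)\}$ is open and closed.

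The genuine gap is in your derivative--matching step, and the fix you propose is the wrong one. To show $f'_+(T)\le \tilde f'_+(T)$ you introduce an auxiliary geodesic $\tilde\sigma_*$ with radial slope $f'_+(T)$ and try to reach a contradiction by applying weaker radial attraction to the \emph{time--reversed} pair $(\sigma,\tilde\sigma_*)$ at $T$. But that application requires the initial slopes to match, i.e.\ $-f'_-(T)=-(L_{\tilde o}\circ\tilde\sigma_*)'_-(T)=-f'_+(T)$, which is exactly the equality $f'_-(T)=f'_+(T)$ you do not yet have; when $\sigma(T)\in C(o)$ it fails and your contradiction evaporates. Your proposed rescue via a second--order ODE and Gronwall on $E=(h')^2+h^2$ needs $f$ to be $C^2$ on each ``arc'', and the claim that the cut times along $\sigma$ are isolated (or even nowhere dense) and can be absorbed by continuity is not justified for an arbitrary minimizing geodesic.

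What you are missing is a one--line inequality that makes all of this unnecessary: for \emph{any} unit--speed geodesic $\sigma$ in $M$ and any $t$, one has $(L_o\circ\sigma)'_-(t)\ge (L_o\circ\sigma)'_+(t)$ (this is \cite[Corollary~2.3]{HI}; it comes from the first variation formula, since the one--sided derivatives are the max and min, respectively, of $-\cos\angle(\sigma'(t),\gamma')$ over minimizing $\gamma$ from $o$ to $\sigma(t)$). Since $\widetilde M$ is a model surface, $\tilde f=L_{\tilde o}\circ\tilde\sigma$ is genuinely differentiable, so at any interior point $t_0$ with $f(t_0)=\tilde f(t_0)$ the standing inequality $\tilde f\le f$ gives $\tilde f'(t_0)\ge f'_-(t_0)$ (from the left) and $f'_+(t_0)\ge \tilde f'(t_0)$ (from the right). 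Chaining,
\[
\tilde f'(t_0)\ \ge\ f'_-(t_0)\ \ge\ f'_+(t_0)\ \ge\ \tilde f'(t_0),
\]
so all four are equal. Now weaker radial attraction applied at $t_0$ in both time directions yields $f\le\tilde f$ on a two--sided neighborhood of $t_0$, hence $f\equiv\tilde f$ there. This handles cut points automatically (indeed, it is precisely at cut points that the middle inequality does work), and replaces your $\tilde\sigma_*$ construction and the ODE argument entirely.
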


\begin{proof} 
If (1) holds, then by hypothesis and by weaker radial attraction there exists an $\epsilon > 0$ such that
$$ 
 (L_{\tilde o}\circ \tilde\sigma)(t) \leq ( L_o\circ \sigma)(t) \leq (L_{\tilde o}\circ \tilde\sigma)(t)
$$
for all $ a \leq t < a+\epsilon$.  Hence  $(L_{\tilde o}\circ \tilde\sigma)(t) = ( L_o\circ \sigma)(t)$ for  $ a \leq t < a+\epsilon$ and thus (3) holds.  Similarly if (2) holds so does (3).  We may now assume that (3) holds.
  Let $E = \{ t \in (a,b) : ( L_o\circ \sigma)(t) = (L_{\tilde o}\circ \tilde\sigma)(t)\}$. By assumption $E$ is nonempty. By continuity $E$ is closed.  It is also open because if $t_0 \in E$, then, using \cite[Corollary 2.3]{HI} for the middle inequality,
 $$ (L_{\tilde o}\circ \tilde\sigma)^\prime(t_0 ) \geq (L_o\circ \sigma)^\prime_-(t_0) \geq (L_o\circ \sigma)^\prime_+(t_0) \geq  (L_{\tilde o}\circ \tilde\sigma)^\prime(t_0 ).$$
Thus we have equality holding at all places.  By weaker radial attraction, there exists an 
$\epsilon > 0$ such that  $( L_o\circ \sigma)(t) \leq (L_{\tilde o}\circ \tilde\sigma)(t)$ for all $t \in (t_0-\epsilon, t_0+\epsilon)$. Combining this with the assumption,  $( L_o\circ \sigma)(t) = (L_{\tilde o}\circ \tilde\sigma)(t)$ for all $t \in (t_0-\epsilon, t_0+\epsilon)$. This shows that $E$ is open.  Thus by connectivity, $E = (a,b)$.  By continuity this equality extends to the endpoints as well.
 \end{proof}

\begin{lemma} 
Assume that $(\widetilde M, \tilde o)$ has weaker radial attraction than $(M,o)$. Let $q \in M\backslash( C(o)\cup\{o\}) $ with $L_o(q) = r < \ell$,  and let $X \in T_qM$ be linearly independent of the radial vector $\xi_q$ at $q$. If $\widehat S(X) =0$ then the sectional curvature $K(X \wedge \xi) = -\frac {y^{\prime\prime}(r)}{y(r)} $.  This is the Gaussan curvature of $\widetilde M$ at distance $r$ from $\tilde o$.
\end{lemma}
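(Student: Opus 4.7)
The plan is to derive a Riccati equation along the radial geodesic through $q$ for the restriction of $\widehat S$ to the normal subspace of $\xi$, and to extract rigidity from $\widehat S(X) = 0$ by viewing it as the minimum of a nonnegative quantity. First I would reduce to the case $X \perp \xi_q$ with $|X| = 1$: writing $X = X^\perp + c\,\xi_q$, the identity $\widehat S(\xi) = 0$ gives $\widehat S(X^\perp) = 0$; since $X$ and $\xi_q$ are linearly independent, $X^\perp \neq 0$; and since the sectional curvature depends only on the plane spanned, $K(X\wedge\xi_q) = K(X^\perp\wedge\xi_q)$. After rescaling I may replace $X$ by $X^\perp/|X^\perp|$.

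Next I would let $\sigma_o$ be the unit-speed minimizing radial geodesic with $\sigma_o(r) = q$. Because $q \notin C(o)$ and $r < \ell$, $\sigma_o(t)$ stays outside $C(o)$ with $L_o(\sigma_o(t)) = t$ for $t$ in an open interval about $r$, so $\xi$ and $\widehat S$ are smooth there. On the subspace normal to $\sigma_o'(t)$ define the shape operator of the distance sphere by $A(t)Y = \nabla_Y \xi$; then $\widehat S$ restricts to $B(t) = \bar a(t)\,I - A(t)$ with $\bar a(t) = y'(t)/y(t)$. The Jacobi equation translates into the operator Riccati equation $A' + A^2 + R = 0$, where $R(Y) = R(Y,\sigma_o')\sigma_o'$ and the prime denotes covariant differentiation along $\sigma_o$. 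Since $\bar a$ satisfies the scalar Riccati identity $\bar a' + \bar a^2 = y''/y$, combining the two yields
$$B' = \frac{y''(t)}{y(t)}\,I - 2\bar a(t)\,B + B^{2} + R.$$

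Now I would parallel-transport $X$ along $\sigma_o$ to a unit normal field $X(t)$ and set $b(t) = \langle B(t)X(t),X(t)\rangle$. By Corollary~\ref{c:equiv}, $b \geq 0$ near $r$, and by hypothesis $b(r) = 0$, so $b$ is minimized at $t = r$ and $b'(r) = 0$. Parallelism of $X(t)$ makes the derivative purely tensorial: $b'(r) = \langle B'(r)X,X\rangle$. The assumption $\widehat S(X) = 0$ gives $B(r)X = 0$, so the $B$ and $B^{2}$ terms on the right vanish at $t = r$, leaving
$$0 = b'(r) = \frac{y''(r)}{y(r)} + K(X\wedge\xi),$$
which is the required identity.

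The main obstacle I anticipate is bookkeeping: correctly identifying the paper's Hessian-based $\widehat S$ with the combination $\bar a\,I - A$ on the normal subspace (so that the standard Riccati equation for the shape operator applies), and verifying smoothness of $\widehat S$ along $\sigma_o$ in a full two-sided neighborhood of $r$ using $q\notin C(o)$ and $r<\ell$. Once that is in place, what remains is the standard equality case of a Riccati comparison.
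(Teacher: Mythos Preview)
Your proof is correct and follows essentially the same idea as the paper's: both observe that $S \ge 0$ along the radial geodesic through $q$ with equality at $X$, so the radial derivative of $t \mapsto S(\,\cdot\,,\,\cdot\,)$ evaluated on the chosen extension of $X$ vanishes at $t=r$, and both then compute that derivative to extract the curvature identity. The only cosmetic difference is that the paper extends $X$ to the Jacobi field $J$ with $J(0)=0$, $J(r)=X$ and differentiates $f(t)=S(J(t),J(t))$ directly, whereas you parallel-transport $X$ and package the same computation via the operator Riccati equation for $B=\widehat S|_{\xi^\perp}$; either way one arrives at $0 = y''(r)/y(r) + K(X\wedge\xi)$.
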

\begin{proof}
We may assume $X$ is a unit vector perpendicular to $\xi$ at $q$.  Since $\widehat S(X) = 0$, it follows that $ \nabla_X\xi = \frac {y^\prime(r)}{y(r)}X$.
  Let $\gamma(t)$ for $0\leq t \leq r$ be the minimizing geodesic joining $o$ to $q$, and let $J(t)$ be the Jacobi field along $\gamma$ satisfying $J(0)=0$ and $J(r)=X$.  The function $f(t) = S(J(t),J(t)) \geq 0$ for all  $0 < t < r+\epsilon$ for some positive $\epsilon$ and $ f(r) = 0$.  Hence $f$ attains its minimum at $t=r$. Therefore  
after a straightforward calculation, which uses $ \nabla_X\xi = \frac {y^\prime(r)}{y(r)}X$, we obtain
$$ 0 = f^\prime(r) = \frac {y^{\prime\prime}(r)}{y(r)}   + \langle R(X,\xi)\xi,X\rangle. $$ 
\end{proof}

Suppose that $\sigma$ is a geodesic in $M\backslash (C(o)\cup\{o\})$ and $\tilde \sigma$ a geodesic in $\widetilde M$ parameterized on the same interval such that $L_o\circ \sigma = L_{\tilde o}\circ \tilde \sigma$. Because 
$$\nabla^2L_o(\sigma^\prime, \sigma^\prime ) = (L_o\circ \sigma)^{\prime\prime} = (L_{\tilde o}\circ \tilde\sigma)^{\prime\prime} = \nabla^2L_{\tilde o}(\tilde\sigma^\prime, \tilde\sigma^\prime ),$$
it  follows that $\widehat S(\sigma^\prime) =0$ at all points along $\sigma$. The preceding lemma implies that for all $t$, $K(\sigma^\prime(t)\wedge \xi)$ equals the curvature of $\widetilde M$ at $\tilde\sigma(t)$.

\begin{proposition} \label{p:xiperp}
Assume the geodesic $\sigma$ in $M\backslash (C(o)\cup\{o\})$ satisfies $\widehat S(\sigma^\prime) =0$.  Define the vector field $\xi^\perp$ along $\sigma$ by 
$$ \xi^\perp = \xi - \langle \xi, \sigma^\prime\rangle \sigma^\prime.$$
Then
$$ \nabla _{\sigma^\prime} \xi^\perp =   - \frac {y^\prime\circ L_o\circ \sigma}{y\circ L_o\circ\sigma}\langle \sigma^\prime, \xi\rangle \xi^\perp. $$
\end{proposition}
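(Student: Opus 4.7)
The plan is to write $\nabla_{\sigma'} \xi^\perp$ as a direct computation, using only the definition of $\xi^\perp$, the geodesic equation $\nabla_{\sigma'}\sigma' = 0$, and the formula for $\nabla_{\sigma'}\xi$ that is forced by the hypothesis $\widehat S(\sigma') = 0$. There is no genuine obstacle here; the proof is algebraic bookkeeping organized so that the terms collapse into a scalar multiple of $\xi^\perp$.

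First I would unpack the hypothesis. The definition
$$\widehat S(X) = \frac{y'\circ L_o}{y\circ L_o}\bigl(X - \langle X,\xi\rangle\xi\bigr) - \nabla_X\xi$$
applied to $\sigma'$ and set equal to zero yields the identity
$$\nabla_{\sigma'}\xi = \frac{y'\circ L_o\circ\sigma}{y\circ L_o\circ\sigma}\bigl(\sigma' - \langle \sigma',\xi\rangle\xi\bigr).$$
This is the only input from the assumption on $\widehat S$.

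Next I would differentiate $\xi^\perp = \xi - \langle\xi,\sigma'\rangle\sigma'$ along $\sigma$. Since $\sigma$ is a geodesic, the term involving $\nabla_{\sigma'}\sigma'$ drops out, leaving
$$\nabla_{\sigma'}\xi^\perp = \nabla_{\sigma'}\xi - \sigma'(\langle\xi,\sigma'\rangle)\,\sigma'.$$
By metric compatibility and $\nabla_{\sigma'}\sigma'=0$, we have $\sigma'(\langle\xi,\sigma'\rangle) = \langle\nabla_{\sigma'}\xi,\sigma'\rangle$, which, using the identity from the previous paragraph together with $|\sigma'|^2 = 1$, evaluates to $\tfrac{y'\circ L_o\circ\sigma}{y\circ L_o\circ\sigma}\bigl(1 - \langle\sigma',\xi\rangle^2\bigr)$.

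Finally I would substitute both expressions into the formula for $\nabla_{\sigma'}\xi^\perp$ and factor. The $\sigma'$ terms combine to leave a coefficient $\langle\sigma',\xi\rangle^2\sigma' - \langle\sigma',\xi\rangle\xi$ in front of $\tfrac{y'\circ L_o\circ\sigma}{y\circ L_o\circ\sigma}$, and this factors as $-\langle\sigma',\xi\rangle\bigl(\xi - \langle\sigma',\xi\rangle\sigma'\bigr) = -\langle\sigma',\xi\rangle\,\xi^\perp$, giving the claimed identity. The essential cancellation is that the $y'/y$ coefficient times $\sigma'$ appears once from $\nabla_{\sigma'}\xi$ and once (with opposite sign, coming from the $1$ in $1-\langle\sigma',\xi\rangle^2$) from the $\sigma'(\langle\xi,\sigma'\rangle)\sigma'$ term, annihilating the purely tangential part and leaving only a multiple of $\xi^\perp$.
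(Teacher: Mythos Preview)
Your proof is correct and follows exactly the same approach as the paper: the paper's proof simply cites $\nabla_{\sigma'}\sigma'=0$ and the identity $\nabla_{\sigma'}\xi = \tfrac{y'\circ L_o\circ\sigma}{y\circ L_o\circ\sigma}(\sigma' - \langle\sigma',\xi\rangle\xi)$ coming from $\widehat S(\sigma')=0$, and calls the rest a straightforward calculation, which is precisely what you have written out in detail.
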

\begin{proof}
This is a straightforward calculation using  $\nabla_{\sigma^\prime}\sigma^\prime = 0$ and
$$ \nabla_{\sigma^\prime} \xi = \frac {y^\prime\circ L_o\circ \sigma}{y\circ L_o\circ\sigma}(\sigma^\prime - \langle \sigma^\prime,\xi\rangle\xi
)  $$
which holds because $\widehat S(\sigma^\prime) =0$.
\end{proof}    
    
 \begin{corollary}
 Under the same hypothesis, the normalized vector field $\frac {\xi^\perp}{|\xi^\perp|}$ is parallel along $\sigma$, that is, 
 $$ \nabla_{\sigma^\prime} \frac {\xi^\perp}{|\xi^\perp|} =0$$
 because the covariant derivative of $\xi^\perp$ along $\sigma$ is a multiple of itself.
 \end{corollary}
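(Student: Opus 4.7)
The plan is to derive the corollary as a direct consequence of Proposition \ref{p:xiperp} via the general principle that a vector field whose covariant derivative is a pointwise scalar multiple of itself has constant direction. Concretely, I would invoke the elementary observation: if $V$ is a vector field along $\sigma$ and $\nabla_{\sigma'} V = f V$ for some scalar function $f$ along $\sigma$ with $V$ nowhere zero, then $V/|V|$ is parallel.

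To justify this observation, first apply the Leibniz rule to $\hat V := V/|V|$, writing
$$\nabla_{\sigma'} \hat V = \frac{1}{|V|}\nabla_{\sigma'} V - \frac{(|V|)'}{|V|^2}\, V = \Bigl(f - (\log|V|)'\Bigr)\hat V.$$
Thus $\nabla_{\sigma'}\hat V$ is itself a scalar multiple of $\hat V$. On the other hand, since $\langle \hat V,\hat V\rangle \equiv 1$, differentiating gives $\langle \nabla_{\sigma'}\hat V,\hat V\rangle = 0$, so $\nabla_{\sigma'}\hat V$ is orthogonal to $\hat V$. A vector that is simultaneously a scalar multiple of $\hat V$ and orthogonal to $\hat V$ must vanish, hence $\nabla_{\sigma'}\hat V = 0$.

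Now apply this to $V = \xi^\perp$, which by Proposition \ref{p:xiperp} satisfies
$$\nabla_{\sigma'}\xi^\perp = -\frac{y'\circ L_o\circ\sigma}{y\circ L_o\circ\sigma}\langle\sigma',\xi\rangle\,\xi^\perp,$$
a scalar multiple of $\xi^\perp$ itself. One should of course check that $\xi^\perp$ is nowhere zero along $\sigma$ (equivalently, that $\sigma'$ is never radial, so that $\xi$ is linearly independent from $\sigma'$); this holds at points where $\sigma$ meets a radial direction transversally, and, more importantly, the statement is vacuous (or needs the obvious convention) where $\xi^\perp$ vanishes. Apart from this minor bookkeeping, there is no real obstacle—the work has all been done in Proposition \ref{p:xiperp}, and the corollary is essentially a direction‑versus‑magnitude decomposition. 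The only subtlety worth flagging is the unit‑norm orthogonality step, which is what forces the scalar coefficient to be zero rather than merely finite.
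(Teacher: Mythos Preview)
Your proposal is correct and is precisely the approach the paper takes: the paper's entire argument is the clause ``because the covariant derivative of $\xi^\perp$ along $\sigma$ is a multiple of itself,'' and you have simply supplied the standard Leibniz-rule-plus-orthogonality justification that makes this explicit. Your remark about $\xi^\perp$ being nowhere zero is a fair bookkeeping point that the paper leaves implicit.
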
   
    
   This shows that the 2--planes spanned by $\sigma^\prime$ and $\xi$ are parallel along $\sigma$. 
    
    \begin{corollary}\label{c:conjugate}
    Suppose the two geodesics $\sigma: [0,a] \to M\backslash (C(o)\cup \{o\})$  and  $\tilde\sigma : [0,a] \to \widetilde M$ satisfy
    $L_o\circ \sigma = L_{\tilde o}\circ \tilde \sigma$, and suppose that $\tilde\sigma(a)$ is the first conjugate point to $\tilde\sigma(0)$ along $\tilde\sigma$. Then $\sigma$ is not free of conjugate points, and thus cannot minimize past $a$.
    \end{corollary}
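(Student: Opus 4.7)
The plan is to use the Morse index theorem: lift the Jacobi field vanishing at the endpoints of $\tilde\sigma$ to a nontrivial variation $V$ of $\sigma$ with index form $I(V,V)=0$, which will force $\sigma$ to acquire a conjugate point to $\sigma(0)$ somewhere in $(0,a]$. Since $\widetilde M$ is two--dimensional, the hypothesis that $\tilde\sigma(a)$ is the first conjugate point to $\tilde\sigma(0)$ produces a nontrivial scalar function $f\colon[0,a]\to\mathbb{R}$ with $f(0)=f(a)=0$ satisfying
$$ f''(t)+\tilde K(t)\,f(t)=0, $$
where $\tilde K(t)=-y''(r(t))/y(r(t))$ and $r(t)=L_o\circ\sigma(t)=L_{\tilde o}\circ\tilde\sigma(t)$; the Jacobi field on $\tilde\sigma$ that vanishes at the endpoints has the form $f\,\tilde N$ for a parallel unit normal $\tilde N$. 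To transfer this to $M$, I would first recall that $L_o\circ\sigma=L_{\tilde o}\circ\tilde\sigma$ forces $\widehat S(\sigma')=0$ along $\sigma$, as noted just before Proposition~\ref{p:xiperp}. By uniqueness of geodesics, $\sigma$ is either everywhere radial or everywhere non--radial; in the latter (generic) case the corollary following Proposition~\ref{p:xiperp} supplies the parallel unit vector field $N=\xi^\perp/|\xi^\perp|$ along $\sigma$, perpendicular to $\sigma'$. I then set $V(t)=f(t)\,N(t)$, which vanishes at both endpoints.

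Because $N$ is parallel and the $2$--plane $N\wedge\sigma'$ coincides with $\xi\wedge\sigma'$, the lemma preceding Proposition~\ref{p:xiperp}, applied with $X=\sigma'$ (linearly independent of $\xi$ by non--radiality), gives $K(N\wedge\sigma')(t)=-y''(r(t))/y(r(t))=\tilde K(t)$. Consequently
$$ I(V,V)=\int_0^a\bigl(|\nabla_{\sigma'}V|^2-\langle R(V,\sigma')\sigma',V\rangle\bigr)\,dt=\int_0^a\bigl(f'(t)^2-\tilde K(t)\,f(t)^2\bigr)\,dt, $$
and an integration by parts using $f''+\tilde K f=0$ together with $f(0)=f(a)=0$ shows this integral equals $0$. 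Since $V$ is a nontrivial piecewise--smooth variation vanishing at the two endpoints with $I(V,V)=0$, the Morse index theorem precludes $\sigma$ from being free of conjugate points to $\sigma(0)$ in $(0,a]$: if there were none, the index form would be positive definite on such variations. Hence a conjugate point of $\sigma(0)$ along $\sigma$ lies in $(0,a]$, and the standard fact that a geodesic cannot minimize past a conjugate point yields the final assertion that $\sigma$ cannot minimize past $a$.

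The main subtlety I foresee is the curvature identity $K(N\wedge\sigma')=\tilde K$; it rests on aligning $N$ with $\xi^\perp$ and invoking both $\widehat S(\sigma')=0$ and the auxiliary curvature lemma, and also requires $L_o\circ\sigma<\ell$ so that the lemma applies pointwise. The exceptional case in which $\sigma$ is everywhere radial (so $\xi^\perp\equiv 0$) is not handled by this construction, since then $\widehat S(\sigma')=0$ is automatic and supplies no information on sectional curvatures transverse to $\sigma'$; such a $\sigma$ would need a separate, direct comparison of the radial Jacobi equations, but it does not arise in the intended applications of the corollary.
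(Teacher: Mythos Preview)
Your proof is correct and follows essentially the same route as the paper: both construct the test field $V=f\,\xi^\perp/|\xi^\perp|$ from the scalar Jacobi equation on $\tilde\sigma$, use that $\xi^\perp/|\xi^\perp|$ is parallel and that $K(\sigma'\wedge\xi)=-y''(r)/y(r)$ (from $\widehat S(\sigma')=0$), compute $I(V,V)=0$ via integration by parts, and conclude by the Morse Index Lemma. Your explicit mention of the radial case $\xi^\perp\equiv 0$ is a point the paper leaves implicit as well.
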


     \begin{proof}
     We assume that $\sigma$ is free of conjugate points.  By the  Morse Index Lemma \cite[Corollary 3.2, p. 74]{KN}, if $V$ is a vector field along $\sigma$ which is perpendicular to $\sigma$ and satisfies $V(0)=0$ and $V(a)=0$, then 
   $$ I(V) = \int_0^a \langle \nabla_{\sigma^\prime} V, \nabla_{\sigma^\prime} V\rangle - \langle R(V,\sigma^\prime)\sigma^\prime,V\rangle dt \geq 0$$
 with equality holding if and only if $V$ is identically zero.  We will construct a non--zero vector field $V$ whose Morse index is $0$ to obtain a contradiction.
     
 Let $\kappa(r) = -\frac {y^{\prime\prime}(r)}{y(r)}$. Since   $\tilde\sigma(a)$ is the first conjugate point to $\tilde\sigma(0)$ along $\tilde\sigma$, there exists a non--zero Jacobi field along $\tilde\sigma$ which vanishes at $0$ and $a$.  Thus there is a non-zero function $f$ vanishing at $0$ and $a$ satisfying
 $$ f^{\prime\prime} + (\kappa\circ L_{\tilde o} \circ \tilde\sigma )f  = 0.$$
    Set $V = f \frac {\xi^\perp}{|\xi^\perp|}$. Then since $\frac {\xi^\perp}{|\xi^\perp|}$ is parallel along $\sigma$ we have, using Proposition \ref{p:xiperp} and integration by parts,
   \begin{eqnarray*}
I(V)&=& \int_0^a (f^\prime(t))^2 - (\kappa\circ L_{\tilde o} \circ \tilde\sigma )f (t)^2 dt \\
&=&   f^\prime(t)f(t) \vert_0^a - \int_0^a ( f^{\prime\prime}(t) +(\kappa\circ L_{\tilde o} \circ \tilde\sigma (t))f (t))f(t) dt \\
&=& 0.
    \end{eqnarray*}
     \end{proof}

The proof of this corollary shows that more generally:
    
     \begin{proposition} Suppose that $\gamma$ is a geodesic in a Riemannian manifold and  that $P$ is a parallel unit vector field along $\gamma$.  Let $\kappa(t) = K(\gamma^\prime\wedge P(t))$ for $0\leq t \leq a$.  If there exists a nonzero solution $f(t)$ of $$ f^{\prime\prime} + \kappa f = 0, $$ such that $f(0) = f(a) = 0$, then $\gamma$ is not free of conjugate points on $[0,a]$.
    \end{proposition}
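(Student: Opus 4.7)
The proof is a direct generalization of the argument in Corollary \ref{c:conjugate}, with the vector field $\xi^{\perp}/|\xi^{\perp}|$ replaced by an arbitrary parallel unit vector field perpendicular to $\gamma'$. I would proceed as follows.

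First, since the sectional curvature $\kappa(t) = K(\gamma'\wedge P(t))$ is defined, $P(t)$ must be linearly independent of $\gamma'(t)$. Because both $P$ and $\gamma'$ are parallel along $\gamma$, the inner product $\langle P,\gamma'\rangle$ is a constant, call it $c$, with $|c|<1$. Then $Q = P - c\gamma'$ is parallel along $\gamma$, perpendicular to $\gamma'$, and has constant length $\sqrt{1-c^2}>0$, so
\[
\widetilde P \;=\; \frac{Q}{\sqrt{1-c^2}}
\]
is a parallel unit vector field along $\gamma$ perpendicular to $\gamma'$. Moreover $\gamma'\wedge P$ and $\gamma'\wedge \widetilde P$ span the same 2-plane, so $K(\gamma'\wedge \widetilde P(t)) = \kappa(t)$.

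Next, set $V(t) = f(t)\,\widetilde P(t)$. Then $V$ is perpendicular to $\gamma'$, $V(0) = V(a) = 0$, and $V$ is not identically zero since $f$ is not. Because $\widetilde P$ is parallel, $\nabla_{\gamma'} V = f'\widetilde P$, so $\langle \nabla_{\gamma'}V,\nabla_{\gamma'}V\rangle = (f')^2$. Using orthonormality of $\widetilde P$ and $\gamma'$, one also has $\langle R(V,\gamma')\gamma',V\rangle = \kappa f^2$. Integrating by parts and using the differential equation $f'' + \kappa f = 0$ together with $f(0)=f(a)=0$, the index becomes
\[
I(V) \;=\; \int_0^a \bigl[(f')^2 - \kappa f^2\bigr]\,dt \;=\; -\int_0^a \bigl(f'' + \kappa f\bigr)f\,dt \;=\; 0.
\]

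Finally, suppose for contradiction that $\gamma$ is free of conjugate points on $[0,a]$. By the Morse Index Lemma (\cite[Corollary 3.2, p.~74]{KN}), $I(W) \geq 0$ for every piecewise smooth vector field $W$ along $\gamma$ perpendicular to $\gamma'$ with $W(0) = W(a) = 0$, with equality if and only if $W$ is identically zero. Since $I(V) = 0$ but $V \not\equiv 0$, this is a contradiction. Therefore $\gamma$ must have a conjugate point on $[0,a]$.

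The only subtlety beyond the previous corollary is ensuring that the test vector field is genuinely perpendicular to $\gamma'$, which is handled by replacing $P$ with its normalized perpendicular component $\widetilde P$; this replacement is harmless because it does not alter the 2-plane whose sectional curvature defines $\kappa$. No new analytic obstacle appears.
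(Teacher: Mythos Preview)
Your proof is correct and follows exactly the approach the paper intends: the paper itself offers no separate argument for this proposition, merely remarking that the computation in Corollary~\ref{c:conjugate} proves the more general statement, and your write-up reproduces that computation with $\widetilde P$ in place of $\xi^\perp/|\xi^\perp|$. Your extra step of replacing $P$ by its normalized component $\widetilde P$ perpendicular to $\gamma'$ is a sensible precaution (the proposition does not assume $P\perp\gamma'$), and it changes nothing in the sectional curvature or the index computation.
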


\begin{remark} \label{r:conjugate}   
In  light of this proposition,  the conclusion of Corollary \ref{c:conjugate} is still valid if either one or both of the endpoints of $\sigma$ lie in $C(o)$, as long as the interior of $\sigma$ is disjoint from $C(o)\cup\{o\}$. This is because by continuity the parallel field  $\frac {\xi^\perp}{|\xi^\perp|}$ along the interior of $\sigma$  extends to a parallel field $P$ that satisfies
 $K(\sigma^\prime \wedge P)= \kappa\circ L_{\tilde o} \circ \tilde \sigma$ on the closed interval $[0,a]$.
\end{remark}

\subsection{Jacobi Fields}

We prove an analog of the Rauch Comparison Theorem for Jacobi fields   along a pair of geodesics in two Riemannian manifolds under an assumption on the Hessians of the distance functions, rather than under the usual assumption on the sectional  curvatures.

 \begin{theorem}\label{t:rauch}
 Let $M$ and $\bar M$ be two Riemannian manifolds.  Let $\gamma : [0,a] \rightarrow M$ and 
 $\bar\gamma : [0,a] \rightarrow \bar M$ be normal geodesics in $M$ and $\bar M$ respectively. Set $o = \gamma(0)$ and $\bar o = \bar\gamma(0)$.  Suppose that $\gamma(t)$ and $\bar\gamma(t)$ are not conjugate to $o$ and $\bar o$ respectively along $\gamma$ and $\bar\gamma$ respectively, and that the Hessians of $L_o$ and $L_{\bar o}$ satisfy $\nabla^2 L_o \leq \nabla^2L_{\bar o}$ at $\gamma(t)$ and $\bar\gamma(t)$ for every $0<t<a$.  If $J$ and $\bar J$ are Jacobi fields along $\gamma$ and $\bar \gamma$ respectively, that satisfy $J(0) =0$, $\bar J(0) = 0$, $| J^\prime(0)| = | \bar J^\prime (0)|$, and $\langle J^\prime(0) , \gamma^\prime(0) \rangle = \langle \bar J^\prime(0) , \bar\gamma^\prime(0) \rangle$, then
 \begin{equation*}
| J(t) | \leq |\bar J (t) |
\end{equation*}
for all $ 0\leq t \leq a$.  Moreover, if equality holds for some $t_0 \in (0,a)$, then equality holds for all $0 \leq t \leq t_0$, and if 
$$ \lim_{t \to a^-} \frac {| \bar J(t)|}{|J(t)|} = 1, $$
then equality holds for all $ 0 \leq t \leq a$.
 \end{theorem}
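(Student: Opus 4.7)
My plan is to prove the theorem by reducing to a monotonicity statement on the ratio $|\bar J(t)|/|J(t)|$, deriving the growth rates of $|J|$ and $|\bar J|$ from the Hessians of the distance functions.

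First I would reduce to the case of normal Jacobi fields. Decompose $J = f\gamma' + J^\perp$ along $\gamma$ with $J^\perp \perp \gamma'$. Since the Jacobi equation preserves this splitting (using $\langle R(J,\gamma')\gamma',\gamma'\rangle = 0$) and $J(0) = 0$, the tangential part is the linear function $f(t) = \langle J'(0),\gamma'(0)\rangle\, t$. The analogous decomposition for $\bar J$ produces the same $f$ and forces $|(J^\perp)'(0)| = |(\bar J^\perp)'(0)|$. Because $|J(t)|^2 - |\bar J(t)|^2 = |J^\perp(t)|^2 - |\bar J^\perp(t)|^2$, it suffices to prove the theorem in the normal case, so assume henceforth $J \perp \gamma'$ and $\bar J \perp \bar\gamma'$ with $|J'(0)| = |\bar J'(0)|$.

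The key identity is
\[
\frac{d}{dt}|J(t)|^2 \;=\; 2\langle J'(t), J(t)\rangle \;=\; 2\,\nabla^2 L_o\bigl(J(t),J(t)\bigr), \quad 0 < t < a,
\]
and its analogue for $\bar J$. The second equality follows because $J$ is the variation field of the radial family $\alpha(s,t) = \exp_o(t(\gamma'(0) + sJ'(0)))$: symmetry of the connection on $\alpha$ gives $\nabla_J \gamma' = \nabla_{\gamma'} J = J'$ along $\gamma$, and since $\xi = \nabla L_o$ coincides with $\gamma'$ along $\gamma$, $\nabla^2 L_o(J,J) = \langle \nabla_J \xi, J\rangle = \langle J',J\rangle$. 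Writing $u = |J|$ and $\bar u = |\bar J|$, both positive on $(0,a)$ by the no-conjugate-point hypothesis, I obtain
\[
\frac{u'}{u} = \nabla^2 L_o\!\left(\tfrac{J}{u}, \tfrac{J}{u}\right), \qquad \frac{\bar u'}{\bar u} = \nabla^2 L_{\bar o}\!\left(\tfrac{\bar J}{\bar u}, \tfrac{\bar J}{\bar u}\right).
\]

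The remaining work is to establish $u'/u \leq \bar u'/\bar u$, which makes $R(t) = \bar u(t)/u(t)$ nondecreasing on $(0,a)$; since $R(0^+) = |\bar J'(0)|/|J'(0)| = 1$, this yields $u \leq \bar u$. The equality clauses then follow at once from this monotonicity: an interior equality $u(t_0) = \bar u(t_0)$ gives $R(t_0) = 1$, which with $R$ nondecreasing and $R(0^+) = 1$ forces $R \equiv 1$ on $(0, t_0]$; and if $R(t) \to 1$ as $t \to a^-$, a nondecreasing function on $(0,a)$ bounded below by $1$ with limit $1$ at $a$ must be identically $1$. The main obstacle is the step $u'/u \leq \bar u'/\bar u$ itself: the Hessian hypothesis compares bilinear forms on different tangent spaces, while the Rayleigh quotients above involve the specific, a priori unrelated unit vectors $J/u$ and $\bar J/\bar u$. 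I would address this by fixing an initial isometry $\iota: T_o M \to T_{\bar o}\bar M$ sending $\gamma'(0), J'(0)$ to $\bar\gamma'(0), \bar J'(0)$, extending along $\gamma$ and $\bar\gamma$ by parallel transport, transporting $J$ to a non-Jacobi vector field $\widetilde J$ along $\bar\gamma$ with $|\widetilde J| = u$, and applying the Morse index-form inequality on $\bar\gamma$ to a suitable rescaling of $\widetilde J$ matched to $\bar J$ at a chosen endpoint. This index-form approach bypasses the directional mismatch between $\widetilde J$ and $\bar J$, and combined with the Hessian inequality produces the desired scalar growth bound $u(t) \leq \bar u(t)$.
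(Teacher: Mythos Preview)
Your setup coincides with the paper's: reduce to normal Jacobi fields, set $u=|J|$, $\bar u=|\bar J|$, derive the identity $u'/u=\nabla^2 L_o(J/u,J/u)$ (and its barred analogue), and aim to show the ratio $\bar u/u$ is nondecreasing with initial limit $1$. The equality clauses you draw from monotonicity are exactly what the paper does.

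The divergence is at the step you flag as ``the main obstacle.'' In the paper's reading, the hypothesis $\nabla^2 L_o\le\nabla^2 L_{\bar o}$ at $\gamma(t)$ and $\bar\gamma(t)$ means precisely that $\nabla^2 L_o(v,v)\le\nabla^2 L_{\bar o}(\bar v,\bar v)$ for \emph{every} unit vector $v\perp\gamma'(t)$ and \emph{every} unit vector $\bar v\perp\bar\gamma'(t)$; equivalently, the largest principal curvature of the distance sphere in $M$ at $\gamma(t)$ is at most the smallest one in $\bar M$ at $\bar\gamma(t)$. (This is how ``domination of Hessians'' is used throughout the paper; see the equivalence with the principal--curvature bound in the preceding theorem.) With that reading there is no directional mismatch: since $J(t_0)/u(t_0)$ and $\bar J(t_0)/\bar u(t_0)$ are unit vectors perpendicular to the respective geodesics, the hypothesis gives
\[
\frac{u'(t_0)}{u(t_0)}=\nabla^2 L_o\!\left(\tfrac{J(t_0)}{u(t_0)},\tfrac{J(t_0)}{u(t_0)}\right)\le\nabla^2 L_{\bar o}\!\left(\tfrac{\bar J(t_0)}{\bar u(t_0)},\tfrac{\bar J(t_0)}{\bar u(t_0)}\right)=\frac{\bar u'(t_0)}{\bar u(t_0)}
\]
in one line, and the proof is complete. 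Your proposed detour through a parallel isometry $\iota$ and the Morse index form is therefore unnecessary; it is also not clearly sound as stated, since the index--form comparison $I_{\bar\gamma}(\widetilde J)\le I_\gamma(J)$ requires a pointwise sectional--curvature inequality, which the Hessian hypothesis does not supply (a Hessian bound is an integrated Riccati condition, not a curvature bound). In short: you set everything up correctly, but misread the hypothesis as weaker than it is; once read correctly, the inequality you need is immediate.
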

 
 \begin{proof}
 We may suppose that $\langle J^\prime(0) , \gamma^\prime(0) \rangle = \langle \bar J^\prime(0) , \bar\gamma^\prime(0) \rangle= 0$ so that $J$ and $\bar J$ are perpendicular to $\gamma$ and $\bar \gamma$ respectively.  Let $f(t) = \langle J(t), J(t) \rangle$ and $\bar f (t) = \langle \bar J(t), \bar J(t) \rangle$.  We must show $f(t) \leq \bar f(t)$. By two applications of l'H\^opital's rule,
 \begin{equation}\label{e:lhospital}
\lim_{t\rightarrow 0^+} \frac {\bar f(t)}{f(t)}  = \lim_{t\rightarrow 0^+} \frac {\langle \bar J^\prime(t), \bar J(t)\rangle}{\langle J^\prime(t), J(t)\rangle} =
 \lim_{t\rightarrow 0^+} \frac {\langle \bar J^\prime(t), \bar J^\prime(t)\rangle +   \langle \bar J^{\prime\prime} (t), \bar J(t)\rangle } {\langle  J^\prime(t),  J^\prime(t)\rangle +   \langle J^{\prime\prime} (t), J(t)\rangle } = 1.
\end{equation}
It thus suffices to prove
\begin{equation}\label{e:derivative}
 \frac {d}{dt}\left(  \frac{\bar f(t)} {f(t)} \right) \geq 0
\end{equation}
or equivalently that
\begin{equation*}
\frac {f^\prime(t)}{f(t)} \leq \frac{\bar f^\prime(t)}{\bar f(t)}
\end{equation*}
for all $ t \in (0,a)$. (The no conjugacy condition implies that $f$ and $\bar f$ do not vanish anywhere in $(0,a)$.)  Fix $t_0 \in (0,a)$.  Set
\begin{equation*}
Y(t) = \frac {1} {\sqrt {f(t_0)} } J(t) \quad\mathrm{and}\quad \bar Y(t) = \frac {1} {\sqrt {\bar f(t_0)}} \bar J(t)
\end{equation*}
so that $Y$ and $\bar Y$ are Jacobi fields along $\gamma$ and $\bar\gamma$ which are perpendicular to the geodesics and have the same norm at $t_0$, that is, $|Y(t_0)| = |\bar Y(t_0)| = 1 $.
Therefore, by the assumption on the Hessians,
\begin{equation*}
\frac {f^\prime(t_0)}{f(t_0)} =2\nabla^2L_o(Y(t_0), Y(t_0)) \leq 2 \nabla^2L_{\bar o}(\bar Y(t_0), \bar Y(t_0)) 
=\frac {\bar f^\prime(t_0)}{\bar f(t_0)}. 
\end{equation*}
This proves the inequality.  In case equality holds at $t_0 \in (0,a)$, equations (\ref{e:lhospital}) and (\ref{e:derivative}) imply that
\begin{equation*}
  \frac{\bar f(t)} {f(t)} = 1
\end{equation*}
for all $ t \in (0,t_0)$ and thus $f(t) = \bar f(t)$ whenever $0 \leq t \leq t_0$.  Similarly  $f(t) = \bar f(t)$ for all $0 \leq t \leq a$ if $\lim_{t \to a^-} \frac {\bar f(t)}{f(t)} = 1$.
 \end{proof}

\begin{corollary}  \label{c:rauch1}
Assume the model surface $(\widetilde M, \tilde o)$ has weaker radial attraction than the complete pointed Riemannian manifold $(M,o)$.  If $\widetilde M$ has a finite radius $\ell$, then  $d (o,p) \leq \ell$ for all $p \in M$.
\end{corollary}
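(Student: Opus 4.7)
The plan is a proof by contradiction. Suppose $p \in M$ satisfies $d(o,p) > \ell$, and fix a unit speed minimizing geodesic $\gamma:[0,d(o,p)] \to M$ from $o$ to $p$. The strategy is to apply the Rauch-type comparison Theorem \ref{t:rauch} on the subinterval $[0,\ell]$ in order to force $\gamma(\ell)$ to be conjugate to $o$ along $\gamma$, which contradicts the fact that $\gamma$ minimizes on an interval strictly longer than $\ell$.

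For the model side, since $\widetilde M$ is complete and $\ell<\infty$, Hopf--Rinow shows that the closed ball of radius $\ell$ about $\tilde o$, which is all of $\widetilde M$, is compact. Thus $\widetilde M$ is a compact simply connected $2$-manifold, hence diffeomorphic to $S^{2}$ with a second pole at distance $\ell$ from $\tilde o$; in the polar form $dr^{2}+y(r)^{2}d\theta^{2}$ about $\tilde o$ one has $y(0)=y(\ell)=0$ and $y(t)>0$ for $t\in(0,\ell)$. Pick any unit speed radial geodesic $\tilde\gamma:[0,\ell]\to\widetilde M$ from $\tilde o$ and let $\tilde J$ be the perpendicular Jacobi field along $\tilde\gamma$ with $\tilde J(0)=0$ and $|\tilde J'(0)|=1$. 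The standard computation for surfaces of revolution gives $|\tilde J(t)|=y(t)$, so $\tilde\gamma$ has no conjugate point in $(0,\ell)$ and $\tilde J(\ell)=0$.

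Now pick a unit vector $e\in T_{o}M$ perpendicular to $\gamma'(0)$ and let $J$ be the Jacobi field along $\gamma$ with $J(0)=0$ and $J'(0)=e$. Because $\gamma$ is minimizing on an interval strictly longer than $\ell$, it has no conjugate points on $(0,\ell]$, and for each $t\in(0,\ell)$ we have $\gamma(t)\notin C(o)\cup\{o\}$ with $L_{o}(\gamma(t))=t<\ell$, so $\nabla^{2}L_{o}$ is defined at $\gamma(t)$. By Theorem \ref{t:equiv}, weaker radial attraction translates into the pointwise Hessian inequality $\nabla^{2}L_{o}\leq\nabla^{2}L_{\tilde o}$ along the two geodesics, so all the hypotheses of Theorem \ref{t:rauch} with $\bar M=\widetilde M$ are in place. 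Its conclusion is $|J(t)|\leq|\tilde J(t)|=y(t)$ for all $t\in[0,\ell]$. Setting $t=\ell$ gives $J(\ell)=0$, so $\gamma(\ell)$ is conjugate to $o$ along $\gamma$, contradicting the minimizing property of $\gamma$ on $(0,d(o,p))$.

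The most delicate point is the bookkeeping: one must orient the Hessian inequality correctly (the manifold with the larger Hessian plays the role of $\bar M$ in Theorem \ref{t:rauch}) and verify that both $\gamma$ and $\tilde\gamma$ are free of conjugate points on the open interval $(0,\ell)$ so that the Rauch-type estimate can be invoked up to and including $t=\ell$. Once this is arranged, the contradiction is immediate from $y(\ell)=0$.
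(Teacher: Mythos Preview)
Your proof is correct and follows essentially the same approach as the paper: assume a minimizing geodesic $\gamma$ from $o$ extends past distance $\ell$, compare Jacobi fields along $\gamma$ and a meridian $\tilde\gamma$ via Theorem~\ref{t:rauch} on $[0,\ell]$, and use $|\tilde J(\ell)|=y(\ell)=0$ to force a conjugate point at $\gamma(\ell)$, contradicting minimality. Your write-up is slightly more explicit than the paper's in justifying that $\nabla^2 L_o$ is defined along $\gamma|_{(0,\ell)}$ and that both geodesics are conjugate-free on $(0,\ell)$, but the argument is the same.
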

\begin{proof} 
If not, there exists
a geodesic $\gamma$ emanating from $o$ which is conjugate free on the interval $[0,\ell]$.  
Let $\tilde \gamma$  be a geodesic emanating from $\tilde o$.  Then  $\gamma(t)$ and $\tilde \gamma(t)$ are not conjugate to $o$ and $\tilde o$ respectively for all $ t \in (0,\ell)$.  Because of weaker radial attraction,  $\nabla^2 L_o \leq \nabla^2L_{\tilde o}$ at $\gamma(t)$ and $\tilde\gamma(t)$ respectively for every $0<t<\ell$.  Let $\tilde J$  be a nontrivial Jacobi field along $\tilde  \gamma$ satisfying $\tilde J(0)=0$, $|\tilde J^\prime(0)| = 1$, and $\langle \tilde\gamma^\prime(0), \tilde J^\prime(0) \rangle = 0$.  Then $\tilde J(\ell) = 0$. Let $J$ be a Jacobi field along $\gamma$  satisfying
$ J(0)=0$, $| J^\prime(0)| = 1$, and $\langle \gamma^\prime(0),  J^\prime(0) \rangle = 0$.  By Theorem \ref{t:rauch},
$$ | J(\ell) | \leq |\tilde J(\ell) | = 0 $$ 
which contradicts that $\gamma$ is conjugate free for $t \in [0,\ell]$.
\end{proof}

More can be said in the case of equality in Theorem \ref{t:rauch}.

\begin{proposition}\label{p:eqlty}
Assume the hypothesis of Theorem \ref{t:rauch}.  Suppose $J$ and $\bar J$ are Jacobi fields along $\gamma$ and $\bar \gamma$ respectively which are perpendicular to the geodesics.  Assume 
 \begin{equation*}
| J(t) | = |\bar J (t) | = y(t)
\end{equation*} 
for all $ 0\leq t \leq t_0$.  Then there exist parallel unit vector fields $P$ and $\bar P$ along $\gamma$ and $\bar \gamma$ such that
 \begin{equation*}
J(t) =  y(t)P \quad\mathrm{and}\quad  \bar J(t) = y(t) \bar P.
\end{equation*}
\end{proposition}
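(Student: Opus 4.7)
The plan is to use the norm equality to identify, at each $t \in (0, t_0]$, the vectors $J(t)/y(t)$ and $\bar J(t)/y(t)$ as specific eigenvectors of the Hessian operators of $L_o$ and $L_{\bar o}$, and then to read off the warped-parallel decomposition from a first-order linear ODE along the geodesics.

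First, I will invoke the standard identity for a radial Jacobi field: if $J$ is a Jacobi field along $\gamma$ with $J(0) = 0$, then $J'(t) = \nabla_{J(t)} \xi$ at $\gamma(t)$, where $\xi = \mathrm{grad}\, L_o$, since the coordinate fields of the underlying geodesic variation commute. Differentiating $|J(t)|^2 = y(t)^2$ gives $\langle J'(t), J(t)\rangle = y(t)\, y'(t)$, and combining these two identities yields
$$\nabla^2 L_o(J(t)/y(t),\, J(t)/y(t)) = y'(t)/y(t) \quad \text{at } \gamma(t), \quad t \in (0, t_0],$$
and symmetrically $\nabla^2 L_{\bar o}(\bar J(t)/y(t),\, \bar J(t)/y(t)) = y'(t)/y(t)$ at $\bar\gamma(t)$.

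Next, I will apply the Hessian comparison hypothesis, understood as $\nabla^2 L_o(X, X) \le \nabla^2 L_{\bar o}(\bar X, \bar X)$ for every unit $X \perp \gamma'(t)$ at $\gamma(t)$ and every unit $\bar X \perp \bar\gamma'(t)$ at $\bar\gamma(t)$. Taking $\bar X = \bar J/y$ gives the universal upper bound $\nabla^2 L_o(X, X) \le y'/y$ at $\gamma(t)$, and this bound is attained at $X = J/y$. Thus $J(t)/y(t)$ maximizes the restriction of the quadratic form $\nabla^2 L_o$ to the unit sphere of $\gamma'(t)^\perp$, and so by the spectral theorem is an eigenvector of the associated symmetric operator with eigenvalue $y'(t)/y(t)$. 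Since $\nabla^2 L_o(J, Y) = \langle \nabla_J \xi, Y\rangle = \langle J', Y\rangle$ for every $Y$, this eigenvector relation becomes the linear ODE
$$ J'(t) = \frac{y'(t)}{y(t)}\, J(t) \quad \text{along } \gamma.$$

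To finish, I will pick a parallel orthonormal frame $(e_1, \ldots, e_{n-1})$ for $(\gamma')^\perp$ along $\gamma$ and write $J(t) = \sum_i a^i(t)\, e_i(t)$. The ODE decouples into $(a^i)'(t) = (y'(t)/y(t))\, a^i(t)$, forcing $a^i(t) = c_i\, y(t)$ for constants $c_i$, so that $P := \sum_i c_i e_i$ is a parallel unit vector field satisfying $J(t) = y(t)\, P(t)$ on $(0, t_0]$; the identity extends continuously to $t = 0$ since $P(t) \to J'(0)/|J'(0)|$. The identical argument in $\bar M$ yields $\bar J = y\, \bar P$ with $\bar P$ parallel along $\bar\gamma$. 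The main conceptual obstacle is reading the Hessian inequality as a pointwise comparison of Hessian values on arbitrary (rather than a priori matched) unit vectors of the two distinct tangent spaces; once this interpretation is in hand, the equality attained at $J/y$ versus $\bar J/y$ forces the eigenvector conclusion, after which the remainder is a routine frame calculation.
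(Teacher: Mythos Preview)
Your argument is correct and follows essentially the same route as the paper's proof. Both arguments use the equality $\nabla^2 L_o(J,J)=\nabla^2 L_{\bar o}(\bar J,\bar J)$ together with the Hessian comparison to identify $J(t)/y(t)$ as an eigenvector of the shape operator $X\mapsto\nabla_X\xi$ with eigenvalue $y'(t)/y(t)$, and then read off the first-order ODE $J'=(y'/y)J$ to conclude that $J/y$ is parallel; your use of a parallel orthonormal frame to integrate the ODE is just a coordinate version of the paper's direct computation $\nabla_{\gamma'}(J/y)=0$.
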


\begin{proof}
From the proof of Theorem \ref{t:rauch} we have
\begin{equation*}
\nabla^2L_o(J(t), J(t)) = \nabla^2L_{\bar o}(\bar J(t), \bar J(t)) 
\end{equation*}
for all $t \in (0,t_0)$.   Since $\nabla^2 L_o \leq \nabla^2L_{\bar o}$ at $\gamma(t)$ and $\bar\gamma(t)$ for every $0<t<a$, this shows that $J(t)$ is the eigenvector for the largest eigenvalue of $\nabla^2L_o$ while $\bar J(t)$ is that for the smallest eigenvalue of $\nabla^2L_{\bar o}$ and those eigenvalues are equal for $t \in (0,t_0)$.   This common eigenvalue can be denoted $\lambda(t)$  and is a continuous function of $t$.  
What this means is that
\begin{equation}
\nabla^2L_o(J(t), Y) = \lambda(t) \langle J(t), Y\rangle \quad\mathrm{and}\quad \nabla^2L_{\bar o}(\bar J(t), \bar Y) = \lambda(t) \langle \bar J(t), \bar Y\rangle
\end{equation}
for every $Y\in T_{\gamma(t)}M$ and $\bar Y \in T_{\bar\gamma(t)}\bar M$.
To compute the Hessians we can look in geodesic coordinates about $o$ and $\bar o$ to extend the velocity vector fields of the geodesics to the radial fields $T$ and $\bar T$ and  the Jacobi fields to $J$ and $\bar J$ which commute with $T$ and $\bar T$ respectively. Extend $Y$ and $\bar Y$, then a short computation shows that
\begin{equation}
\nabla^2L_o(J, Y) =  \langle \nabla_TJ, Y\rangle \quad\mathrm{and}\quad \nabla^2L_{\bar o}(\bar J, \bar Y) =  \langle \nabla_{\bar T}\bar J, \bar Y\rangle.
\end{equation}
Thus  along $\gamma$ amd $\bar\gamma$
\begin{equation}
\nabla_T J = \lambda J \quad\mathrm{and}\quad \nabla_{\bar T}\bar J = \lambda \bar J.
\end{equation}
Therefore
$$ \nabla_T \left( \frac 1 {y(t)} J\right) = 0 \quad \mathrm{and}\quad \nabla_{\bar T} \left( \frac 1 {y(t)} \bar J\right) = 0 $$
which shows that 
\begin{equation}
 P(t) = \frac 1 {y(t)} J \quad \mathrm{and}\quad \bar P(t) = \frac 1 {y(t)} \bar J
\end{equation}
 are unit parallel vector fields along $\gamma$ and $\bar\gamma$ \end{proof}
 
 \begin{remark}
If $J$ and $\bar J$ are not perpendicular Jacobi fields in Proposition \ref{p:eqlty}, then we can write
$J(t) = ct T + J_0(t)$ and $\bar J(t) = ct \bar T + \bar J_0(t)$ where $J_0$ and $\bar J_0$ are perpendicular Jacobi Fields vanishing at $0$.  Then by same reasoning in the proposition we conclude that $J_0(t) = y_0(t) P(t)$ and $\bar J_0(t) = y_0(t) \bar P(t)$, where $P$ and $\bar P$ are parallel unit fields and $y_0(t) = |J_0(t) | = |\bar J_0(t)|$.
\end{remark}

\begin{corollary}\label{c:rauch2}
Assume the model surface $(\widetilde M, \tilde o)$ has weaker radial attraction than the complete pointed Riemannian manifold $(M,o)$, and let $\Phi :T_{\tilde o} \widetilde M\rightarrow T_oM$ be a linear isometric inclusion.  Let $X : [a,b] \rightarrow T_{\tilde o}\widetilde M$ be a smooth curve such that $|X(s)| < \ell$ and the geodesics $\gamma_s(t) = exp_o(t \Phi X(s))$ are cut point free for $ 0\leq t \leq 1$ and for all $s \in [a,b]$.  Then
$$
 \mathrm{Length}(\exp_o\circ\Phi\circ X) \leq \mathrm{Length}(\exp_{\tilde o}\circ X).
$$
\end{corollary}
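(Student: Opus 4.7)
The plan is to express the lengths as integrals of Jacobi-field norms and then invoke the Rauch-type comparison of Theorem \ref{t:rauch} pointwise in $s$. Fix $s_0 \in [a,b]$ and set $c = |X(s_0)|$; I will assume $c > 0$, the case $c = 0$ being immediate since then $(\exp_o\circ\Phi\circ X)'(s_0) = \Phi X'(s_0)$ has the same norm as $(\exp_{\tilde o}\circ X)'(s_0) = X'(s_0)$. Consider the variations $f(s,t) = \exp_o(t\Phi X(s))$ and $\tilde f(s,t) = \exp_{\tilde o}(tX(s))$. Their $s$-derivatives $J(t) = \partial_s f(s_0,t)$ and $\tilde J(t) = \partial_s \tilde f(s_0,t)$ are Jacobi fields along $\gamma_{s_0}$ and $\tilde\gamma_{s_0}$ with $J(0) = \tilde J(0) = 0$, $J'(0) = \Phi X'(s_0)$, $\tilde J'(0) = X'(s_0)$. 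Since $\Phi$ is an isometric inclusion, $|J'(0)| = |\tilde J'(0)|$ and $\langle J'(0), \gamma_{s_0}'(0)\rangle = \langle \tilde J'(0), \tilde\gamma_{s_0}'(0)\rangle$. Because $J(1)$ and $\tilde J(1)$ are the tangent vectors to the two curves at $s_0$, it suffices to prove the pointwise estimate $|J(1)| \leq |\tilde J(1)|$; integration in $s$ then yields the length inequality.

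To bring Theorem \ref{t:rauch} to bear, I would reparameterize by arc length: set $\hat\gamma(\tau) = \gamma_{s_0}(\tau/c)$, $\hat{\tilde\gamma}(\tau) = \tilde\gamma_{s_0}(\tau/c)$, $\hat J(\tau) = J(\tau/c)$, $\hat{\tilde J}(\tau) = \tilde J(\tau/c)$ for $\tau \in [0,c]$. Then $\hat\gamma$ and $\hat{\tilde\gamma}$ are unit-speed radial geodesics from $o$ and $\tilde o$, the rescaled Jacobi fields still satisfy the matching initial-condition hypotheses of Theorem \ref{t:rauch}, and $|\hat J(c)| = |J(1)|$, $|\hat{\tilde J}(c)| = |\tilde J(1)|$. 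The no-conjugacy requirement is satisfied: $\hat\gamma$ is cut-point free on $[0,c]$ by assumption, and the meridian $\hat{\tilde\gamma}$ has length $c < \ell$, so it remains within the conjugate-point-free portion of $\widetilde M$ issuing from $\tilde o$.

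The crucial step, and the one I expect to require the most care, is the Hessian comparison $\nabla^2 L_o \leq \nabla^2 L_{\tilde o}$ at $\hat\gamma(\tau)$ and $\hat{\tilde\gamma}(\tau)$ for $\tau \in (0,c)$. The resolution exploits that both comparison geodesics are radial: their velocities coincide with the radial fields $\xi$ and $\tilde\xi$, and $L_o(\hat\gamma(\tau)) = L_{\tilde o}(\hat{\tilde\gamma}(\tau)) = \tau$. For any pair of vectors $Y \in T_{\hat\gamma(\tau)}M$ and $\tilde Y \in T_{\hat{\tilde\gamma}(\tau)}\widetilde M$ of equal norm and equal tangential component with respect to the geodesics, Corollary \ref{c:equiv} combined with the explicit formula $\nabla^2 L_{\tilde o} = \frac{y'}{y}(g_{\widetilde M} - dr\otimes dr)$ gives
\[
\nabla^2 L_o(Y,Y) \leq \frac{y'(\tau)}{y(\tau)}\bigl(|Y|^2 - \langle Y,\xi\rangle^2\bigr) = \frac{y'(\tau)}{y(\tau)}\bigl(|\tilde Y|^2 - \langle \tilde Y,\tilde\xi\rangle^2\bigr) = \nabla^2 L_{\tilde o}(\tilde Y,\tilde Y),
\]
which is exactly the Hessian inequality needed. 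Theorem \ref{t:rauch} now yields $|\hat J(c)| \leq |\hat{\tilde J}(c)|$, hence $|J(1)| \leq |\tilde J(1)|$, and integrating over $s$ produces the desired length comparison.
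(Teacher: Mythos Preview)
Your proof is correct and follows essentially the same approach as the paper: express the velocities of the two curves as Jacobi fields $J_s(1)$ and $\tilde J_s(1)$ arising from the variations $\exp_o(t\Phi X(s))$ and $\exp_{\tilde o}(tX(s))$, match their initial data via the isometric inclusion $\Phi$, apply Theorem~\ref{t:rauch} pointwise, and integrate. Your added care with the arc-length reparameterization, the explicit Hessian verification via Corollary~\ref{c:equiv}, and the degenerate case $|X(s_0)|=0$ are details the paper leaves implicit but do not constitute a different method.
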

\begin{proof}
Set $\tilde \gamma_s(t) = \exp_{\tilde o}(t X(s))$.  Then $\gamma_s$ and $\tilde\gamma_s$ are variations through geodesics, and hence their transverse fields are Jacobi fields $J_s$ and $\tilde J_s$ along $\gamma_s$ and $\tilde\gamma_s$ respectively which satisfy $J_s(0)=0$, $\tilde J_s(0) = 0$, $J_s^\prime(0) = \Phi (X^\prime(s))$ and $\tilde J_s^\prime(0) =  X^\prime(s)$.  Since $\Phi$ is a linear isometric inclusion, the conditions on the initial conditions of $J_s$ and $\tilde J_s$ in Theorem \ref{t:rauch} are satisfied.  Thus
$$  | J_s(1) | \leq |\tilde J_s(1)|$$
for all $s \in [a,b]$.  Therefore on integrating
\begin{equation}
 \mathrm{Length}(\exp_o\circ\Phi\circ X) = \int_a^b |J_s(1)| ds        \leq  \int_a^b |\tilde J_s(1)| ds  = \mathrm{Length}(\exp_{\tilde o}\circ X).
 \end{equation}.
\end{proof}

\begin{remark}\label{r:rauch2}
If equality holds in Corollary \ref{c:rauch2}, then  we have $|J_s(1)| = |\tilde J_s(1)|$ for all $s$.   Then by Theorem \ref{t:rauch},  $ |J_s(t)| = |\tilde J_s(t)|$ for all $s$ and $t$.  We can then argue that the surface $S$  ruled by the geodesics $\gamma_s$ and the surface $\tilde  S$ ruled by the $\tilde \gamma_s$ have isometric interiors. 
\end{remark}

\subsection{Maximal Radius Theorem}
\begin{proposition}\label{p:MaxRad}
 Let $M$ be a complete $n$--dimensional Riemannian manifold, and let $o$ be a fixed point in $M$.
Let $\widetilde M$ be a compact model surface with vertex $\tilde o$ whose metric in a normal polar coordinate system around $\tilde o$ takes the form
$$ dr^2 + y(r)^2 d\theta^2, \quad 0 < r <\ell.$$
Assume that $(\widetilde M, \tilde o)$   has weaker radial attraction than $(M,o)$ and that
 the cut locus of $o$ in $M$ is a single point $q$ whose distance  from $o$ is $\ell$. 
Then $M$ is diffeomorphic to a sphere $S^n$ and its metric in geodesic coordinates about $o$ is given by
$$ dr^2 + y(r)^2 d\theta_{n-1}^2$$
where $y$ is the function defining the metric on $\widetilde M$ in polar coordinates and $d\theta_{n-1}^2$ is the standard Riemannian metric of constant curvature 1 on $S^{n-1}$.
\end{proposition}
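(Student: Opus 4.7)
The plan is to realize $M$ topologically as $S^n$ via $\exp_o$, then use Hessian comparisons both from $o$ and from the antipodal point $q$ (enabled by the hypothesis forcing $L_o + L_q \equiv \ell$) to squeeze the volume of $M$ to the model value, and extract the warped-product metric from the resulting rigidity.

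Since $\mathrm{Cut}(o) = \{q\}$ and $d(o,q) = \ell$, every unit-speed geodesic $\gamma_v$ from $o$ minimizes on $[0,\ell]$ and satisfies $\gamma_v(\ell) = q$; hence $\exp_o$ maps $\overline{B(0,\ell)}\subset T_oM$ onto $M$, restricts to a diffeomorphism on the open ball, and collapses the boundary sphere to $q$, giving $M \cong S^n$. Every $p\in M$ lies on such a minimizing segment, so $L_o + L_q \equiv \ell$ on $M$, and consequently $\mathrm{Hess}\,L_q = -\mathrm{Hess}\,L_o$ on $M\setminus\{o,q\}$. Injectivity of $\exp_o$ on the open ball makes the minimizing $o$-to-$q$ segment through each $p\neq o,q$ unique, forcing $\mathrm{Cut}(q) = \{o\}$ as well. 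The analogous identity $L_{\tilde o} + L_{\tilde q} \equiv \ell$ holds on $\widetilde M$ because every point of a compact surface of revolution pinched at both poles lies on a meridian of length $\ell$, so $\mathrm{Hess}\,L_{\tilde q} = -\mathrm{Hess}\,L_{\tilde o}$ on $\widetilde M$ off the poles.

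For the volume comparison, let $A_v(t)$ denote the matrix Jacobi tensor along $\gamma_v$ with $A_v(0) = 0$ and $A_v'(0) = I$ on $v^\perp$. Weaker radial attraction (Corollary \ref{c:equiv}) gives $\mathrm{Hess}\,L_o(X,X) \leq (y'/y)(t)$ for unit $X \perp \xi$ at $\gamma_v(t)$; tracing over $v^\perp$ produces $(\log\det A_v)'(t) \leq (\log y^{n-1})'(t)$, and integrating from $0^+$ (using $\det A_v(s) \sim s^{n-1}$ and $y(s) \sim s$ at the origin) yields $\det A_v(t) \leq y(t)^{n-1}$, whence $\mathrm{Vol}(M) = \int_{T^1_oM}\int_0^\ell \det A_v(t)\,dt\,dv \leq \omega_{n-1}\int_0^\ell y(t)^{n-1}\,dt$. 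The identity $\mathrm{Hess}\,L_q = -\mathrm{Hess}\,L_o$ (and its model analogue) reverses the inequality from $q$: with $\bar y(s) := y(\ell-s)$, one gets $\mathrm{Hess}\,L_q(X,X) \geq (\bar y'/\bar y)(L_q)$ for $X \perp \bar\xi$, and the same integration yields $\det B_w(s) \geq \bar y(s)^{n-1}$ for the $q$-side Jacobi tensor $B_w$. Integrating gives $\mathrm{Vol}(M) \geq \omega_{n-1}\int_0^\ell y(t)^{n-1}\,dt$, so equality holds and $\det A_v(t) \equiv y(t)^{n-1}$.

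Given $\det A_v \equiv y^{n-1}$, Theorem \ref{t:rauch} bounds each column $J_{e_i}(t)$ of $A_v(t)$ by $|J_{e_i}(t)| \leq y(t)$, and Hadamard's inequality $\det A_v(t) \leq \prod_i |J_{e_i}(t)|$ becomes equality throughout; this forces $|J_{e_i}(t)| = y(t)$ and pairwise orthogonality of the $J_{e_i}(t)$. Proposition \ref{p:eqlty} then expresses each $J_{e_i}(t) = y(t)P_i$ with $P_i$ parallel of unit length, and the $P_i$ are orthonormal because they are the parallel transports of the initial orthonormal basis $\{e_i\}$ along $\gamma_v$. Reading off the metric in geodesic polar coordinates around $o$ then delivers $dr^2 + y(r)^2\,d\theta_{n-1}^2$. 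The main obstacle I anticipate is the symmetric setup -- specifically verifying $\mathrm{Cut}(q) = \{o\}$ and the model-side identity $L_{\tilde o} + L_{\tilde q} \equiv \ell$ with enough regularity to run the Jacobi-tensor integration from $q$; once those symmetry tools are secured, the volume squeeze together with Proposition \ref{p:eqlty} finishes the proof automatically.
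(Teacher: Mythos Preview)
Your argument is correct and takes a genuinely different route from the paper. The paper defines the \emph{scattering map} $\Psi:\Sigma_o\to\Sigma_q$, $\Psi(X)=-\gamma_X'(\ell)$, shows $|\Psi_*(Y)|\le 1$ for unit $Y$ by applying Theorem~\ref{t:rauch} and l'H\^opital at $r=\ell$, and then forces $|\Psi_*|\equiv 1$ by comparing the $(n-1)$--volumes of the two unit tangent spheres via the change--of--variables formula; the equality case of Theorem~\ref{t:rauch} then yields $|J(r)|=y(r)$ for every perpendicular Jacobi field, and Proposition~\ref{p:eqlty} finishes. You instead run a two--sided Bishop--type comparison on $\mathrm{Vol}(M)$: the upper bound from $o$ uses $S\ge 0$ directly, and the matching lower bound from $q$ comes from $\mathrm{Hess}\,L_q=-\mathrm{Hess}\,L_o$ together with $(\bar y'/\bar y)(s)=-(y'/y)(\ell-s)$, which flips the inequality. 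Equality in the volume forces $\det A_v\equiv y^{n-1}$, and then Hadamard plus Theorem~\ref{t:rauch} pin each $|J_{e_i}|$ to $y$, after which you rejoin the paper at Proposition~\ref{p:eqlty}. The paper's approach is a little slicker in that it only integrates over $S^{n-1}$ rather than over $M$, but yours is closer in spirit to classical Bishop--Gromov rigidity and makes the role of the antipodal symmetry more explicit.

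On your anticipated obstacle: your own segment--uniqueness argument already gives $\mathrm{Cut}(q)=\{o\}$. For $p\neq o,q$ you showed the minimizing geodesic from $q$ to $p$ is unique (any second one would concatenate with the unique $o$--to--$p$ segment to produce a second minimizing $o$--to--$q$ geodesic through $p$, contradicting injectivity of $\exp_o$ on the open $\ell$--ball). Hence the set of points with more than one $q$--minimizer is contained in $\{o\}$; since the cut locus is the closure of that set and is nonempty ($M$ is compact and there is an $S^{n-1}$ of minimizing geodesics from $q$ to $o$), you get $\mathrm{Cut}(q)=\{o\}$. That is all you need to run the $q$--side Jacobi--tensor integration, and the model--side identity $L_{\tilde o}+L_{\tilde q}\equiv\ell$ is immediate from the two--pole structure of a compact surface of revolution (with $y(\ell)=0$, $y'(\ell)=-1$ giving the correct asymptotics $\bar y(s)\sim s$ at $s=0^+$). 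So the squeeze is complete as written.
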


\begin{proof}
By hypothesis,  every minimizing geodesic emanating from $o$ has length $\ell$ and ends at the point $q$. Thus $M$ is an Allamigeon--Warner manifold  or a Blashke manifold at $o$ of the type that is homeomorphic to a sphere \cite[Chapter 5]{AB}.  Moreover, one may define a smooth mapping $\Psi$ from the unit sphere $\Sigma_o$ in $T_oM$ to the unit sphere $\Sigma_q$ in $T_qM$ by setting $\Psi(X) = -\gamma_X^\prime(\ell)$ where $\gamma_X$ is the geodesic emanating from $o$ whose initial tangent vector $\gamma_X^\prime(0)$  is equal to $X \in \Sigma_o$. Its differential  $\Psi_\ast : T_X\Sigma_o \rightarrow T_{\Psi(X)}\Sigma_q $ can be calculated as follows: Given $Y \in T_X\Sigma_o$,  regard $Y$  as a vector in $T_oM$ which is perpendicular to $X$.  Let $J$ be the Jacobi field along $\gamma_X$ satisfying the initial conditions $J(0)=0$ and $J^\prime(0) = Y$. Then $J$ is an orthogonal Jacobi field along $\gamma_X$. By hypothesis $J(\ell)=0$. Consequently
 $J^\prime(\ell)$ is perpendicular to $\gamma_X^\prime(\ell)$ and so may be regarded as a tangent vector in $T_{\Psi(X)}\Sigma_q $.  It is now clear that $\Psi_\ast(Y) = - J^\prime(\ell)$. (cf. the proof of \cite[Lemma 5.27]{AB}.)
 
 The hypothesis that $(\widetilde M, \tilde o)$ has weaker radial attraction than $(M,o)$ implies that $|\Psi_\ast(Y)| \leq |Y|$ for all $Y \in T_X\Sigma_o$.  To prove this, it will suffice  to show that
 $|\Psi_\ast(Y)| \leq  1$ when $|Y| =1$.  Let $\bar \gamma : [0, \ell] \rightarrow \widetilde M$ be a meridian of $\widetilde M$ emanating from $\tilde o$, then the Hessian comparison in the hypothesis of Theorem \ref{t:rauch} is satisfied along $\gamma_X$ and $\bar\gamma$ on the interval $[0,\ell]$.  Assume $|Y| =1$ and let $J$ be the Jacobi field along $\gamma_X$ satisfying $J(0)=0$ and $J^\prime(0)=Y$.  Since $y(r)$ is the norm of the corresponding Jacobi field along $\bar\gamma$,  Theorem \ref{t:rauch}  implies that
 $ |J(r) | \leq y(r)$ for all $0 \leq r \leq \ell$.  Set $f(r) = \langle J(r), J(r)\rangle$ and $\bar f (r) = y(r)^2$.
 Then $ \frac{f(r)}{\bar f(r)} \leq 1$ for $ 0<r<\ell$.  Thus by two applications of l'H\^opital's rule,
 \begin{eqnarray}\label{e:limit}
 1& \geq& \lim_{r \to \ell^-} \frac{f(r)}{\bar f(r)} = 
  \lim_{r\to \ell^-}\frac {\langle J(r) , J^\prime(r)\rangle}{y(r)y^\prime(r)} = 
  \lim_{r\to \ell^-}
  \frac {\langle
   J^\prime(r),J^\prime(r)\rangle+\langle J(r),J^{\prime\prime}(r)\rangle} {y^\prime(r)^2 + y(r)y^{\prime\prime}(r)} \cr
    & = & \frac {\langle J^\prime(\ell), J^\prime(\ell)\rangle}{y^\prime(\ell)^2} = |J^\prime(\ell)|^2
 \end{eqnarray}
 because $J(\ell)=0$, $y(\ell)=0$ and $y^\prime(\ell)=-1$.  Therefore $|\Psi_\ast(Y)| = |J^\prime(\ell)| \leq 1$.
 
The next step is to  show that $|\Psi_\ast(Y)| = 1$ whenever $|Y|=1$.  To prove this we proceed as follows:  Let $\Omega$ and $\hat \Omega$ be the volume
$(n-1)$ forms on the respective unit spheres $\Sigma_o$ and $\Sigma_q$.  Then $\Psi^\ast \hat \Omega = \lambda \Omega$ for some non--vanishing real valued function $\lambda$ on $\Sigma_o$. Note that if $Y_1, \dots, Y_{n-1}$ is an orthonormal basis for $T_X\Sigma_o$ then
\begin{eqnarray*}
|\lambda(X)| &=& |\lambda(X) \Omega(Y_1,\dots,Y_{n-1})| = |\hat\Omega(\Psi_\ast(Y_1),\dots,\Psi_\ast(Y_{n-1}))| \cr
&\leq& |\Psi_\ast(Y_1)| \cdots |\Psi_\ast(Y_{n-1})|. 
\end{eqnarray*}
Consequently, $|\lambda(X)| \leq 1$ for all $X \in \Sigma_o$, and the inequality is 
strict at points $X \in \Sigma_0$ where there exists a unit tangent vector $Y$  with $|\Psi_\ast(Y)| < 1$.
It follows from this and the change of variables formula that
\begin{equation}\label{e:vol}
Vol(S^{n-1}) = \left| \int_{\Sigma_o} \Omega\right| \geq \left| \int _{\Sigma_o} \lambda\Omega \right| = \left| \int_{\Sigma_o} \Psi^\ast\hat\Omega\right| = \left|\int_{\Sigma_q}\hat\Omega\right| = Vol(S^{n-1}).
\end{equation}
Consequently, equality holds in (\ref{e:vol}), and  we deduce that  
$|\lambda| =1$ everywhere. Therefore $|\Psi_\ast(Y)| = 1$ whenever $|Y|=1$. 
Hence, by (\ref{e:limit}) and Theorem \ref{t:rauch}, 
 for every such $Y$, the Jacobi field $J$ with $J(0)=0$ and $J^\prime(0)=Y$ satisfies
$|J(r)| = y(r)$ for all $ 0\leq r \leq \ell$. By Proposition \ref{p:eqlty} 
the metric on $M$ takes the form 
$$ dr^2 + y(r)^2 d\theta_{n-1}^2$$
in polar coordinates about $o$.  Moreover $M$ must be diffeomorphic to $S^n$ as the map $\Psi$ is an isometry.
\end{proof}

\subsection{ Proof of Theorem \ref{t:MRT} }

Suppose that every geodesic triangle $\triangle opq$ has an Alexandrov triangle $\triangle \tilde  o \tilde p \tilde q$ in $\widetilde M$ and that there is a point $q \in M$ with $d(o,q) = \ell$. By Theorem \ref{t:main}, $\widetilde M$ has weaker radial attraction than $M$. 
Thus the distance of every point of $M$ from $o$ is at most $\ell$ by Corollary \ref{c:rauch1}. Let $\tau$ be any minimizing geodesic emanating from $o$ to some point $p$. By hypothesis, the triangle $\triangle opq$ has a comparison triangle $\triangle\tilde o\tilde p\tilde q$ in $\widetilde M$.  Thus
$d(o,p) + d(p,q) = d(\tilde o, \tilde p) + d(\tilde p,\tilde q) = d(\tilde o, \tilde q) =\ell$.
Hence $\tau$ extends to a minimizing geodesic joining $o$ to $q$.  Since $\tau$ was arbitrary, $q$ is the cut point along every geodesic emanating from $o$. Hence the cut locus of $o$ is the single point $q$. Therefore 
the hypothesis of Proposition \ref{p:MaxRad} is satisfied, and the result follows.

\section{Bad Encounters}

In this section assume $(M,o)$ is a pointed complete Riemannian manifold and $(\widetilde M, \tilde o)$ is a model surface of revolution about the point $\tilde o$. We make no further assumptions about the cut loci of points in $\widetilde M$.  For a given point  $p$  in $M$, let $\tilde p$ be the point on the zero meridian of $\widetilde M$, such that  $ d(\tilde o, \tilde p) = d(o,p)$.
Recall from Section 2 the reference map $\widetilde F : \widetilde M ^+ \to \mathbb{R}^2$ defined by  $\widetilde F(\tilde q) = (L_{\tilde p}(\tilde q), L_{\tilde o}(\tilde q))$ and the similarly defined reference map $F : M \to \mathbb{R}^2$.

\begin{definition}\label{d:badencounter}
Let $\sigma : [0,l] \to M$ be a minimizing geodesic in $M$ emanating from $p \in M$.  
We say that $\sigma$ has an \emph{encounter with the cut locus} at $t_0 \in (0,l)$ if
$F(\sigma(t_0)) \in \widetilde F(C(\tilde p)\cap int(\widetilde M^+))$.  Suppose that $\tilde q $ is the unique point in $C(\tilde p) \cap int(\widetilde M^+)$  such that $\widetilde F(\tilde q) = F(\sigma(t_0))$ and $\alpha$ is the arc in $C(\tilde p)$ joining $\tilde q$ to the trunk. The encounter at $t_0$ is a \emph{bad encounter} if for every $\epsilon >0$ there exists $ t^\ast \in (t_0, t_0+\epsilon)$ such that $L_o ( \sigma(t^\ast)) > L_{\tilde o}(\alpha(t^\ast))$.
\end{definition}

\begin{proposition}\label{p:badencounter}
Suppose the minimizing geodesic  $\sigma : [0,l] \to M$ has an encounter with the cut locus at $t_0 \in (0,l)$ and that there exists a $t_1 \in (t_0, l]$ such that
$L_o ( \sigma(t_1)) > L_{\tilde o}( \alpha(t_1))$. Then there is a bad encounter  at some $\bar t \in [t_0, t_1)$. 
\end{proposition}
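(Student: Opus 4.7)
The plan is to reduce the question to a one-variable continuity argument for the function $g(t) := L_o(\sigma(t)) - L_{\tilde o}(\alpha(t))$ on $[t_0, t_1]$. Since $\sigma$ is a unit-speed minimizing geodesic emanating from $p$, the reference image is $F(\sigma(t)) = (t, L_o(\sigma(t)))$, and since the arc $\alpha$ is parameterized by distance from $\tilde p$, similarly $\widetilde F(\alpha(t)) = (t, L_{\tilde o}(\alpha(t)))$. The injectivity of $\widetilde F$ on $\widetilde M^+$ together with $\widetilde F(\tilde q) = F(\sigma(t_0))$ forces $\alpha(t_0) = \tilde q$, and comparing second coordinates yields $g(t_0) = 0$. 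By hypothesis $g(t_1) > 0$.

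Next I set $\bar t := \sup\{ t \in [t_0, t_1] : g(t) \leq 0\}$. Continuity of $g$ makes the sublevel set closed, so $g(\bar t) \leq 0$, and continuity together with $g(t_1) > 0$ forces $\bar t < t_1$. If $g(\bar t)$ were strictly negative, continuity would produce a right-neighborhood of $\bar t$ on which $g < 0$, contradicting the definition of the supremum; hence $g(\bar t) = 0$. Using the coordinate identities above, this gives $F(\sigma(\bar t)) = \widetilde F(\alpha(\bar t))$, with $\alpha(\bar t) \in C(\tilde p) \cap \operatorname{int}(\widetilde M^+)$ since $\bar t < t_1 < b$ (where $b$ denotes the attachment parameter of $\alpha$ to the trunk, implicit in the hypothesis that $\alpha(t_1)$ is defined). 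Thus $\sigma$ has an encounter at $\bar t$, and the arc of $C(\tilde p)$ joining $\alpha(\bar t)$ to the trunk is precisely the restriction $\alpha|_{[\bar t, b)}$.

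Finally, the defining property of the supremum says that every right-neighborhood of $\bar t$ in $[t_0, t_1]$ contains a point $t^*$ with $g(t^*) > 0$, i.e., $L_o(\sigma(t^*)) > L_{\tilde o}(\alpha(t^*))$, which is exactly the bad-encounter condition at $\bar t$. The only subtle point is confirming that the cut-locus arc relevant to the bad-encounter test at $\bar t$ is the tail of $\alpha$ rather than some unrelated arc; this is guaranteed by the injectivity of $\widetilde F$ on $\widetilde M^+$ together with the uniqueness of the cut-locus arc from an interior cut point to the trunk recalled in Section 2.
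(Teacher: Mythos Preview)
Your proof is correct and follows essentially the same approach as the paper's: both arguments define $\bar t$ as the supremum of the set where $L_o(\sigma(t)) \leq L_{\tilde o}(\alpha(t))$ on $[t_0,t_1]$ and read off the bad-encounter condition from the defining property of the supremum. Your version is more detailed than the paper's one-line proof, explicitly verifying that $g(\bar t)=0$ (so that $\bar t$ is indeed an encounter), that $\alpha(\bar t)$ still lies on a positive branch, and that the relevant cut-locus arc at $\bar t$ is the tail of the original $\alpha$; the paper leaves all of this implicit.
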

\begin{proof}
Set $ \bar t= sup\{ t \in [t_0, t_1) : L_o(\sigma(t)) \leq  L_{\tilde o}(\alpha(t))\}$.  Then $t_0 \leq \bar t < t_1$ and $L_o ( \sigma(t^\ast)) > L_{\tilde o}(\alpha(t^\ast))$ for all $t^\ast \in (\bar t, t_1)$.
\end{proof}

\begin{proposition}
Suppose the minimizing geodesic $\sigma : [0,l] \to M$ emanating from $p$ has a bad encounter with the cut locus at $t_0$. Then choosing any $t^\ast \in (t_0,t_0+\epsilon)$ as in Definition \ref{d:badencounter}, the triangle $\triangle op\sigma(t^\ast)$ does not satisfy Alexandrov convexity from $o$.
\end{proposition}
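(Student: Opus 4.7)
The plan is to argue by contradiction using Lemma~\ref{l:ineq}, mirroring the \emph{only if} direction of the proposition that concludes Section~2.  Suppose $\triangle op\sigma(t^\ast)$ does satisfy Alexandrov convexity from $o$: then there is a point $\tilde q_{t^\ast}\in\widetilde M$ with $d(\tilde p,\tilde q_{t^\ast})=t^\ast$ and $d(\tilde o,\tilde q_{t^\ast})=L_o(\sigma(t^\ast))$, together with a minimizing geodesic $\tilde\sigma^\ast:[0,t^\ast]\to\widetilde M$ joining $\tilde p$ to $\tilde q_{t^\ast}$ such that $L_{\tilde o}(\tilde\sigma^\ast(t))\le L_o(\sigma(t))$ for every $t\in[0,t^\ast]$.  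Reflecting across the meridian $\mu_0$ if necessary, we may assume $\tilde q_{t^\ast}\in\widetilde M^+$; replacing $\tilde\sigma^\ast$ by the lowermost minimizing geodesic from $\tilde p$ to $\tilde q_{t^\ast}$ only decreases $L_{\tilde o}\circ\tilde\sigma^\ast$ pointwise, so via Definition~\ref{d:varsigmaphi} we may write $\tilde\sigma^\ast=\varsigma_{\phi^\ast}\big|_{[0,t^\ast]}$ for some $\phi^\ast\in(0,\pi)$.  In particular $\varsigma_{\phi^\ast}(t^\ast)=\tilde q_{t^\ast}$ and $L_{\tilde o}(\varsigma_{\phi^\ast}(t^\ast))=L_o(\sigma(t^\ast))$.

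Let $\phi_0\in(0,\pi)$ be the angle for which $\tilde\sigma_{\phi_0}$ is the lowermost minimizing geodesic from $\tilde p$ to the cut point $\tilde q$, so that $\tau_{\phi_0}=t_0$, $\varsigma_{\phi_0}(t_0)=\tilde q$, and $\varsigma_{\phi_0}(t)=\alpha(t)$ for $t\ge t_0$.  The encounter hypothesis $\widetilde F(\tilde q)=F(\sigma(t_0))$ gives $L_{\tilde o}(\varsigma_{\phi_0}(t_0))=L_{\tilde o}(\tilde q)=L_o(\sigma(t_0))$, and Alexandrov convexity at $t=t_0$ then yields
$$L_{\tilde o}(\varsigma_{\phi^\ast}(t_0))\le L_o(\sigma(t_0))=L_{\tilde o}(\varsigma_{\phi_0}(t_0)).$$
Because $(L_{\tilde o}\circ\varsigma_{\phi^\ast})_+^\prime(t)=\mathfrak{s}(t,L_{\tilde o}(\varsigma_{\phi^\ast}(t)))$, as noted immediately after Definition~\ref{d:varsigmaphi}, Lemma~\ref{l:ineq} applies with $f=L_{\tilde o}\circ\varsigma_{\phi^\ast}$ and with $\varsigma_{\phi_0}$ as the reference curve, producing $L_{\tilde o}(\varsigma_{\phi^\ast}(t))\le L_{\tilde o}(\varsigma_{\phi_0}(t))$ for all $t\ge t_0$.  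Evaluating at $t=t^\ast$ and using $\varsigma_{\phi_0}(t^\ast)=\alpha(t^\ast)$ gives
$$L_o(\sigma(t^\ast))=L_{\tilde o}(\varsigma_{\phi^\ast}(t^\ast))\le L_{\tilde o}(\alpha(t^\ast)),$$
which contradicts the strict inequality $L_o(\sigma(t^\ast))>L_{\tilde o}(\alpha(t^\ast))$ furnished by the definition of a bad encounter.

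The substantive content is entirely Lemma~\ref{l:ineq}, so the main obstacle is really bookkeeping.  One must check that the reflection of $\tilde q_{t^\ast}$ into $\widetilde M^+$ and the passage to the lowermost minimizing geodesic both preserve Alexandrov convexity, that $\phi^\ast\in(0,\pi)$ (guaranteed by choosing $\epsilon$ small enough that $\tilde q_{t^\ast}$ lies close to $\tilde q\in int(\widetilde M^+)$, using that $\widetilde F$ is a homeomorphism on $\widetilde M^+$), and that $\varsigma_{\phi^\ast}$ and $\varsigma_{\phi_0}$ are both defined on $[t_0,t^\ast]$ with the continuous one-sided derivatives required for Lemma~\ref{l:ineq}.
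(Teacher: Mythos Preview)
Your argument is correct and follows the same underlying idea as the paper's proof, but you supply more detail.  The paper's proof is a single sentence: since $L_o(\sigma(t^\ast))>L_{\tilde o}(\alpha(t^\ast))$, any minimizing geodesic $\tilde\sigma$ from $\tilde p$ to $\tilde q^\ast=\widetilde F^{-1}(F(\sigma(t^\ast)))$ must pass \emph{above} $\tilde q$ at parameter $t_0$, i.e.\ $L_{\tilde o}(\tilde\sigma(t_0))>L_{\tilde o}(\tilde q)=L_o(\sigma(t_0))$, which is exactly the failure of Alexandrov convexity.  You prove the contrapositive of this geometric claim rigorously by invoking Lemma~\ref{l:ineq}: if Alexandrov convexity held, then $L_{\tilde o}\circ\varsigma_{\phi^\ast}$ would lie at or below $L_{\tilde o}\circ\varsigma_{\phi_0}$ at $t_0$, hence (by the non-crossing property) also at $t^\ast$, contradicting $L_o(\sigma(t^\ast))>L_{\tilde o}(\alpha(t^\ast))$.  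The paper's direct assertion and your Lemma~\ref{l:ineq} argument are two sides of the same coin; your version has the advantage of making explicit why ``passes above'' is forced, at the cost of the bookkeeping you note at the end.
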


\begin{proof}
Set  $\tilde q = \widetilde F^{-1}(F(\sigma(t_0)))$.  Since  $L_o( \sigma(t^\ast)) > L_{\tilde o}( \alpha(t^\ast))$, the minimizing geodesic $\tilde \sigma$  joining $\tilde p$ to $\tilde q^\ast = \widetilde F^{-1}(F(\sigma(t^\ast)))$ passes above $\tilde q$, that is, $ L_{\tilde o}(\tilde\sigma(t_0)) > L_{\tilde o}(\tilde q) = L_o(\sigma(t_0))$, which violates Alexandrov convexity.
\end{proof}

This establishes the necessity of the assumption of no bad encounters in Theorem \ref{t:main}.

\subsection{ Ensuring encounters with the cut locus are not bad}

Let us assume that $\sigma$ is a minimizing geodesic in $M$ joining $p$ to $q$. Suppose that  the reference curve $F \circ \sigma$ is contained in $\widetilde F(\widetilde M^+)$ where as usual $d(\tilde p,\tilde o) = d(p,o)$.  Moreover suppose that $\sigma$ encounters the cut locus at $t_0$ for some $ 0 < t_0 < d(p,q)$.  Set $q_0 = \sigma(t_0)$,  set $\tilde q_0 = \widetilde F^{-1}(F(q_0))$, 
and let $\alpha$ denote the arc in $C(\tilde p)$ connecting $\tilde q_0$ to the trunk of $C(\tilde p)$.

\begin{lemma}\label{l:4}
The encounter at $q_0$ is not bad if $(L_o\circ \sigma)_+^\prime(t_0) <( L_{\tilde o}\circ \alpha)_+^\prime(t_0)$.
\end{lemma}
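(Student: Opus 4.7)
The plan is to show, under the stated strict inequality of right-hand derivatives, that $L_o\circ\sigma(t) \le L_{\tilde o}\circ\alpha(t)$ on some interval $(t_0,t_0+\epsilon)$; by Definition \ref{d:badencounter} this is exactly what it means for the encounter at $t_0$ not to be bad. Write $f(t) = L_o(\sigma(t))$ and $g(t) = L_{\tilde o}(\alpha(t))$, so the hypothesis reads $f_+^\prime(t_0) < g_+^\prime(t_0)$, and the goal becomes: find $\epsilon>0$ with $f(t) \le g(t)$ for all $t\in (t_0,t_0+\epsilon)$.

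The first step is to verify the match $f(t_0)=g(t_0)$, so that comparing derivatives is meaningful. Since $\sigma$ is a unit-speed minimizing geodesic emanating from $p$, one has $d(p,\sigma(t_0))=t_0$, hence $F(\sigma(t_0))=(t_0,f(t_0))$. By construction $\tilde q_0=\widetilde F^{-1}(F(\sigma(t_0)))$, so $\widetilde F(\tilde q_0)=(t_0,f(t_0))$, which gives both $L_{\tilde p}(\tilde q_0)=t_0$ and $L_{\tilde o}(\tilde q_0)=f(t_0)$. The arc $\alpha$ in $C(\tilde p)$ is parameterized by distance from $\tilde p$, so $\alpha(t_0)=\tilde q_0$ and therefore $g(t_0)=L_{\tilde o}(\tilde q_0)=f(t_0)$.

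With $f(t_0)=g(t_0)$ in hand, the second step is routine: choose a constant $c$ with $f_+^\prime(t_0)<c<g_+^\prime(t_0)$. By the definition of right-hand derivative there exists $\epsilon>0$ such that, for every $t\in(t_0,t_0+\epsilon)$,
\begin{equation*}
\frac{f(t)-f(t_0)}{t-t_0} < c < \frac{g(t)-g(t_0)}{t-t_0}.
\end{equation*}
Multiplying through by $t-t_0>0$ and using $f(t_0)=g(t_0)$ yields $f(t)<g(t)$ on $(t_0,t_0+\epsilon)$. In particular no $t^{\ast}$ in this interval satisfies $f(t^{\ast})>g(t^{\ast})$, so the encounter at $t_0$ is not bad by Definition \ref{d:badencounter}.

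There is no real obstacle here: the proof is essentially a one-sided tangent-line separation argument once the equality $f(t_0)=g(t_0)$ is observed. The only point that required preparation is the existence of $g_+^\prime(t_0)=(L_{\tilde o}\circ\alpha)_+^\prime(t_0)$, which is supplied by property (4) of the cut locus recorded in Section 2.
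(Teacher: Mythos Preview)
Your proof is correct and follows essentially the same approach as the paper: both arguments use the equality $L_o(\sigma(t_0)) = L_{\tilde o}(\alpha(t_0))$ together with the strict inequality of right-hand derivatives to produce an $\epsilon>0$ with $L_o(\sigma(t)) < L_{\tilde o}(\alpha(t))$ on $(t_0,t_0+\epsilon)$. The paper's version is simply more terse, whereas you have spelled out the verification of $f(t_0)=g(t_0)$ and the elementary derivative argument explicitly.
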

\begin{proof}
Because $L_o(\sigma(t_0)) = L_{\tilde o}(\alpha(t_0))$, $(L_o\circ \sigma)_+^\prime(t_0) <( L_{\tilde o}\circ \alpha)_+^\prime(t_0)$  implies that there exists an $\epsilon > 0$  such that for all $t_0 < t < t_0+\epsilon$ we have 
$L_o(\sigma(t)) < L_{\tilde o}(\alpha(t))$ which shows the encounter is not bad.
\end{proof}

\begin{remark}
Conversely, it is clear that   $(L_o\circ \sigma)_+^\prime(t_0) \leq( L_{\tilde o}\circ \alpha)_+^\prime(t_0)$ if the encounter is not bad.
\end{remark}

Let $\tilde\sigma^\downarrow$ and $\tilde\sigma^\uparrow$ be the lowermost and uppermost minimizing geodesics in $\widetilde M$   joining $\tilde p$ to the cut point $\tilde q_0 \in C(\tilde p)$.  In particular this means that if $\tilde \sigma$ is any minimizing  geodesic joining $\tilde p$ to $\tilde q_0$, then $L_{\tilde o}(\tilde\sigma^\downarrow(t))  \leq L_{\tilde o}( \tilde\sigma(t)) \leq L_{\tilde o}(\tilde\sigma^\uparrow(t))$ for all $ t \in [0,t_0]$. By Lemma \ref{l:formula} we have the following inequalities:
\begin{equation}\label{e:1}
(L_{\tilde o}\circ \tilde\sigma^\uparrow)^\prime(t_0) \leq ( L_{\tilde o}\circ \alpha)_+^\prime(t_0) \leq (L_{\tilde o}\circ \tilde\sigma^\downarrow)^\prime(t_0)
\end{equation}
where $\alpha$ is the arc joining $\tilde q_0$ to the trunk.
Moreover the inequalities are strict when $\tilde\sigma^\downarrow \neq \tilde\sigma^\uparrow$.

Note that generally $\tilde\sigma^\downarrow \neq \tilde\sigma^\uparrow$, except in the case that $\tilde q_0$ is an endpoint of $C(\tilde p)$, and even then only if there is only one minimizing geodesic joining $\tilde p$ to $\tilde q_0$.

 \begin{lemma}\label{l:6}
Assume that there exists an $\epsilon > 0$ such that $L_o(\sigma(t)) \geq L_{\tilde o}(\tilde\sigma^\uparrow(t))$ for all $ t_0 - \epsilon \leq  t  \leq t_0$. Suppose either
\begin{enumerate}
\item $\tilde\sigma^\downarrow \neq \tilde\sigma^\uparrow$, or
\item $\tilde\sigma^\downarrow = \tilde\sigma^\uparrow$ and $(L_o\circ \sigma)_-^\prime(t_0) < (L_{\tilde o}\circ\tilde\sigma^\uparrow)^\prime(t_0)$.
\end{enumerate}
Then $(L_o\circ \sigma)_+^\prime(t_0) <( L_{\tilde o}\circ \alpha)_+^\prime(t_0)$ and the encounter at $q_0$ is not bad.
\end{lemma}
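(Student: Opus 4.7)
The plan is to compare one-sided derivatives at $t_0$ and then invoke Lemma \ref{l:4}. Throughout, observe that $\tilde\sigma^\uparrow(t_0) = \tilde q_0 = \alpha(t_0)$, so $L_{\tilde o}(\tilde\sigma^\uparrow(t_0)) = L_{\tilde o}(\alpha(t_0)) = L_o(\sigma(t_0))$.

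First I would extract a left-hand derivative bound from the one-sided hypothesis. Consider $g(t) = L_o(\sigma(t)) - L_{\tilde o}(\tilde\sigma^\uparrow(t))$. By assumption $g(t) \geq 0$ for $t \in [t_0-\epsilon, t_0]$, and $g(t_0) = 0$ by the remark above. Hence for $t < t_0$ close to $t_0$, $\frac{g(t_0)-g(t)}{t_0-t} = \frac{-g(t)}{t_0-t} \leq 0$; letting $t \to t_0^-$ gives $g'_-(t_0) \leq 0$, that is,
\begin{equation*}
(L_o\circ\sigma)'_-(t_0) \leq (L_{\tilde o}\circ\tilde\sigma^\uparrow)'(t_0).
\end{equation*}
Then I would combine this with the general inequality $(L_o\circ\sigma)'_+(t_0) \leq (L_o\circ\sigma)'_-(t_0)$ from \cite[Corollary 2.3]{HI}, obtaining the chain
\begin{equation*}
(L_o\circ\sigma)'_+(t_0) \leq (L_o\circ\sigma)'_-(t_0) \leq (L_{\tilde o}\circ\tilde\sigma^\uparrow)'(t_0).
\end{equation*}

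Next I would split into the two cases. In case (1), $\tilde\sigma^\downarrow \neq \tilde\sigma^\uparrow$, so the first inequality in (\ref{e:1}) is strict: $(L_{\tilde o}\circ\tilde\sigma^\uparrow)'(t_0) < (L_{\tilde o}\circ\alpha)'_+(t_0)$. Stringing this with the chain above yields $(L_o\circ\sigma)'_+(t_0) < (L_{\tilde o}\circ\alpha)'_+(t_0)$. In case (2), the sharpness is supplied instead by the hypothesis $(L_o\circ\sigma)'_-(t_0) < (L_{\tilde o}\circ\tilde\sigma^\uparrow)'(t_0)$; combining with $(L_o\circ\sigma)'_+(t_0) \leq (L_o\circ\sigma)'_-(t_0)$ and the (non-strict) inequality $(L_{\tilde o}\circ\tilde\sigma^\uparrow)'(t_0) \leq (L_{\tilde o}\circ\alpha)'_+(t_0)$ from (\ref{e:1}) again gives $(L_o\circ\sigma)'_+(t_0) < (L_{\tilde o}\circ\alpha)'_+(t_0)$.

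Finally, in either case the strict inequality $(L_o\circ\sigma)'_+(t_0) < (L_{\tilde o}\circ\alpha)'_+(t_0)$ together with Lemma \ref{l:4} shows that the encounter at $q_0$ is not bad. There is no serious obstacle: the argument is just a careful bookkeeping of one-sided derivatives, with the only subtle point being the extraction of the left-derivative inequality from the hypothesis that $L_o\circ\sigma$ dominates $L_{\tilde o}\circ\tilde\sigma^\uparrow$ on a left neighborhood of $t_0$ while agreeing at $t_0$.
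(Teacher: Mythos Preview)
Your proof is correct and follows essentially the same approach as the paper's: the paper establishes the same chain of inequalities $(L_o\circ\sigma)'_+(t_0) \leq (L_o\circ\sigma)'_-(t_0) \leq (L_{\tilde o}\circ\tilde\sigma^\uparrow)'(t_0) \leq (L_{\tilde o}\circ\alpha)'_+(t_0)$, invoking \cite[Corollary 2.3]{HI} for the first, the one-sided domination hypothesis for the second, and (\ref{e:1}) for the third, with strictness coming from case (1) or case (2) exactly as you argue. Your only addition is spelling out explicitly the difference-quotient argument for the second inequality, which the paper leaves to the reader.
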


\begin{proof}
We have the following string of inequalities.
\begin{equation}
(L_o\circ\sigma)_+^\prime(t_0) \leq (L_o\circ\sigma)_-^\prime(t_0) \leq (L_{\tilde o}\circ\tilde\sigma^\uparrow)^\prime(t_0) \leq ( L_{\tilde o}\circ \alpha)_+^\prime(t_0).
\end{equation}
For the first inequality see \cite[Corollary 2.3]{HI}.  The second is a consequence of the assumption, and the third follows from equation (\ref{e:1}).  In case (1) the third inequality is strict because of equation (\ref{e:1}), and in case (2) the second inequality is strict.  Hence $q_0$ is not a bad encounter by Lemma \ref{l:4}.
\end{proof}

\begin{definition}
Say that $\sigma$ \emph{approaches}  $\tilde q_0 \in C(\tilde p)$ \emph{from the far side} of the cut locus if there exists an $\epsilon > 0$ such that $L_o(\sigma(t)) > L_{\tilde o}(\tilde q)$ for all $ t_0 - \epsilon < t < t_0$ whenever $ \tilde q \in C(\tilde p) $ with $ d (\tilde p, \tilde q)= t$ and the arc in the cut locus connecting $\tilde q$ to the trunk  passes through $\tilde q_0$. 
\end{definition} 

\begin{remark}
It is vacuously true that if $ \tilde q_0$ is an endpoint of a branch of $C(\tilde p)$, then $\sigma$ approaches $\tilde q_0$ from the far side of the cut locus. This definition is adapted from the notion of \lq\lq intersecting positively\rq\rq  in \cite{ISU}.
\end{remark}

\begin{lemma}\label{l:farside}
Suppose that $\sigma$ approaches $\tilde q_0$ from the far side of the cut locus, and that for each $ 0< t< t_0$, there exists a corresponding  Alexandrov triangle  for every triangle $\triangle op\sigma(t)$.  Then
$L_o(\sigma(t)) \geq L_{\tilde o}(\tilde\sigma^\uparrow(t))$ for all $ 0 \leq  t  \leq t_0$ where $\tilde\sigma^\uparrow$ is the uppermost geodesic joining $\tilde p$ to $\tilde q_0$. 
\end{lemma}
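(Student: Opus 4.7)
The plan is to combine the Proposition at the end of Section~2, which converts Alexandrov-triangle existence into a differential inequality, with the angle-monotonicity argument from the proof of Lemma~\ref{l:ineq} and a local analysis of minimizing geodesics near $\tilde q_0$. By that Proposition, the hypothesis that every $\triangle op\sigma(t)$, $t\in(0,t_0)$, has a corresponding Alexandrov triangle is equivalent to the differential inequality $(L_o\circ\sigma)^\prime_+(s)\leq \frak{s}(F(\sigma(s)))$ holding on $(0,t_0)$.  Following the proof of Lemma~\ref{l:ineq}, I would define the angle function $\Phi$ on $\widetilde F(\widetilde M^+)\setminus\widetilde F(C(\tilde p))$ and set $\phi(s):=\Phi(F(\sigma(s)))$ wherever defined; then $\phi$ is non-increasing on intervals where $F\circ\sigma$ avoids $\widetilde F(C(\tilde p))$.

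The key step will be to show that $\phi(s)\to\phi^\uparrow$ as $s\to t_0^-$, where $\phi^\uparrow$ denotes the angle at $\tilde p$ of $\tilde\sigma^\uparrow$.  By the far-side hypothesis, for $s$ in some left neighborhood $(t_0-\epsilon, t_0)$ of $t_0$, $L_o(\sigma(s))$ exceeds $L_{\tilde o}(\tilde q)$ for every $\tilde q\in C(\tilde p)$ at distance $s$ from $\tilde p$ whose arc-to-trunk passes through $\tilde q_0$; since $\alpha$ itself is parameterized by distances $\geq t_0$, this shows $F(\sigma(s))\notin\widetilde F(C(\tilde p))$ locally.  Thus $\tilde q_s:=\widetilde F^{-1}(F(\sigma(s)))$ lies in the local connected component of $\widetilde M^+\setminus C(\tilde p)$ from which $\tilde\sigma^\uparrow$ approaches $\tilde q_0$, so the unique minimizing geodesic from $\tilde p$ to $\tilde q_s$ converges to $\tilde\sigma^\uparrow$.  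Combined with the non-increasing property of $\phi$, this will give $\phi(s)\geq\phi^\uparrow$ throughout $(t_0-\epsilon,t_0)$.

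For such $s$, the reference images $\widetilde F(\tilde\sigma_{\phi(s)})$ and $\widetilde F(\tilde\sigma^\uparrow)$ are distinct integral curves of the slope field $\frak{s}$ issuing from $(0,r_0)$; any later crossing would produce a common cut point of $\tilde p$ reached by both geodesics as minimizing, which is impossible since both lie strictly inside their minimizing ranges on $(0,s]$.  Hence $\widetilde F(\tilde\sigma_{\phi(s)})$ stays at or above $\widetilde F(\tilde\sigma^\uparrow)$, yielding $L_{\tilde o}(\tilde\sigma_{\phi(s)}(u))\geq L_{\tilde o}(\tilde\sigma^\uparrow(u))$ for $u\in[0,s]$.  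Since $\tilde q_s$ is off the cut locus, $\tilde\sigma_{\phi(s)}$ is the side of the Alexandrov triangle $\triangle\tilde o\tilde p\tilde q_s$, and Alexandrov convexity from the base gives $L_{\tilde o}(\tilde\sigma_{\phi(s)}(u))\leq L_o(\sigma(u))$.  Chaining these two inequalities produces $L_o(\sigma(u))\geq L_{\tilde o}(\tilde\sigma^\uparrow(u))$ for $u\in[0,s]$, and letting $s\to t_0^-$ covers $[0,t_0)$; continuity extends the inequality to $u=t_0$ with equality.

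The main technical obstacle will be the limit $\phi(s)\to\phi^\uparrow$: one must verify that the set of points in a small neighborhood of $\widetilde F(\tilde q_0)$ strictly above all arcs of $\widetilde F(C(\tilde p))$ that attach to $\widetilde F(\tilde q_0)$ from below corresponds, via $\widetilde F^{-1}$, to the connected component of $\widetilde M^+\setminus C(\tilde p)$ from which $\tilde\sigma^\uparrow$ emerges at $\tilde q_0$, and that minimizing geodesics from $\tilde p$ to a sequence of points in this component converging to $\tilde q_0$ necessarily converge to $\tilde\sigma^\uparrow$ rather than to some other minimizing geodesic from $\tilde p$ to $\tilde q_0$.
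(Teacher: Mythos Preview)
Your proposal is correct and rests on the same two ingredients as the paper's proof: (i) the far-side hypothesis forces the minimizing geodesics $\tilde\sigma^s$ from $\tilde p$ to $\widetilde F^{-1}(F(\sigma(s)))$ to converge to $\tilde\sigma^\uparrow$ as $s\to t_0^-$, and (ii) Alexandrov convexity for $\triangle op\sigma(s)$ gives $L_o(\sigma(u))\geq L_{\tilde o}(\tilde\sigma^s(u))$ for $u\in[0,s]$, which passes to the limit. The paper's proof consists of exactly these two sentences.

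Your route is considerably more elaborate than necessary. Invoking the Proposition at the end of Section~2 to convert the hypothesis into the differential inequality, introducing the angle function $\Phi$, and arguing monotonicity of $\phi(s)=\Phi(F(\sigma(s)))$ are all superfluous: once you know $\tilde\sigma^s\to\tilde\sigma^\uparrow$, you can simply fix $u\in[0,t_0)$, pick any $s\in(u,t_0)$ close to $t_0$, apply Alexandrov convexity directly to get $L_o(\sigma(u))\geq L_{\tilde o}(\tilde\sigma^s(u))$, and let $s\to t_0^-$. Your intermediate step of comparing $\tilde\sigma_{\phi(s)}$ with $\tilde\sigma^\uparrow$ via non-crossing of integral curves is correct but avoidable---the limit already delivers the inequality with $\tilde\sigma^\uparrow$ on the right. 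What you flag as the ``main technical obstacle'' (identifying the component of $\widetilde M^+\setminus C(\tilde p)$ near $\tilde q_0$ so that minimizing geodesics converge to $\tilde\sigma^\uparrow$) is indeed the only genuine point, and both proofs handle it the same way.
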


\begin{proof}
Because $\sigma$ approaches $\tilde q_0$ from the far side, the minimizing geodesics $\tilde\sigma^t$ joining $\tilde p$ to $\widetilde F^{-1}(F(\sigma(t)))$ converge to $\tilde\sigma^\uparrow$ as $t$ approaches $t_0$.  Moreover, $L_o(\sigma(t)) = L_{\tilde o}(\tilde\sigma^t(t))\geq L_{\tilde o}(\tilde\sigma^\uparrow(t))$ for each $0<t<t_0$ because of Alexandrov convexity.
\end{proof}

\section{Triangle Comparison}

Here we will  prove that if the model surface $\widetilde M$ has weaker radial attraction than $M$ and if  every minimizing geodesic emanating from $p$ has no bad encounters with the cut locus of $\tilde p$ in $\widetilde M$, then for every $q \in M$ and geodesic triangle $\triangle opq$ in $M$, there exists a corresponding geodesic triangle $\triangle \tilde o\tilde  p\tilde q$ in $\widetilde M$ satisfying:
\begin{enumerate}
\item  
$ \displaystyle d(o,p) = d(\tilde o,\tilde p),    d(o,q) = d(\tilde o,\tilde q), d(p,q) = d(\tilde p,\tilde q);  $
 
\item 
$ \displaystyle d(\tilde o,\tilde \sigma(t)) \leq d(o,\sigma(t))\enspace  \forall t \in [0, d(p,q)]; $

\item 
$ \displaystyle  \measuredangle \tilde p \leq \measuredangle  p, \enspace\measuredangle \tilde q \leq \measuredangle  q;$
\item $ \displaystyle\measuredangle \tilde o \leq \measuredangle  o;$
\item $ \displaystyle d(\tilde p,\tilde \gamma(s)) \leq d(p,\gamma(s))\enspace  \forall s \in [0, d(o,q)].$
\end{enumerate}

\begin{remark}
The point $\tilde p$ is chosen on the 0--meridian so that $d(o,p) = d(\tilde o,\tilde p)$ is automatic.  The point $\tilde q \in \widetilde M^+$ satisfying (1) exists and is unique if and only if  $F(q)$ lies in the image of $\widetilde F$.
\end{remark}

As in previous sections $d(o,p)$ will be denoted by $r_0$. The proof will be broken down into a sequence of lemmas.

\begin{lemma}\label{l:bdry}
 Given a geodesic triangle $\triangle opq$ in $M$ with  $F(q) \in \widetilde F(\widetilde M)$, suppose the reference point $F(q) = (x,y)$  lies on the boundary, that is, either
$ x+y = r_0 $, $ y-x = - r_0 $ or $ y-x = r_0$. Then there exists a corresponding triangle $\triangle \tilde o\tilde p \tilde q$ satisfying (1), (2), (3), (4) and (5).
\end{lemma}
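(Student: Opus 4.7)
The plan is a case analysis on which of the three boundary equations $x + y = r_0$, $y - x = -r_0$, or $y - x = r_0$ holds for $F(q) = (x,y)$. Since $\widetilde F \colon \widetilde M^+ \to \widetilde F(\widetilde M)$ is a homeomorphism, the point $\tilde q \in \widetilde M^+$ with $\widetilde F(\tilde q) = F(q)$ is unique, which immediately gives (1). Each boundary condition translates into equality in the triangle inequality in $\widetilde M$ for an appropriate ordering of $\tilde o, \tilde p, \tilde q$, forcing $\tilde q$ onto the full meridian through $\tilde o$ and $\tilde p$: on the segment between $\tilde o$ and $\tilde p$ in Case A ($x+y=r_0$), on the opposite meridian $\theta = \pi$ in Case B ($y-x=-r_0$), and past $\tilde p$ on the zero meridian in Case C ($y-x=r_0$). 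The sides $\tilde\sigma, \tilde\gamma, \tilde\tau$ of the comparison triangle are then taken to be the corresponding meridian subarcs; each is minimizing because its length equals the distance between its endpoints.

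To verify (2) and the two inequalities in (5), I would compute $d(\tilde o, \tilde\sigma(t))$, $d(\tilde p, \tilde\gamma(s))$, and $d(\tilde q, \tilde\tau(s))$ explicitly as piecewise linear functions of the parameter in each of the three cases, then apply the triangle inequality in $M$ together with the minimizing property of $\sigma, \gamma,$ and $\tau$. For example, in Case A, $d(\tilde o, \tilde\sigma(t)) = r_0 - t$ for $t \in [0, d(p,q)]$, and from $r_0 = d(o,p) \leq d(o,\sigma(t)) + d(\sigma(t),p) = d(o,\sigma(t)) + t$ one obtains $d(\tilde o, \tilde\sigma(t)) \leq d(o,\sigma(t))$. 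In Case B, $d(\tilde o, \tilde\sigma(t)) = |r_0 - t|$ and two triangle inequalities are needed, one for $t \leq r_0$ and one for $t \geq r_0$. The remaining inequalities for $\tilde\gamma$ and $\tilde\tau$ are analogous.

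The angle inequalities (3) and (4) follow from the observation that in a degenerate meridian triangle exactly two of the three angles equal $0$ and the third equals $\pi$; the angle $\pi$ occurs at $\tilde q$ in Case A, at $\tilde o$ in Case B, and at $\tilde p$ in Case C, precisely at the vertex where the triangle inequality in $M$ is also sharp. At that vertex the concatenation of the two corresponding minimizing sides in $M$ has length equal to the distance between its endpoints, hence is a minimizing path and so a smooth geodesic, which forces the angle in $M$ there to equal $\pi$ as well. At the other two vertices the comparison angle is $0$ and the inequality is trivial. The proof is essentially bookkeeping; the only step needing more than the triangle inequality is this standard fact that a broken minimizing curve is in fact a smooth geodesic.
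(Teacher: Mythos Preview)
Your proposal is correct and follows essentially the same case analysis as the paper's proof: split on which boundary equation holds, note that the comparison triangle degenerates onto the meridian through $\tilde o$ and $\tilde p$, and read off the angles and convexity inequalities. The only cosmetic difference is that the paper first observes that the corresponding concatenation of sides in $M$ is itself a minimizing geodesic, and therefore obtains \emph{equalities} such as $d(\tilde o,\tilde\sigma(t)) = r_0 - t = d(o,\sigma(t))$ in Case~A, whereas you derive the convexity as an inequality via the triangle inequality and invoke the minimizing-concatenation argument only for the angle $\pi$; both routes are valid and equally short.
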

\begin{proof}
Consider each case separately.  Assume first that $x+y=r_0$.  Then $ \sigma \cdot \gamma^{-1} $ forms a minimizing geodesic joining $p$ to $o$, and $\tilde q$ lies on $\tilde \tau$ joining $\tilde o$ to $\tilde p$.  Hence $\measuredangle \tilde p = 0 \leq \measuredangle p$,  $\measuredangle \tilde o = 0 \leq \measuredangle o$, and  $\measuredangle \tilde q = \pi = \measuredangle q$.
Moreover,  $d(\tilde o, \tilde\sigma(t)) = r_0 - t = d(o,\sigma(t))$ and $d(\tilde p,\tilde\gamma(s)) = r_0 -s = d(p,\gamma(s))$.

Assume next that $y = x-r_0$.  Then $\tau^{-1} \cdot \gamma$ is a minimizing geodesic joining $p$ to $q$ and $\tilde o$ lies on $\tilde \sigma$ joining $\tilde p$ to $\tilde q$.  Hence $\measuredangle \tilde p = 0 \leq \measuredangle p$,  $\measuredangle \tilde o = \pi = \measuredangle o$, and  $\measuredangle \tilde q =  0 \leq \measuredangle q$. Moreover,  by the triangle inequality, $d(\tilde o, \tilde\sigma(t)) = |r_0 - t| \leq d(o,\sigma(t))$ and $d(\tilde p,\tilde\gamma(s)) = r_0 + s = d(p,\gamma(s))$.

Lastly assume $y = x+ r_0$.  Thus $ \tau \cdot \sigma$ is a minimizing geodesic joining $o$ to $q$ and $\tilde p$ lies on  $\tilde \gamma$ joining $\tilde o$ to $\tilde q$. Hence $\measuredangle \tilde p = \pi = \measuredangle p$,  $\measuredangle \tilde o = 0 \leq \measuredangle o$, and  $\measuredangle \tilde q =  0 \leq \measuredangle q$. Moreover,  by the triangle inequality, $d(\tilde o, \tilde\sigma(t)) = r_0 + t = d(o,\sigma(t))$ and $d(\tilde p,\tilde\gamma(s)) =| r_0 -s| \leq d(p,\gamma(s))$.
\end{proof}

\begin{lemma}\label{l:degen}
Given a geodesic triangle $\triangle opq$ in $M$ with  $F(q)=(x,y) \in \widetilde F(\widetilde M)$, suppose  that $F(\sigma(t)) = (x',y')$ such that either $x'+y' = r_0$ or $y-x = y'-x'$ for some $t \in (0,d(p,q))$. Then there exists a corresponding triangle $\triangle \tilde o\tilde p \tilde q$ satisfying (1), (2), (3), (4) and (5).
\end{lemma}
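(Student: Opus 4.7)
\medskip
\noindent\textbf{Proof plan.} The plan is to reduce both cases to Lemma \ref{l:bdry} by showing that under either hypothesis the reference point $F(q)$ must in fact lie on the boundary of the oblique strip containing $F(M)$, i.e., $F(q)$ satisfies one of $x+y=r_0$, $y-x=r_0$, or $y-x=-r_0$. The basic observation used throughout is that $d(p,\sigma(s))=s$ (since $\sigma$ is unit-speed from $p$), while the triangle inequality $|d(o,\sigma(s_2))-d(o,\sigma(s_1))|\le |s_2-s_1|$ forces $s\mapsto d(p,\sigma(s))+d(o,\sigma(s))$ to be non-decreasing and $s\mapsto d(o,\sigma(s))-d(p,\sigma(s))$ to be non-increasing, both with value $r_0$ at $s=0$.

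In the case $x'+y'=r_0$ at some $t$, the non-decreasing function $x'(s)+y'(s)$ is bounded below by $r_0$ (triangle inequality) and equals $r_0$ at $s=0$ and $s=t$, hence is identically $r_0$ on $[0,t]$. Therefore $\sigma(s)$ lies on a minimizing path from $p$ to $o$ for each $s\in[0,t]$. Concatenating $\sigma|_{[0,t]}$ with any minimizing geodesic $\eta$ from $\sigma(t)$ to $o$ then produces a length minimizer from $p$ to $o$, which must be a smooth geodesic; thus $\sigma'(t)=\eta'(0)$, and the geodesic $\sigma$ continues past $t$ along $\eta$, reaching $o$ at parameter $r_0$. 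A short case split on whether $d(p,q)\ge r_0$ (forcing $y=x-r_0$) or $d(p,q)<r_0$ (forcing $x+y=r_0$) then places $F(q)$ on the strip boundary.

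In the case $y-x=y'-x'$ at some $t$, the non-increasing function $y'(s)-x'(s)$ takes the value $y-x$ at both $s=t$ and $s=d(p,q)$, so it is constantly $y-x$ on $[t,d(p,q)]$. Hence $\sigma(s)$ lies on a minimizing path from $o$ to $q$ for each such $s$. An analogous concatenation argument shows that a minimizing geodesic $\eta$ from $o$ to $\sigma(t)$ joins $\sigma|_{[t,d(p,q)]}$ without a corner, making $\sigma'(t)$ the radial outward direction from $o$ at $\sigma(t)$. Reversing $\sigma|_{[0,t]}$ then traces that radial geodesic back toward $o$: if $t\le y'$ the reversed segment terminates at $p$ at distance $y'-t=y-x$ from $o$, giving $r_0=d(o,p)\le y-x$ and hence $y-x=r_0$; if $t>y'$ the reversed segment passes through $o$ and continues for another $t-y'=x-y$ before reaching $p$, giving $r_0\le x-y$ and hence $y-x=-r_0$. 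Either sub-case places $F(q)$ on the strip boundary.

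Since in every scenario $F(q)$ lands on the strip boundary, and by assumption $F(q)\in \widetilde F(\widetilde M)$, Lemma \ref{l:bdry} supplies an Alexandrov triangle $\triangle\tilde o\tilde p\tilde q$ corresponding to $\triangle opq$ and satisfying (1)--(5). The step I expect to be the main obstacle is the clean justification that the concatenated two-piece path has no corner and is therefore a single smooth geodesic; this is handled by the fundamental fact that any curve achieving the distance between its endpoints in a Riemannian manifold is a smooth geodesic, and it is precisely this regularity that forces $F(q)$ onto the boundary.
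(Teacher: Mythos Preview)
Your proposal is correct and follows the same strategy as the paper: reduce to Lemma \ref{l:bdry} by showing that $F(q)$ must lie on the boundary of the strip. The paper's own argument is terser---it simply notes that $x'+y'=r_0$ forces $\sigma(t)$ onto a minimizer from $p$ to $o$, and then asserts the dichotomy ``either $q$ lies between $p$ and $o$ or $o$ lies on $\sigma$'' without spelling out why $\sigma$ itself must coincide with that minimizer; your concatenation argument (minimizing path $\Rightarrow$ smooth geodesic $\Rightarrow$ $\sigma'(t)=\eta'(0)$ $\Rightarrow$ uniqueness) is exactly the justification the paper leaves implicit. The monotonicity observations about $s\mapsto x'(s)+y'(s)$ and $s\mapsto y'(s)-x'(s)$ are not strictly needed (the single equality at $t$ already suffices for the concatenation step), but they do no harm.
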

\begin{proof}
We will show that the hypothesis implies that either $ x+y = r_0 $, $ y-x = - r_0 $ or $ y-x = r_0$ and then apply Lemma \ref{l:bdry}.  First suppose $x^\prime + y^\prime = r_0$, that is, $d(p,\sigma(t)) + d(\sigma(t),o) = d(p,o)$. Then $\sigma(t)$ lies on a minimizing geodesic joining $o$ to $p$.  Thus either $q$ lies between $p$ and $o$ on this geodesic, that is, $x+y= r_0$, or $o$ lies on $\sigma$, that is $ x - y = r_0$.  Next suppose  $y-x = y^\prime - x^\prime$, that is, $d(o,q) = d(o,\sigma(t))+d(\sigma(t),q)$.  Then $\sigma(t)$ lies on a minimizing geodesic joining $o$ to $q$. Thus either, $p$ lies between $o$ and $q$ on this geodesic, that is, $y - x = r_0$, or $o$ lies on $\sigma$, that is, $ x = r_0 + y$.
\end{proof}

The two preceding  lemmas treat the degenerate cases. They require  neither the hypothesis of   weaker radial attraction nor that of  no bad encounters. The following lemma is where these hypotheses are employed.

\begin{lemma}\label{l:nobad}
Assume that $\widetilde M$ has weaker radial attraction than $M$.  Given a geodesic triangle $\triangle opq$ in $M$ with  $F(q) \in \widetilde F(\widetilde M)$, suppose that $\sigma$ has no bad encounters with the cut locus of $\tilde p$. Then there exists a triangle $\triangle\tilde o\tilde p\tilde q$ in $\widetilde M$ satisfying (1),  (2), and (3).
\end{lemma}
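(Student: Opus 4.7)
The plan is to apply the Proposition of Section 2, which reduces the existence of a corresponding Alexandrov triangle for every subtriangle $\triangle op\sigma(t)$ (and hence for $\triangle opq$) to the slope-field differential inequality
\[
(L_o\circ\sigma)_+^\prime(t)\leq\mathfrak{s}(F(\sigma(t))),\qquad 0<t<d(p,q).
\]
Lemmas \ref{l:bdry} and \ref{l:degen} dispose of the degenerate configurations where $F(q)$ lies on the boundary of $\widetilde F(\widetilde M)$ or $F\circ\sigma$ meets such a boundary, so in what follows I assume $F\circ\sigma$ stays in the interior of $\widetilde F(\widetilde M^+)$, where $\mathfrak{s}$ is unambiguously defined along it. Once (1) and (2) are obtained via the Section 2 Proposition, (3) follows from (1), (2) by \cite[Lemma 4.6]{HI} as in the remark after the definition of Alexandrov triangle; the boundary cases $d(o,p)=\ell$ or $d(o,q)=\ell$ are absorbed by approximation from the interior.

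For the pointwise verification of the differential inequality at a fixed $t_0\in(0,d(p,q))$, I split according to whether $F(\sigma(t_0))$ lies on $\widetilde F(C(\tilde p))$. At a cut point, the definition of $\mathfrak{s}$ at cut points gives $\mathfrak{s}(F(\sigma(t_0)))=(L_{\tilde o}\circ\alpha)_+^\prime(t_0)$, where $\alpha$ is the cut-locus arc through $\widetilde F^{-1}(F(\sigma(t_0)))$; the hypothesis of no bad encounter at $t_0$, combined with the remark following Lemma \ref{l:4}, yields the inequality directly. At a non-cut point there is a unique minimizing geodesic $\tilde\sigma=\tilde\sigma_{\phi_{t_0}}$ in $\widetilde M$ from $\tilde p$ through $\widetilde F^{-1}(F(\sigma(t_0)))$, and $\mathfrak{s}(F(\sigma(t_0)))=(L_{\tilde o}\circ\tilde\sigma)^\prime(t_0)$; the required inequality $(L_o\circ\sigma)_+^\prime(t_0)\leq(L_{\tilde o}\circ\tilde\sigma)^\prime(t_0)$ must be extracted from weaker radial attraction.

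The principal obstacle is the non-cut case, because weaker radial attraction is only an infinitesimal comparison valid under matching initial radial derivatives, whereas here at $t_0$ only the distances $L_o(\sigma(t_0))=L_{\tilde o}(\tilde\sigma(t_0))$ are known to agree. I propose to bridge this gap by a closed-open continuity argument on the set
\[
E=\{T\in(0,d(p,q)]:\triangle op\sigma(t)\text{ admits an Alexandrov triangle for every }t\in(0,T]\}.
\]
A direct infinitesimal application of weaker radial attraction at $p$ places small initial $T$ in $E$; $E$ is closed by continuity of the lowermost shooting angle $\phi_s$ outside cut-locus encounters together with passage to the limit of Alexandrov triangles. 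For openness at a non-cut point $t_0\in E$, Alexandrov convexity on $[0,t_0]$ gives $L_o\circ\sigma-L_{\tilde o}\circ\tilde\sigma_{\phi_{t_0}}\geq 0$ on $[0,t_0]$ with equality at $t_0$; taking the left derivative at $t_0$ and invoking \cite[Corollary 2.3]{HI} yields the differential inequality at $t_0$, after which weaker radial attraction applied at $t_0$ propagates the comparison $L_o\circ\sigma\leq L_{\tilde o}\circ\tilde\sigma_{\phi_{t_0}}$ onto a right neighborhood of $t_0$; combined with Alexandrov convexity on $[0,t_0]$, this produces Alexandrov triangles for $\triangle op\sigma(t)$ with $t$ slightly greater than $t_0$. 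Openness across a cut-locus encounter reduces to the no bad encounter case already treated. Connectedness of $(0,d(p,q)]$ then forces $E=(0,d(p,q)]$, and the Section 2 Proposition delivers the Alexandrov triangle satisfying (1), (2), and hence (3).
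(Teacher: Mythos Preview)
Your overall architecture---reduce to the slope-field differential inequality via the Section~2 Proposition, treat cut encounters by the ``not bad'' hypothesis, and run a connectedness argument---is reasonable, and the verification of the differential inequality at a point $t_0$ already known to lie in $E$ is correct. The genuine gap is in the openness step. From $L_o\circ\sigma\geq L_{\tilde o}\circ\tilde\sigma_{\phi_{t_0}}$ on $[0,t_0]$ and $L_o\circ\sigma\leq L_{\tilde o}\circ\tilde\sigma_{\phi_{t_0}}$ on $[t_0,t_0+\epsilon)$ you correctly deduce $\phi_t\leq\phi_{t_0}$ for $t\in(t_0,t_0+\epsilon)$, and hence Alexandrov convexity $L_{\tilde o}\circ\tilde\sigma_{\phi_t}(s)\leq L_o\circ\sigma(s)$ holds for $s\in[0,t_0]$. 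But for $s\in(t_0,t)$ you have only the two inequalities $L_{\tilde o}\circ\tilde\sigma_{\phi_t}(s)\leq L_{\tilde o}\circ\tilde\sigma_{\phi_{t_0}}(s)$ and $L_o\circ\sigma(s)\leq L_{\tilde o}\circ\tilde\sigma_{\phi_{t_0}}(s)$, which do not combine to yield $L_{\tilde o}\circ\tilde\sigma_{\phi_t}(s)\leq L_o\circ\sigma(s)$. Equivalently, in the differential-inequality formulation, your argument establishes $(L_o\circ\sigma)_+'(t_0)\leq\mathfrak s(F(\sigma(t_0)))$ only at $t_0$ itself, using that $t_0\in E$; to push $E$ past $t_0$ you would need the inequality at points $s>t_0$ not yet known to be in $E$, and nothing you wrote supplies that. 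The same difficulty recurs at a cut-locus encounter: ``not bad'' gives the inequality at the encounter point, not openness beyond it.

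The paper avoids this obstacle by parametrizing over the angle $\phi$ rather than over $t$: it sets $\bar\phi=\sup\{\phi: f_\phi'(0)<f'(0)\text{ and }f_\phi<f\text{ strictly on }(0,d(p,q))\}$ and argues by contradiction that $\bar\phi=\measuredangle\tilde p$. If $\bar\phi<\measuredangle\tilde p$, then $f_{\bar\phi}\leq f$ with a touching point (either an interior tangency or matching initial slope). The crucial ingredient you did not invoke is the rigidity result Proposition~\ref{p:geodcomp}: under weaker radial attraction, if $L_{\tilde o}\circ\tilde\sigma\leq L_o\circ\sigma$ on an interval and they agree at one interior point (or agree to first order at an endpoint), they must coincide on the whole interval. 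This forces $f\equiv f_{\bar\phi}$ on the initial geodesic segment of $\varsigma_{\bar\phi}$, which then collides either with the no-bad-encounter hypothesis (if the segment ends at a cut point) or with the strict nondegeneracy $f(\check t_{\bar\phi})>f_{\bar\phi}(\check t_{\bar\phi})$. Your openness step can in fact be repaired, but only by importing exactly this rigidity proposition to handle the case of equality in the derivative comparison at $t_0$; as written the argument is incomplete.
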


\begin{proof}
Because $ F(q)  \in \widetilde F(\widetilde M)$,  there exists a unique $\tilde q \in \widetilde M^+$ such that  $\widetilde F(\tilde q) = F(q)   =(x_0,y_0)$.
By Lemma \ref{l:degen} we may assume that  for all  $t \in (0, d(p,q))$,
\begin{equation} \label{e:nondeg}
 x'+y' > r_0 \enspace \mathrm{and}\enspace y_0-x_0 < y' - x'
 \end{equation}
where $F(\sigma(t) )= (x',y')$. 

 Choose  $\tilde \sigma$ to be the unique minimizing geodesic joining $\tilde p$ to $\tilde q$ if $\tilde q \notin C(\tilde p)$ or
 lowermost one if $\tilde q \in C(\tilde p)$.  Let $\tilde \gamma$  be the arc of the meridian joining $\tilde o$ to  $\tilde q$. These choices  determine the triangle $\triangle\tilde o\tilde p\tilde q$. We must prove $ d(\tilde o,\tilde \sigma(t)) \leq d(o,\sigma(t))$ for all  $t \in [0, d(p,q)] $.

Set $f(t) = d(o,\sigma(t))$ for $ 0 \leq t \leq d(p,q)$, and define a family of functions $f_\phi(t)$ for $0 \leq t \leq d(p,q)$ where $0 \leq \phi \leq \measuredangle \tilde p$ as follows: Given $0 \leq \phi \leq \measuredangle \tilde p$, consider the curve $\varsigma_\phi$ previously described  in Definition \ref{d:varsigmaphi}.  This curve initially emanates from $\tilde p$ travelling along the geodesic $\tilde \sigma_\phi$ until, if and when, it meets a cut point at $t= \tau_\phi$, after which  it travels along the arc of  the cut locus joining that point to the trunk.
By construction, the lowermost geodesic $\tilde\sigma$ joining $\tilde p$ to $\tilde q$, satisfies
$\tilde\sigma(t) = \varsigma_{\measuredangle \tilde p}(t)= \sigma_{\measuredangle \tilde p}(t)$
for $0 \leq t \leq d(p,q)$.  Consequently, if $0\leq \phi < \measuredangle\tilde p$, then the curve $\widetilde F(\varsigma_\phi)$ is below $\widetilde F(\tilde\sigma)$ on the interval $[0,d(p,q)]$, so that there exists a parameter value $\check t_\phi \in (0,d(p,q))$ where $\widetilde F(\varsigma_\phi)$ crosses the line $y-x = y_0-x_0 = d(0,q)- d(p,q)$ in the reference space $\widetilde F(\widetilde M)$. This leads to the definition for each $ 0 \leq \phi \leq\measuredangle \tilde p$,
\begin{equation*}
 f_\phi (t) =
\left\{
\begin{array}{ll}
  d(\tilde o,\varsigma_\phi(t)) & \mathrm{if\enspace} 0\leq t \leq \check t_\phi    \\
t+d(o,q)-d(o,p)  &  \mathrm{if\enspace} \check t_\phi \leq t \leq d(p,q).    
\end{array}
\right.
\end{equation*}
Thus $f_\phi(t)$ is continuous in $\phi$ and $t$.
Moreover  $f_{\measuredangle \tilde p}(t) = d(\tilde o, \tilde\sigma(t))$.  Therefore our goal is to prove   $f_{\measuredangle \tilde p}(t) \leq f(t)$ for $ 0 \leq t \leq d(p,q)$. 

By equation (\ref{e:nondeg}), we have that $f_0 (t) < f(t)$ for $0< t < d(p,q)$, 
$f_0^\prime(0) = -1 < f^\prime(0)$,
and $f_\phi(t) < f(t)$ for $\check t_\phi \leq t < d(p,q)$ and $0 \leq \phi <\measuredangle \tilde p$.

Set 
$$ \bar \phi = \sup \{ 0 \leq \phi  \leq \measuredangle \tilde p : f_\phi^\prime(0) < f^\prime(0)\enspace\mathrm{and}\enspace f_\phi(t) < f(t) \enspace\mathrm{for}\enspace 0< t < d(p,q)\}.$$
If $ \bar \phi = \measuredangle \tilde p$, then $f_{\measuredangle\tilde p} \leq f$ which proves property (2). So suppose that $ \bar\phi < \measuredangle\tilde p$. Then
by continuity and compactness,    $f_{\bar\phi} (t)\leq f(t)$ for all $t$ and either $f_{\bar\phi}^\prime(0) = f^\prime(0)$ or there exists a $0 < \bar t<  d(p,q)$ such that $f_{\bar\phi}(\bar t) = f(\bar t)$.  Were such a $\bar t$ to exist, we would have $0< \bar t < \check t_{\bar \phi}$ because of (\ref{e:nondeg}). Thus either
$\varsigma_{\bar\phi}(\bar t)$ is a cut point of $C(\tilde p)$, or it lies on $\tilde\sigma_\phi$.
 In the first case, $\sigma$ has an encounter with $C(\tilde p)$ at the parameter $\bar t$. Because there are no bad encounters, $f(t) \leq f_{\bar\phi}(t)$ for all $ \bar t \leq t \leq \check t_{\bar\phi}$ by Proposition \ref{p:badencounter} which contradicts $f(\check t_{\bar\phi}) > f_{\bar\phi}(\check t_{\bar\phi})$.  
Thus $\bar t$ must be a parameter value along $\tilde\sigma_{\bar\phi}$.  But then applying  Proposition \ref{p:geodcomp}(3)  with $\sigma$ and $\tilde\sigma_{\bar\phi}$, it follows
that $ f(t) = f_{\bar\phi}(t)$ for all $0 \leq t \leq \min( \tau_{\bar\phi}, \check t_{\bar\phi})$.
Depending upon which of  $\tau_{\bar\phi}$ or $ \check t_{\bar\phi}$ is the smaller, this leads either to an encounter with the cut locus at $t= \tau_{\bar\phi}$ which leads to a contradiction as above or to
$f(\check t_{\bar\phi}) = f_{\bar\phi}(\check t_{\bar\phi})$ which is impossible.  
Hence there is no such $\bar t$, and we would have the case $f_{\bar\phi}^\prime(0) = f^\prime(0)$.  
But by Proposition \ref{p:geodcomp}(1), this again leads to $ f(t) = f_{\bar\phi}(t)$ for all $0 \leq t \leq \min( \tau_{\bar\phi}, \check t_{\bar\phi})$ which we just saw was impossible. This completes the proof of property (2).

The angle comparison (3)  follows from Alexandrov convexity from $o$ by \cite[Lemma 4.6]{HI}.
\end{proof}

\begin{lemma}\label{l:5.5}
Given a geodesic triangle $\triangle opq$ in $M$, let $\gamma :[0, d(o,q)] \rightarrow M$ be the side joining $o$ to $q$, and set $q_s=\gamma(s)$.  Consider the family of triangles of the form $\triangle opq_s$ for  $ s \in (0,d(o,q)]$.  Suppose each such $\triangle opq_s$ has a corresponding triangle $\triangle \tilde o\tilde p \tilde q_s$ satisfying (1), (2), and (3).  Then there exists a triangle $\triangle \tilde o\tilde p\tilde q$ in $\widetilde M$ satisfyling (1), (2), (3), (4), and (5). 
\end{lemma}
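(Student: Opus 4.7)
The plan is to identify the candidate as $\triangle\tilde o\tilde p\tilde q$ with $\tilde q=\tilde q_{d(o,q)}$, for which properties (1), (2), (3) come directly from the hypothesis at $s=d(o,q)$, and then to derive (4) and (5) from a single monotonicity of the polar angle. Writing $\tilde q_s=(s,\theta(s))$ in polar coordinates about $\tilde o$ with $\theta(s)\in[0,\pi]$, the function $\theta$ is continuous, since $\tilde q_s=\widetilde F^{-1}(d(p,q_s),s)$ and $\widetilde F$ is a homeomorphism on $\widetilde M^+$; moreover $\theta(s)$ is by construction equal to the angle $\measuredangle\tilde o$ in $\triangle\tilde o\tilde p\tilde q_s$. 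The key claim I would prove is that $\theta$ is non-increasing on $(0,d(o,q)]$.

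To establish this claim, I would differentiate the identity $d(\tilde p,\tilde q_s)=d(p,q_s)$ supplied by property (1) of $\triangle\tilde o\tilde p\tilde q_s$. The first variation in $M$, applied with the chosen side $\sigma_s$ of $\triangle opq_s$, yields $\tfrac{d}{ds}\bigl|_+\,d(p,q_s)=\cos\measuredangle q_s$. In $\widetilde M$, writing the velocity of $s\mapsto\tilde q_s$ at $(s,\theta(s))$ in the orthonormal frame as $\partial_r+y(s)\theta'_+(s)\hat{\partial}_\theta$ and applying first variation to $L_{\tilde p}$ along the lowermost minimizing geodesic from $\tilde p$ to $\tilde q_s$, one gets $\tfrac{d}{ds}\bigl|_+\,d(\tilde p,\tilde q_s)=\cos\measuredangle\tilde q_s+y(s)\sin(\measuredangle\tilde q_s)\,\theta'_+(s)$. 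Equating and solving,
\[
\theta'_+(s)=\frac{\cos\measuredangle q_s-\cos\measuredangle\tilde q_s}{y(s)\,\sin\measuredangle\tilde q_s}\leq 0,
\]
the inequality being property (3) ($\measuredangle\tilde q_s\leq\measuredangle q_s$ so $\cos\measuredangle\tilde q_s\geq\cos\measuredangle q_s$) together with positivity of the denominator. Degenerate configurations where $\sin\measuredangle\tilde q_s=0$ collapse to the boundary cases already covered by Lemma \ref{l:bdry}.

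With the monotonicity in hand, (5) is immediate from the standard fact that on a simply connected model surface of revolution $\theta\mapsto d(\tilde p,(s,\theta))$ is monotone non-decreasing on $[0,\pi]$: since $\theta(s)\geq\theta(d(o,q))$ for $s\leq d(o,q)$,
\[
d(\tilde p,\tilde\gamma(s))=d(\tilde p,(s,\theta(d(o,q))))\leq d(\tilde p,(s,\theta(s)))=d(\tilde p,\tilde q_s)=d(p,q_s).
\]
For (4), I would examine $s\to 0^+$: the Riemannian law of cosines around $\tilde o$ gives $d(\tilde p,\tilde q_s)=r_0-s\cos\theta(s)+O(s^2)$, while first variation of $L_p$ at $o$ along $\gamma'(0)$ yields $d(p,q_s)\leq r_0-s\cos\measuredangle o+o(s)$; matching these via property (1) forces $\limsup_{s\to 0^+}\theta(s)\leq\measuredangle o$, and combined with monotonicity this gives $\measuredangle\tilde o=\theta(d(o,q))\leq\lim_{s\to 0^+}\theta(s)\leq\measuredangle o$.

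The main technical obstacle is the derivative calculation across cut points: if $q_s$ is a cut point of $p$ in $M$, or $\tilde q_s$ of $\tilde p$ in $\widetilde M$, the first variation supplies only one-sided inequalities and the lowermost geodesic in $\widetilde M$ need not realize the minimum appearing in the Dini derivative of $d(\tilde p,\cdot)$. My plan is to work with right-hand Dini derivatives throughout, verify the infinitesimal bound $\theta'_+\leq 0$ on the dense open complement of the cut-point set, and then use continuity of $\theta$ to upgrade this pointwise bound to genuine monotonicity on the whole interval.
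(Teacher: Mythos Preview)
Your overall strategy matches the paper's: show that the polar angle $\theta(s)$ of $\tilde q_s$ is nonincreasing, then read off (4) from the limit $s\to 0^+$ and (5) from the monotonicity of $\theta\mapsto d(\tilde p,(s,\theta))$. Your derivations of (4) and (5) from monotonicity are fine.

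The gap is in your plan for establishing the monotonicity itself. You propose to verify $\theta'_+(s)\le 0$ on the dense open set of $s$ where neither $q_s\in C(p)$ nor $\tilde q_s\in C(\tilde p)$, and then ``use continuity of $\theta$ to upgrade this pointwise bound to genuine monotonicity.'' That inference is false: a continuous function can have nonpositive derivative on a dense open set of full complement and still increase (the Cantor staircase has derivative $0$ on a dense open set yet is strictly increasing). You would need either a Dini-derivative bound at \emph{every} $s$, or an a~priori absolute-continuity/Lipschitz property of $\theta$ to integrate an a.e.\ bound; neither is supplied. There is also a circularity hidden in your first-variation identity on the $\widetilde M$ side: writing the velocity of $s\mapsto\tilde q_s$ as $\partial_r+y(s)\theta'_+(s)\hat\partial_\theta$ already presupposes that $\theta'_+(s)$ exists, which is exactly what is in doubt at cut points.

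The paper avoids both problems by working with the lower-left Dini derivative $D_-\theta$ at \emph{every} $s_0$ and replacing pointwise differentiation by an integrated estimate. Assuming $D_-\theta(s_0)>0$, one has $\theta(s_0)-\theta(s_0-h)\ge c_0 h$ for small $h>0$; the key input is \cite[Corollary~3.4]{HI}, which converts this finite angular gap into a finite gap between $L_{\tilde p}(\mu(s_0-h))$ and $L_{\tilde p}(\tilde q_{s_0-h})$ along the meridian $\mu$ through $\tilde q_{s_0}$, without any differentiability of $\theta$. Dividing by $h$ and letting $h\to 0^+$ then yields $\cos\measuredangle\tilde q_{s_0}>\cos\measuredangle q_{s_0}$, contradicting (3). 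This argument is insensitive to whether $q_{s_0}$ or $\tilde q_{s_0}$ is a cut point, which is precisely the robustness your dense-open-set plan lacks.
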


\begin{remark}\label{r:5.6}
By Lemma \ref{l:nobad}, the hypothesis is satisfied if $\widetilde M$ has weaker radial attraction than $M$, $F(q_s) \in \widetilde F(\widetilde M)$ for all $s$, and every minimizing geodesic emanating from $p$ has no bad encounters with the cut locus of $\tilde p$.
\end{remark}

\begin{proof}
Set $\widehat\theta(s) = \theta(\widetilde F^{-1} (F(q_s)))$ where $\theta$ is the coordinate on $\widetilde M^+$.  It will be enough to show that $\widehat\theta$ is a nonincreasing function. Since $\widehat\theta$ is continuous,  this will be accomplished by showing that that the lower left Dini derivates of $\widehat\theta$ satisfy  $D_- \widehat\theta(s) \leq 0$ for all $0<s\leq d(o,q)$.
We will show the assumption $D_- \widehat\theta(s_0)= 2c_ 0>0$ for some $s_0$ leads to a contradiction.  Observe that $F(q_{s_0})$ does not lie on the upper left, or lower left boundary lines of $\widetilde F(\widetilde M)$ since then $\widehat\theta(s_0) =0$ which is the absolute minimum value of $\widehat\theta$ which would imply that $D_- \widehat\theta(s_0)\leq 0$.  Also $F(q_{s_0})$ does not lie on the  lower right boundary line, $y = x+r_0$ since then, by Lemma \ref{l:rectangle},  we would have $F(s)$  lying on that line for all $s \leq s_0$.   In other words $\widehat\theta(s) = \pi$ for all $ s \leq s_0$ which would imply the $D_-\widehat\theta(s_0)=0$.
This leaves two possibilities,  $F(s_0) = (x_0,y_0)$ satisfies  $-r_0 < y_0-x_0 < r_0$ and either
(i) $r_0 < x_0 + y_0 <2\ell - r_0$ or (ii) $ x_0+y_0 = 2\ell-r_0$ where $\ell \in (0,\infty]$ is the maximum radius of $\widetilde M$.

Now $D_-\widehat\theta(s_0) = 2c_0 >0$ implies that
$$ \frac {\widehat\theta(s_0-h) - \widehat\theta(s_0)}{-h} > c_0 $$
or equivalently
$ \widehat\theta(s_0) - \widehat\theta(s_0-h) > c_0 h$ for all sufficiently small $ h>0$.
Thus in case (i), \cite[Corollary 3.4]{HI} implies that there exists a $C> 0$ such that
\begin{equation}\label{e:5.1}
 L_{\tilde p}( \mu(s_0-h)) - L_{\tilde p}(\tilde q_{s_0-h}) \geq C( \widehat\theta(s_0) - \widehat\theta(s_0-h)) \geq C c_0 h 
 \end{equation} 
for all sufficiently small $h>0$, where $\mu$ is the meridian passing through $\tilde q_{s_0}$.
On the other hand, since $\mu(s_0) = \tilde q_{s_0}$, the left-hand side of  (\ref{e:5.1}) is equal to
\begin{eqnarray*}
& &  L_{\tilde p}( \mu(s_0-h)) - L_{\tilde p}(\mu(s_0)) - ( L_{\tilde p}(\tilde q_{s_0-h})-L_{\tilde p}(\tilde q_{s_0}))\\
&= &  L_{\tilde p}( \mu(s_0-h)) - L_{\tilde p}(\mu(s_0)) - ( L_{ p}( \gamma(s_0-h))-L_{ p}(\gamma(s_0))) 
\end{eqnarray*}
which combines with (\ref{e:5.1}) to obtain
$$
\frac{ L_{\tilde p}( \mu(s_0-h)) - L_{\tilde p}(\mu(s_0))}{-h} - \frac{ L_{ p}( \gamma(s_0-h))-L_{ p}(\gamma(s_0))}{-h} \leq -Cc_0. 
$$
Thus taking the limit as $h\rightarrow 0^+$ gives
$$ 
(L_{\tilde p}\circ \mu)_-^\prime(s_0) - (L_p\circ \gamma)_-^\prime(s_0) \leq - Cc_0 < 0.
$$ which is equivalent to
$$ \cos( \measuredangle \tilde q_{s_0}) - \cos(\measuredangle q_{s_0}) <0$$
contradicting $ \measuredangle \tilde q_{s_0} \leq \measuredangle q_{s_0}$, since cosine is strictly decreasing on $[0,\pi]$.

Case (ii) cannot occur.  If it could, then $\ell$ would have to be finite.   Thus we would have $\widehat\theta(s_0)=\pi$ and for all sufficiently small $h>0$, $\widehat\theta(s_0-h) <\widehat\theta(s_0)=\pi$. Hence $\tilde q_{s_o-h}$ would be in the interior of the reference space, but we have proved in case (i) that $\widehat\theta$ is a nonincreasing function at such points making it impossible for $\widehat\theta$ to increase to the value $\pi$. 
\end{proof}

\begin{lemma}\label{l:5.7}
Assume that $\widetilde M$ has weaker radial attraction than $M$, and that every minimizing  geodesic emanating from $p$ has no bad encounters with the cut locus of $\tilde p$.  Suppose the geodesic triangle $\triangle opq$ has $\measuredangle o < \pi$.  Then $ F(q) \in \widetilde F(\widetilde M)$, and  there exists a triangle $\triangle \tilde o\tilde p\tilde q$ in $\widetilde M$ satisfyling (1), (2), (3), (4), and (5).  
\end{lemma}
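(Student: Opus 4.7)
The plan is to bootstrap along the minimizing geodesic $\gamma$ from $o$ to $q$, using Lemma \ref{l:5.5} via Remark \ref{r:5.6} to propagate the existence of Alexandrov triangles along the family $q_s = \gamma(s)$. Define
\begin{equation*}
S = \{ s \in [0, d(o,q)] : F(q_{s'}) \in \widetilde{F}(\widetilde{M}) \text{ for all } s' \in [0, s]\}.
\end{equation*}
Once I show $S = [0, d(o,q)]$, Remark \ref{r:5.6} and Lemma \ref{l:5.5} immediately furnish the Alexandrov triangle $\triangle \tilde o \tilde p \tilde q$ for $\triangle opq$ satisfying all five properties.

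The set $S$ is nonempty ($0 \in S$ since $F(o) = (r_0, 0) = \widetilde F(\tilde o)$) and closed by continuity of $F\circ\gamma$ together with closedness of $\widetilde F(\widetilde M)$. A local analysis near $\tilde o$, together with the triangle inequality $d(p,q_s) \in [r_0 - s, r_0 + s]$, shows that sufficiently small $s$ lie in $S$, so $\bar s := \sup S > 0$. To establish $\bar s = d(o,q)$ by contradiction, assume $\bar s < d(o,q)$. Closedness yields $\bar s \in S$, and Remark \ref{r:5.6} with Lemma \ref{l:5.5} applied to the sub-family $\{\triangle opq_{s'}\}_{s' \in (0, \bar s]}$ produces an Alexandrov triangle $\triangle \tilde o \tilde p \tilde q_{\bar s}$ satisfying (1)--(5). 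Since $\gamma$ is fixed, the angle at $o$ is the same across all sub-triangles, so property (4) gives
\begin{equation*}
\measuredangle \tilde o_{\bar s} \leq \measuredangle o < \pi,
\end{equation*}
so $\tilde q_{\bar s}$ does not lie on the meridian opposite $\tilde p$. Corollary \ref{c:rauch1} also gives $\bar s = d(\tilde o, \tilde q_{\bar s}) \leq d(o, q) \leq \ell$, and since $\bar s < d(o,q) \leq \ell$, we have $\bar s < \ell$.

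To finish, I would show these strict bounds imply $F(q_s) \in \widetilde F(\widetilde M)$ for $s$ slightly greater than $\bar s$, contradicting $\bar s = \sup S$. Since $\widetilde F$ is a homeomorphism of $\widetilde M^+$ onto its image, $\partial \widetilde F(\widetilde M^+)$ is the $\widetilde F$--image of $\partial \widetilde M^+$, consisting of the two meridians at $\theta = 0$ and $\theta = \pi$. The image of the $\theta = 0$ meridian lies on the half-strip boundary lines $x+y = r_0$ and $y-x = r_0$, which $F(q_s)$ cannot cross outward by Lemma \ref{l:rectangle}. The image of the $\theta = \pi$ meridian is bounded away from $F(q_{\bar s})$ by the strict inequality $\measuredangle \tilde o_{\bar s} < \pi$ together with the fact that $\widetilde F$ is a homeomorphism. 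The top circle $r = \ell$ (in the compact case) is bounded away from $F(q_{\bar s})$ by $\bar s < \ell$. Continuity of $F \circ \gamma$ therefore yields $F(q_s) \in \widetilde F(\widetilde M)$ for $s$ sufficiently close to $\bar s$, contradicting $\bar s = \sup S$. Thus $\bar s = d(o, q)$ and $d(o,q) \in S$ by closedness.

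The main obstacle is the final step, where one must precisely identify the boundary of the reference space $\widetilde F(\widetilde M^+)$ and verify that the hypothesis $\measuredangle o < \pi$ (via property (4)) excludes the only boundary component---the image of the meridian opposite $\tilde p$---that is not automatically blocked by the triangle inequality in $M$.
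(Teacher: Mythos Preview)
Your argument is correct and follows the same approach as the paper: bootstrap along $\gamma$, apply Lemma \ref{l:5.5} (via Remark \ref{r:5.6}) at the supremal parameter to obtain $\measuredangle \tilde o \leq \measuredangle o < \pi$, and use this to show the reference image stays inside $\widetilde F(\widetilde M)$ a bit further, yielding a contradiction. The paper phrases the contradiction via $s_0 = \inf\{s : F(q_s) \notin \widetilde F(\widetilde M)\}$ rather than $\sup S$, and simply asserts that $\measuredangle \tilde o < \pi$ forces $F(q_{s_0})$ into the interior of $\widetilde F(\widetilde M)$; your explicit boundary decomposition (the $\theta=0$ meridian mapping to the strip edges, the $\theta=\pi$ meridian excluded by $\measuredangle \tilde o < \pi$, the pole excluded by $\bar s < \ell$) is a welcome unpacking of that step.
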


\begin{proof}
Let $\gamma$ be the side joining $o$ to $q$.  By  Lemma \ref{l:5.5} it suffices to prove that $F(\gamma(s)) =F( q_s) \in \widetilde F(\widetilde M)$ for all $s \in [0,d(o,q)]$.  If this is not true, let $s_0 = \inf \{ s : F(q_s) \notin 
\widetilde F(\widetilde M)\}$. By continuity of $F$, $F(q_s) \in \widetilde F(\widetilde M)$ for $0 \leq s \leq s_0$.  By Remark \ref{r:5.6} we may apply  Lemma \ref{l:5.5} to $\triangle opq_{s_0}$ to deduce that $\measuredangle \tilde o \leq \measuredangle o < \pi$.  It follows that $ F(q_{s_0})$ is in the interior of $\widetilde F(\widetilde M)$.  By continuity of $F$, there is an $\epsilon > 0$ such that $F(q_s)$ is in the interior of $\widetilde F(\widetilde M)$ for $| s-s_o| < \epsilon$ which contradicts the choice of $s_0$. 
\end{proof}

\begin{lemma}
Assume that $\widetilde M$ has weaker radial attraction than $M$, and that every minimizing  geodesic emanating from $p$ has no bad encounters with the cut locus of $\tilde p$.   Then $F(M) \subset \widetilde F(\widetilde M)$.
\end{lemma}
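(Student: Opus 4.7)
The plan is to reduce to Lemma \ref{l:5.7} via approximation, combined with the closedness of $\widetilde F(\widetilde M) \subset \mathbb{R}^2$.

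First I would show that $\widetilde F(\widetilde M)$ is closed in $\mathbb{R}^2$. If $\widetilde F(\tilde q_n) = (d(\tilde p, \tilde q_n), d(\tilde o, \tilde q_n)) \to (x, y)$, then $\{d(\tilde o, \tilde q_n)\}$ is bounded, and Hopf--Rinow applied to the complete manifold $\widetilde M$ yields a convergent subsequence $\tilde q_{n_k} \to \tilde q \in \widetilde M$ with $\widetilde F(\tilde q) = (x,y)$.

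Next, I fix $q \in M$ and aim to show $F(q) \in \widetilde F(\widetilde M)$. The trivial cases $q = o$ and $q = p$ give $F(q) = \widetilde F(\tilde o)$ and $F(q) = \widetilde F(\tilde p)$ directly. For $q \notin \{o, p\}$, let $\rho(t) := \exp_o(-t\,\tau'(0))$ denote the geodesic ray from $o$ opposite to $\tau$. If some minimizing geodesic $\gamma$ from $o$ to $q$ satisfies $\gamma'(0) \neq -\tau'(0)$, then in the triangle $\triangle opq$ formed with this $\gamma$ we have $\measuredangle o < \pi$, and Lemma \ref{l:5.7} yields $F(q) \in \widetilde F(\widetilde M)$.

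The remaining case is when every minimizing geodesic from $o$ to $q$ has initial direction $-\tau'(0)$. Uniqueness of geodesics with prescribed initial conditions then forces $q = \rho(d(o,q))$. I would approximate such $q$ by a sequence $q_n \to q$ lying off the ray $\rho([0, \infty))$, which is possible (assuming $\dim M \geq 2$) because the ray is a closed set of empty interior in $M$. For each $q_n$, any minimizing geodesic $\gamma_n$ from $o$ to $q_n$ must satisfy $\gamma_n'(0) \neq -\tau'(0)$, for otherwise $q_n = \gamma_n(d(o,q_n)) = \rho(d(o,q_n))$ would lie in $\rho([0, \infty))$, contradicting the choice. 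Hence the previous paragraph gives $F(q_n) \in \widetilde F(\widetilde M)$ for every $n$. Continuity of $F$ produces $F(q_n) \to F(q)$, and closedness of the image concludes $F(q) \in \widetilde F(\widetilde M)$.

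The main obstacle is this last degenerate case, where a direct application of Lemma \ref{l:5.7} is unavailable; it is precisely closedness of $\widetilde F(\widetilde M)$ coming from Hopf--Rinow, together with the approximation $q_n \to q$ off the antipodal ray, that bridges the gap.
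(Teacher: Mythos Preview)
Your proof is correct and follows essentially the same approach as the paper's: reduce to Lemma \ref{l:5.7} on a dense set of points $q$ (those admitting a triangle $\triangle opq$ with $\measuredangle o < \pi$), then use continuity of $F$ and closedness of $\widetilde F(\widetilde M)$ to conclude. The paper simply asserts this density and closedness in one sentence; you have supplied the details, correctly identifying the exceptional set as the antipodal ray $\rho([0,\infty))$ and verifying closedness via Hopf--Rinow.
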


\begin{proof}
The set of $q \in M$ such that there is a geodesic triangle of the form $\triangle opq$ with $ \measuredangle o < \pi$ is dense in $M$.  By Lemma \ref{l:5.7} for all such $q$, $F(q) \in \widetilde F(\widetilde M)$.  Thus by continuity of $F$ and the fact that $\widetilde F(\widetilde M)$ is closed, $F(M) \subset \widetilde F(\widetilde M)$.
\end{proof}

This completes the proof of Theorem \ref{t:main} except for the convexity condition about $\tau$. But,  by symmetry on interchanging the roles of $p$ and $q$, it follows from the one about $\gamma$.

\section{Examples}

\subsection{The $\lambda$--spheres $\widetilde M_\lambda$}
Faridi and Schucking \cite{FS} studied a  one parameter family of rotationally symmetric Riemannian metrics on the two dimensional sphere that, in geodesic polar coordinates $(r, \theta)$, take the form
 \begin{equation}\label{e:lambda}
ds_\lambda^2 = dr^2 + \frac{\sin^2(r)}{1+ \lambda \sin^2(r)} d\theta^2.
\end{equation}
  For  $\lambda > -1$, let  $\widetilde M_\lambda$  denote the surface with the metric (\ref{e:lambda}).   
 In particular $\widetilde M_0$ is the 2--sphere of constant curvature 1.   The $\widetilde M_\lambda$ make convenient model surfaces because 
their geodesics have explicit formulas in terms of elementary functions.  

According to \cite{FS},  if  $\sigma(t) = (r(t), \theta(t))$ is the unit speed 
geodesic  in $\widetilde M_\lambda$  starting at $\sigma(0) = \tilde p =(r_0,0)$ which is initially  perpendicular to the meridian, then
 \begin{eqnarray}\label{e:geodesic}
r(t) &=&\arccos(\cos(r_0) \cos(t \varphi_0)) \cr
\theta(t) &=&  t\varphi_0\lambda \sin(r_0)  +  \arctan\left(\frac{ \tan(t\varphi_0)}{\sin(r_0)}          \right) 
\end{eqnarray}
 where $\varphi_0 = \frac 1 {\sqrt{ 1 + \lambda \sin^2(r_0)}} $ and the branch of the inverse tangent is chosen appropriately.  
 More generally, one can verify that if  the initial condition satisfies $r(0) = r_0\in (0,\pi)$ and $r^\prime(0)=\dot r_0 \in [-1,1]$, then
 \begin{equation}\label{e:geodesic2}
r(t) = \arccos\left(\cos(r_0) \cos(t \varphi ) -\dot r_0 \sin(r_0)\frac{\sin(t \varphi)}{\varphi} \right)
\end{equation}
where $\varphi = \sqrt\frac{1+\dot r_0^2\lambda\sin^2(r_0)}{1+\lambda\sin^2(r_0)} $.
Also by \cite{FS} the Gaussian curvature of $\widetilde M_\lambda$ is given by the formula
 \begin{equation}\label{e:gauss}
G_\lambda(r) = \frac{ 1 + 3 \lambda - 2\lambda \sin^2(r)}{ (1 + \lambda\sin^2(r))^2}.
\end{equation}

The cut loci of points for a general class of surfaces of revolution, which include the $\lambda$--spheres for  $ \lambda \geq -\frac 2 3$, are described in  \cite{ST}. 
 If $\lambda > 0$, then $C(\tilde p)$ is an arc in the opposite meridian $\theta = \pi$ containing the antipodal point $(\pi-r_0,\pi)$,  while if $ -\frac 2 3 < \lambda < 0$, then $C(\tilde p)$ is an arc contained in the parallel $r = \pi - r_0$ containing the antipodal point $(\pi-r_0,\pi)$.  The results of \cite{ST} fail to apply when $-1 < \lambda \leq  -\frac 2 3$ because the Gaussian curvature (\ref{e:gauss}) in not a monotone function for $0< r <\frac \pi 2$ whenever $\lambda$ is in this range. Fortunately, using (\ref{e:geodesic2}), it is still true that,  for  all $ -1 < \lambda < 0$,  $C(\tilde p)$  is an arc contained in the parallel $r = \pi - r_0$ containing the antipodal point $(\pi-r_0,\pi)$.
 In this case, 
 the endpoints of $C(\tilde p)$ are found where the geodesic (\ref{e:geodesic}) starting at $\tilde p$ perpendicularly to the meridian first meets the parallel $r = \pi-r_0$.   This means that $r(t) = \pi - r_0$ where  $t\varphi _0= \pi$. Hence solving for $t$, the distance from $\tilde p$ to the endpoints of $C(\tilde p)$ is $t = \pi \sqrt{1 + \lambda \sin^2(r_0)}$.
Therefore, when  $ -1 < \lambda < 0$, the reference space for $\widetilde M_\lambda$ 
 is the rectangle
 $$
 \{ (x,y) : r_0 \leq x+y \leq 2\pi-r_0, -r_0 \leq    y-x \leq r_0 \},
 $$
  and the image of the cut locus of $\tilde p$ in $\widetilde F(\widetilde M_\lambda)$ is the horizontal line segment with $y= \pi-r_0$ and $ \pi\sqrt{1 + \lambda \sin^2(r_0)} \leq x \leq \pi$ as pictured in Figure \ref{f:2}.

\begin{figure}[h]
\begin{center}
\setlength{\unitlength}{1 cm}
\begin{picture}(6,6)(-1,-1)
{\color{gray}
\put(0,0){\line(1,0){5}}
\put(0,0){\line(0,1){5}}}
\thicklines
\put(0,2){\line(1,-1){2}}
\put(0,2){\line(1,1){3}}
\put(2,0){\line(1,1){3}}
\put(3,5){\line(1,-1){2}}
\put(3.5,3){\line(1,0){1.5}}
\put(2, -.5){$r_0$}
\put(-.5,2){$r_0$}
\put(5,-.5){$\pi$}
\put(-.5,5){$\pi$}
\end{picture}
\setlength{\unitlength}{1 cm}
\begin{picture}(6,6)(-1,-1)
{\color{gray}
\put(0,0){\line(1,0){5}}
\put(0,0){\line(0,1){5}}}
\thicklines
\put(0,3){\line(1,-1){3}}
\put(0,3){\line(1,1){2}}
\put(3,0){\line(1,1){2}}
\put(2,5){\line(1,-1){3}}
\put(3.5,2){\line(1,0){1.5}}
\put(3, -.5){$r_0$}
\put(-.5,3){$r_0$}
\put(5,-.5){$\pi$}
\put(-.5,5){$\pi$}
\end{picture}
\caption{$\widetilde F(\widetilde M_\lambda)$  with  cut locus for $-1 < \lambda <0$ for $ r_0 < \frac\pi 2$ (left) and $ r_0 > \frac\pi 2$ (right).}
\label{f:2}
\end{center}
\end{figure}
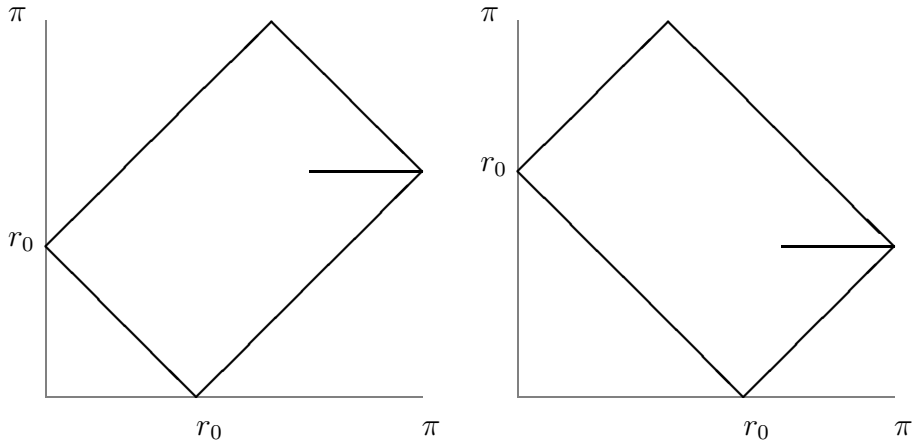

Using equation (\ref{e:lambda})   to calculate the Hessian   of the distance function $L_{\tilde o}$ from the vertex $\tilde o$ in $\widetilde M_\lambda$, one obtains
\begin{equation}\label{e:lHessian}
\nabla^2L_{\tilde o}  = \frac {\cos(r)}{\sin(r) (1 + \lambda\sin^2(r))}(ds_\lambda^2 - dr^2)
\end{equation}
in  geodesic polar coordinates about  $\tilde o$.

\subsection{Comparison of $\mathbb{RP}^n$ with $\lambda$--spheres.}
 
 Let  $\mathbb{RP}^n$ denote  the real projective $n$--space with its metric of constant sectional curvature 1. In a normal coordinate system about  a fixed origin $o \in \mathbb{RP}^n$, its metric takes the form  $ dr^2 + \sin^2(r) d\theta_{n-1}^2$ for $ 0< r < \frac{\pi}{2}$. Here $d\theta_{n-1}^2$ denotes the standard metric on the unit $(n-1)$--dimensional sphere. If $-1 < \lambda < 0$ and $ 0<r <\frac{\pi} {2}$, then $ 0 < 1 + \lambda\sin^2(r) \leq 1$.  Thus
\begin{equation}
\frac {\cos(r)}{\sin(r)} \leq  \frac {\cos(r)}{\sin(r) (1 + \lambda\sin^2(r))}. 
\end{equation}
With (\ref{e:lHessian}), this demonstrates that the Hessian of $L_{\tilde o}$ dominates that of $L_o$.
Therefore  every $\lambda$--sphere  $\widetilde M_\lambda$  with $ -1 < \lambda <0$ has weaker radial attraction than $\mathbb{RP}^n$.

Furthermore,  none of the geodesics in $\mathbb{RP}^n$ have bad encounters with the cut loci of $\widetilde M_\lambda$ when $-1 < \lambda <0$.  The reason is simple.  The distance of any point of $\mathbb{RP}^n$ to the origin $o$ is never greater than $\frac \pi 2$. On the other hand the cut locus of any point  $\tilde p = (r_0,0)$ in $\widetilde M_\lambda$  with $r_0 \leq \frac{\pi}{2}$, being an arc in the co--parallel $ r = \pi-r_0$,   is at a distance $ \pi - r_0 \geq\frac \pi 2$ from $\tilde o$. 
Suppose that $\sigma$ is a geodesic emanating from a point $p$ in  $\mathbb{RP}^n$ with $ d(p,o) =r_0$, then
if $ r_0 < \frac \pi 2$, $\sigma$  cannot encounter any cut points, while if  $r_0 = \frac {\pi} {2}$,  the existence of a bad encounter would lead to the contradiction  $\frac \pi 2 < L_o(\sigma(t^\ast))\leq \frac \pi 2$ for some $t^\ast$. This is clear from Figure \ref{f:3}.  Thus Theorem \ref{t:main} applies to the pair $M = \mathbb{RP}^n$ and $\widetilde M =\widetilde M_\lambda$.
\vskip 1cm
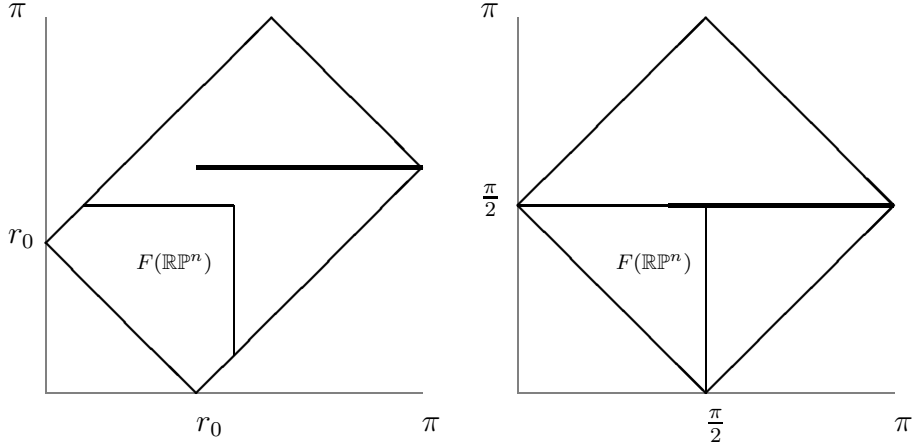
\begin{figure}[h]
\begin{center}
\setlength{\unitlength}{1 cm}
\begin{picture}(6,6)(-1,-1)
{\color{gray}
\put(0,0){\line(1,0){5}}
\put(0,0){\line(0,1){5}}}
\put(.5,2.5){\line(1,0){2}}
\put(2.5,2.5){\line(0,-1){2}}
\thicklines
\put(0,2){\line(1,-1){2}}
\put(0,2){\line(1,1){3}}
\put(2,0){\line(1,1){3}}
\put(3,5){\line(1,-1){2}}
\put(1.2,1.5){$^{F(\mathbb{RP}^n)}$}
\linethickness{.5mm}
\put(5,3){\line(-1,0){3}}
\put(2, -.5){$r_0$}
\put(-.5,2){$r_0$}
\put(5,-.5){$\pi$}
\put(-.5,5){$\pi$}
\end{picture}
\setlength{\unitlength}{1 cm}
\begin{picture}(6,6)(-1,-1)
{\color{gray}
\put(0,0){\line(1,0){5}}
\put(0,0){\line(0,1){5}}}
\put(0,2.5){\line(1,0){2.5}}
\put(2.5,2.5){\line(0,-1){2.5}}
\thicklines
\put(0,2.5){\line(1,-1){2.5}}
\put(0,2.5){\line(1,1){2.5}}
\put(2.5,0){\line(1,1){2.5}}
\put(2.5,5){\line(1,-1){2.5}}
\put(1.3,1.5){$^{F(\mathbb{RP}^n)}$}
\linethickness{.5mm}
\put(5,2.5){\line(-1,0){3}}
\put(2.5, -.5){$\frac \pi 2$}
\put(-.5,2.5){$\frac \pi 2$}
\put(5,-.5){$\pi$}
\put(-.5,5){$\pi$}
\end{picture}
\caption{$F(\mathbb{RP}^n) \subset \widetilde F(\widetilde  M_\lambda)$ for $ r_0 < \frac \pi 2$ (left) and $ r_0 = \frac \pi 2$ (right).}
\label{f:3}
\end{center}
\end{figure}

 \begin{remark}
 
 The preceding example is relevant to  the version of the generalized Toponogov theorem proved in the paper \cite{ISU}:  If the radial curvature of $M$ is bounded from below by that of the model surface $\widetilde M$,  then every geodesic triangle $\triangle  opq$ has a corresponding Alexandrov triangle $\triangle \tilde o \tilde p\tilde q$  provided  one also assumes the condition that
 none of the  local maxima of the distance function $L_o$   restricted to the \lq\lq ellipsoids\rq\rq\space 
$$
E(o,p;r) = \left\{ x \in M : d(o,x) + d(p,x) = r \right\}
$$ 
for $d(o,p) < r $  are mapped to a cut point of $\tilde p$ under the reference map.

However, this condition is stronger than necessary.  Let  $-1 <\lambda < -\frac 3 4$, then there exists $p,x \in \mathbb{RP}^n$ satisfying $d (o,p) = d (o,x) = \frac \pi 2$ and 
$ \pi\sqrt{1+\lambda} < d (p,x) \leq \frac \pi 2$. Hence $x$ maps to a cut point of $\tilde p$ under the reference map  and $x$ is a local maximum  of  $L_o$ restricted to the ellipsoid $E(o,p; d(o,x) +d(p,x))$. Indeed, $x$ is a global maximum of $L_o$ on $\mathbb{RP}^n$.   See Figure \ref{f:3} (right). Thus corresponding Alexandrov triangles exist on account of Theorem \ref{t:main} without the condition about the ellipsoids being satisfied.

 \end{remark}

\subsection{Comparison of spheres of constant curvature $\kappa$ with the $\lambda$--spheres} 
  Another interesting family of examples can be obtained where $M _\kappa$ is a sphere of constant curvature $\kappa$.  
  
  \begin{definition}   
  Let $ 1\leq  \kappa \leq 4$. Define
 \begin{equation*}
\widehat\lambda(\kappa) = \max_{\frac{\pi}{2}< r<\frac{\pi}{\sqrt{\kappa}}}  \frac{\frac{\cot(r) \tan(\sqrt\kappa r)}{\sqrt\kappa} -1} {\sin^2 r}.
\end{equation*}
\end{definition}

\begin{proposition} Let $ 1\leq  \kappa \leq 4$.  If   $\widehat\lambda(\kappa) \leq \lambda \leq 0$, then the $\lambda$--sphere $\widetilde M_\lambda$ has weaker radial attraction that the $n$--sphere $M_\kappa$ of constant sectional curvature $\kappa$.
 \end{proposition}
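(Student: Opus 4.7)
The plan is to reduce weaker radial attraction, via the Hessian characterization of Theorem~\ref{t:equiv}, to a one-variable scalar inequality, and then verify that inequality by a case analysis on the sign of $\cos r$. For $M_\kappa$ the standard Jacobi-field computation in geodesic polar coordinates gives
$$\nabla^2 L_o \;=\; \sqrt{\kappa}\cot(\sqrt{\kappa}\,r)\,\bigl(g - dL_o\otimes dL_o\bigr)$$
on $M_\kappa\setminus(\{o\}\cup C(o))$, i.e., for $0 < r < \pi/\sqrt{\kappa}$; equation~(\ref{e:lHessian}) is the analogous formula on $\widetilde M_\lambda$. Since both Hessians annihilate the radial direction and act as a scalar multiple of $g - dr\otimes dr$ on its orthogonal complement, the Hessian domination required by Theorem~\ref{t:equiv} reduces to the pointwise scalar inequality
$$\frac{\cos r}{\sin r\,(1+\lambda\sin^2 r)} \;\geq\; \sqrt{\kappa}\cot(\sqrt{\kappa}\,r),\qquad 0 < r < \tfrac{\pi}{\sqrt{\kappa}}.$$

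First I would treat $0 < r \leq \pi/2$. I would establish the auxiliary inequality $\cot r \geq \sqrt{\kappa}\cot(\sqrt{\kappa}\,r)$ on $(0, \pi/\sqrt{\kappa})$ by setting $h(r) = \cos r\,\sin(\sqrt{\kappa}\,r) - \sqrt{\kappa}\cos(\sqrt{\kappa}\,r)\sin r$, noting $h(0)=0$, and computing $h'(r) = (\kappa-1)\sin r\,\sin(\sqrt{\kappa}\,r) \geq 0$ when $\kappa\geq 1$. In the sub-region where $\cos(\sqrt{\kappa}\,r) > 0$ the desired inequality then follows because $\cos r \geq 0$ and $\lambda \in (-1,0]$ makes $1/(1+\lambda\sin^2 r) \geq 1$, with the denominator staying positive because $\lambda > -1$; this factor only amplifies the auxiliary bound. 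In the sub-region $\pi/(2\sqrt{\kappa}) \leq r \leq \pi/2$ (which occurs when $\kappa>1$), the right-hand side is nonpositive while the left-hand side is nonnegative, so the inequality is automatic.

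Next I would handle $\pi/2 < r < \pi/\sqrt{\kappa}$, a range that is nonempty precisely because $\kappa \leq 4$. Here $\cos r$ and $\cos(\sqrt{\kappa}\,r)$ are both negative while $\sin r$, $\sin(\sqrt{\kappa}\,r)$, and $1+\lambda\sin^2 r$ are positive. Dividing the inequality by the negative quantity $\sqrt{\kappa}\cot(\sqrt{\kappa}\,r)$ (which reverses the inequality) and then clearing the positive factor $1+\lambda\sin^2 r$ rewrites it as
$$\frac{\cot r\,\tan(\sqrt{\kappa}\,r)/\sqrt{\kappa}-1}{\sin^2 r}\;\leq\;\lambda,$$
which is exactly the hypothesis $\lambda \geq \widehat\lambda(\kappa)$ by the definition of $\widehat\lambda(\kappa)$. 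Combining the two cases gives the scalar inequality throughout $(0,\pi/\sqrt{\kappa})$, whence $\nabla^2 L_o \leq \nabla^2 L_{\tilde o}$ on the common range of the distance functions, and Theorem~\ref{t:equiv} delivers the weaker radial attraction claim.

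The main obstacle I expect is the sign bookkeeping in the second case: clearing denominators through factors of opposite sign demands careful attention to inequality reversals, and one must also verify that the constraint $\kappa\leq 4$ is exactly what is needed to keep the relevant interval nonempty so that the definition of $\widehat\lambda(\kappa)$ is meaningful. Once the reduction to $\widehat\lambda(\kappa)$ is written cleanly, the first case is just a monotonicity argument on the auxiliary function $h$.
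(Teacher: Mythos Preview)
Your proposal is correct and follows essentially the same route as the paper: reduce weaker radial attraction to the scalar Hessian inequality via Theorem~\ref{t:equiv} and equation~(\ref{e:lHessian}), handle $0<r\leq\pi/2$ by the chain $\sqrt{\kappa}\cot(\sqrt{\kappa}r)\leq\cot r\leq \cos r/(\sin r(1+\lambda\sin^2 r))$, and for $\pi/2<r<\pi/\sqrt{\kappa}$ clear the negative factors to recover exactly the defining inequality for $\widehat\lambda(\kappa)$. The only differences are cosmetic: you supply an explicit proof of $\cot r\geq\sqrt{\kappa}\cot(\sqrt{\kappa}r)$ via the auxiliary function $h$ (the paper simply asserts this step), and your sub-case split at $r=\pi/(2\sqrt{\kappa})$ is unnecessary since the chain of inequalities already covers all of $(0,\pi/2)$ uniformly.
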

  
\begin{proof}
By comparing the Hessians of the distance functions,  $\widetilde M_\lambda$ has weaker radial attraction than $M_\kappa$, if and only if
 \begin{equation}\label{e:lambdakappa}
 \frac {\cos(\sqrt \kappa r)}{\sin(\sqrt \kappa r)} \sqrt \kappa \leq \frac {\cos(r)}{\sin(r) (1 + \lambda\sin^2(r))}
\end{equation}
for all $ 0 < r \leq \frac \pi {\sqrt \kappa}$.  
However, assuming $-1 < \lambda \leq 0$ and $ 1 \leq  \kappa\leq 4$, it is automatic that
\begin{equation}
 \frac {\cos(\sqrt \kappa r)}{\sin(\sqrt \kappa r)} \sqrt \kappa \leq \frac {\cos(r)}{\sin(r)} \leq \frac {\cos(r)}{\sin(r) (1 + \lambda\sin^2(r))}
\end{equation}
for $ 0< r < \frac \pi 2$. 
 Consequently, inequality (\ref{e:lambdakappa}) only needs to hold for $ \frac \pi 2 < r < \frac \pi{\sqrt{\kappa}}$ when $-1 < \lambda \leq 0$ and $ 1 \leq  \kappa\leq 4$.  Noting that $\cos(\sqrt \kappa r)$ is negative for $ \frac \pi 2 < r < \frac \pi{\sqrt{\kappa}}$, inequality (\ref{e:lambdakappa}) can be rewritten
\begin{equation}
\lambda \geq \frac{\frac{\cot(r) \tan(\sqrt\kappa r)}{\sqrt\kappa} -1} {\sin^2 r}
\end{equation}
when  $ \frac \pi 2 < r < \frac \pi{\sqrt{\kappa}}$
which concludes the proof.
\end{proof}  
  
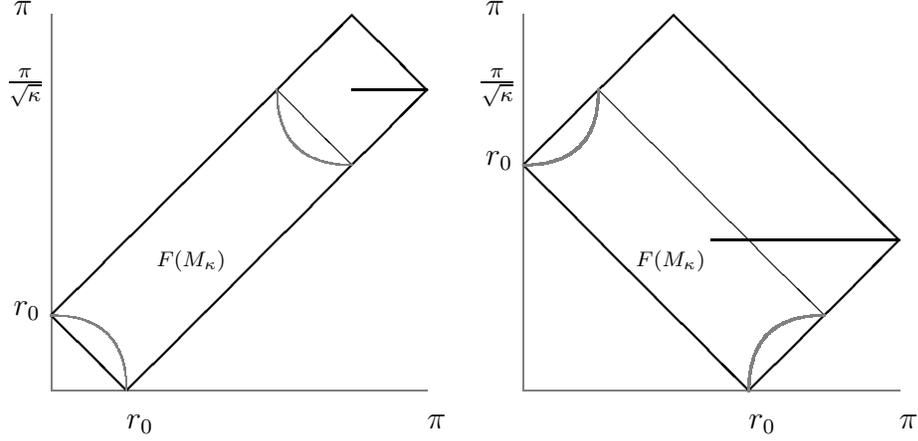
\begin{figure}[h]
\begin{center}
\setlength{\unitlength}{1 cm}
\begin{picture}(6,6)(-1,-1)
{\color{gray}
\put(0,0){\line(1,0){5}}
\put(0,0){\line(0,1){5}}}
\put(3,4){\line(1,-1){1}}
{\thicklines
\put(0,1){\line(1,-1){1}}
\put(0,1){\line(1,1){4}}
\put(1,0){\line(1,1){4}}
\put(4,5){\line(1,-1){1}}
\put(1.4,1.5){$^{ F(M_\kappa)}$}
\put(5,4){\line(-1,0){1}} 
}
\put(1, -.5){$r_0$}
\put(-.5,1){$r_0$}
\put(5,-.5){$\pi$}
\put(-.5,5){$\pi$}
\put(-.6,4){$\frac {\pi} {\sqrt{\kappa}}$}
{\color{gray}
 \qbezier{(0,1), (1,1), (1,0)}
\qbezier{(3,4).(3,3), (4,3)}
}
\end{picture}  
\setlength{\unitlength}{1 cm}
\begin{picture}(6,6)(-1,-1)
{\color{gray}
\put(0,0){\line(1,0){5}}
\put(0,0){\line(0,1){5}}}
\put(1,4){\line(1,-1){3}}
\thicklines
\put(0,3){\line(1,-1){3}}
\put(0,3){\line(1,1){2}}
\put(3,0){\line(1,1){2}}
\put(2,5){\line(1,-1){3}}
\put(1.5,1.5){$^{ F(M_\kappa)}$}
\put(5,2){\line(-1,0){2.5}}
\put(3, -.5){$r_0$}
\put(-.5,3){$r_0$}
\put(5,-.5){$\pi$}
\put(-.5,5){$\pi$}
\put(-.6,4){$\frac {\pi} {\sqrt{\kappa}}$}
{\color{gray}
\qbezier{(3,0),(3,1),(4,1)}
\qbezier{(0,3),(1,3),(1,4)}
}
\end{picture}

\caption{$F(M_\kappa) \subset \widetilde F(\widetilde  M_\lambda)$ with cut locus and the null curves $\mathfrak{s} =0$ represented by overlaying  Figure \ref{f:2a} on Figure \ref{f:2}.  $\widetilde F(C(\tilde p)) \cap F(M_\kappa)$ is empty
for $ r_0 \leq \pi -\frac \pi {\sqrt{\kappa}}$ (left) and  contained in the region where $\frak{s} < 0$ for $\frac \pi{2} < r_0 < \frac \pi {\sqrt{\kappa}}$(right)  .}
\label{f:5a}
\end{center}
\end{figure}

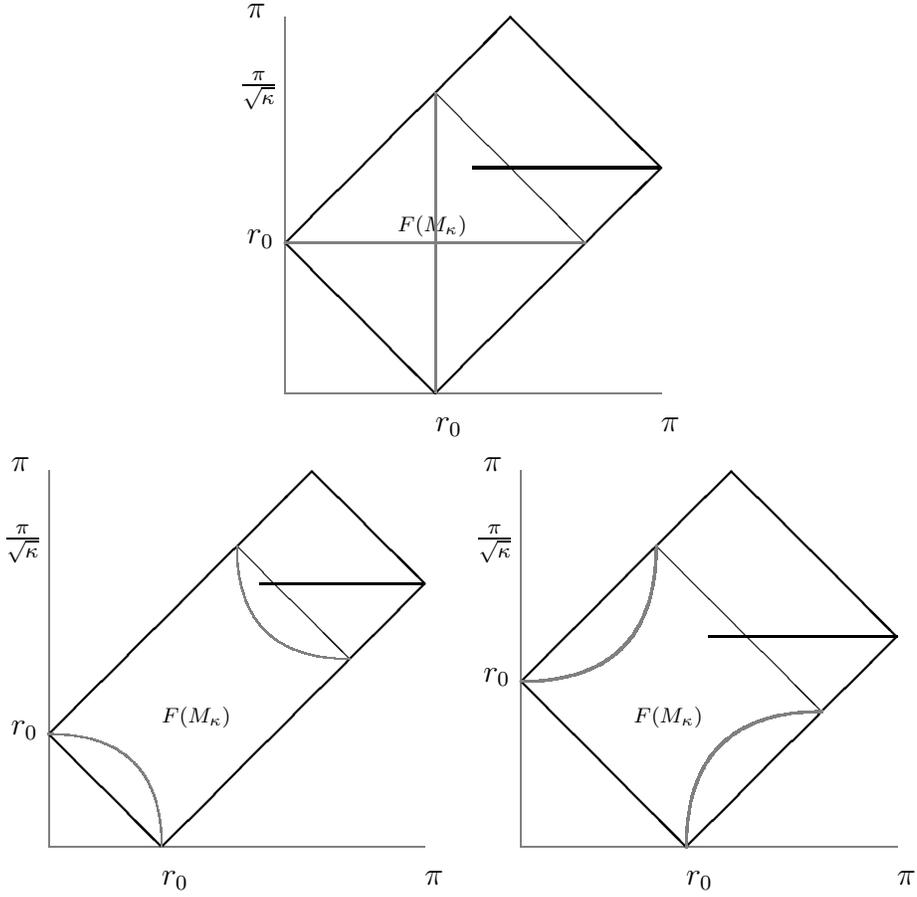
\begin{figure}[h]
\begin{center}

\setlength{\unitlength}{1 cm}
\begin{picture}(6,6)(-1,-1)
{\color{gray}
\put(0,0){\line(1,0){5}} 
\put(0,0){\line(0,1){5}}} 
\put(2,4){\line(1,-1){2}}
\thicklines
\put(0,2){\line(1,-1){2}}
\put(0,2){\line(1,1){3}}
\put(2,0){\line(1,1){3}}
\put(3,5){\line(1,-1){2}}
\put(1.5,2){$^{ F(M_\kappa)}$}
\put(5,3){\line(-1,0){2.5}} 
\put(2, -.5){$r_0$}
\put(-.5,2){$r_0$}
\put(5,-.5){$\pi$}
\put(-.5,5){$\pi$}
\put(-.6,4){$\frac {\pi} {\sqrt{\kappa}}$}
{\color{gray}
\put(0,2){\line(1,0){4}}
\put(2,0){\line(0,1){4}}
}
\end{picture}

\setlength{\unitlength}{1 cm}
\begin{picture}(6,6)(-1,-1)
{\color{gray}
\put(0,0){\line(1,0){5}}
\put(0,0){\line(0,1){5}}}
\put(2.5,4){\line(1,-1){1.5}}
{\thicklines
\put(0,1.5){\line(1,-1){1.5}}
\put(0,1.5){\line(1,1){3.5}}
\put(1.5,0){\line(1,1){3.5}}
\put(3.5,5){\line(1,-1){1.5}}
\put(1.5,1.5){$^{ F(M_\kappa)}$}
\put(5,3.5){\line(-1,0){2.2}} 
}
\put(1.5, -.5){$r_0$}
\put(-.5,1.5){$r_0$}
\put(5,-.5){$\pi$}
\put(-.5,5){$\pi$}
\put(-.6,4){$\frac {\pi} {\sqrt{\kappa}}$}
{\color{gray}
 \qbezier{(0,1.5), (1.5,1.5), (1.5,0)}
\qbezier{(2.5,4).(2.5,2.5), (4,2.5)}
}
\end{picture}  
\setlength{\unitlength}{1 cm}
\begin{picture}(6,6)(-1,-1)
{\color{gray}
\put(0,0){\line(1,0){5}}
\put(0,0){\line(0,1){5}}}
\put(1.8,4){\line(1,-1){2.2}}
\thicklines
\put(0,2.2){\line(1,-1){2.2}}
\put(0,2.2){\line(1,1){2.8}}
\put(2.2,0){\line(1,1){2.8}}
\put(2.8,5){\line(1,-1){2.2}}
\put(1.5,1.5){$^{ F(M_\kappa)}$}
\put(5,2.8){\line(-1,0){2.5}}
\put(2.2, -.5){$r_0$}
\put(-.5,2.2){$r_0$}
\put(5,-.5){$\pi$}
\put(-.5,5){$\pi$}
\put(-.6,4){$\frac {\pi} {\sqrt{\kappa}}$}
{\color{gray}
\qbezier{(2.2,0),(2.2,1.8),(4,1.8)}
\qbezier{(0,2.2),(1.8,2.2),(1.8,4)}
}
\end{picture}
 \caption{$F(M_\kappa) \subset \widetilde F(\widetilde  M_\lambda)$ with cut locus and the null curves $\mathfrak{s} =0$  for $ \pi - \frac \pi{\sqrt{\kappa}} < r_0 < \frac \pi {2\sqrt{\kappa}}$ (lower left), $ r_0 = \frac \pi{2\sqrt{\kappa}}$(upper middle), and  $\frac \pi{2\sqrt{\kappa}} < r_0 < \frac \pi {2}$(lower right). $\widetilde F(C(\tilde p)) \cap F(M_\kappa)$ is always contained in the region where $\frak{s} <0$.}
\label{f:7}
\end{center}
\end{figure}

The function $\widehat\lambda(\kappa)$ is easily computed numerically. See Table \ref{tb:1} for some approximate  values.

\begin{proposition} 
 If $1 \leq \kappa \leq 4$ and
 $ \widehat\lambda(\kappa)  < \lambda <0$, then   minimizing geodesics in $M_\kappa$ do not have any bad encounters with the cut loci in $\widetilde M_\lambda$.  
  \end{proposition}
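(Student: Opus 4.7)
The plan is to reduce the no-bad-encounters statement to a strict sign condition on the slope field $\mathfrak{s}_{M_\kappa}$ of Proposition \ref{p:kappasphere} along the image $\widetilde F(C(\tilde p))$, and then verify that sign by a direct computation.

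First, because the cut locus of $\tilde p$ in $\widetilde M_\lambda$ lies on the parallel $r = \pi - r_0$ for all $-1 < \lambda < 0$, every arc $\alpha$ joining an interior cut point to the trunk satisfies $L_{\tilde o} \circ \alpha \equiv \pi - r_0$, and in particular $(L_{\tilde o} \circ \alpha)_+^\prime(t_0) = 0$ at every encounter parameter. By Lemma \ref{l:4} the encounter at $t_0$ is not bad provided $(L_o \circ \sigma)_+^\prime(t_0) < 0$, so this strict negativity is what must be shown for every minimizing geodesic $\sigma$ in $M_\kappa$ emanating from $p$ that meets $\widetilde F(C(\tilde p))$.

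Second, any encounter point $F(\sigma(t_0)) = (x_0, \pi - r_0)$ lies in $F(M_\kappa)$, which forces $\pi - r_0 \leq \pi/\sqrt\kappa$; hence $\sigma(t_0)$ is not the antipode of $o$ in $M_\kappa$ (the lone boundary case $\pi - r_0 = \pi/\sqrt\kappa$ is handled directly), and $L_o$ is smooth there. Therefore $(L_o \circ \sigma)_+^\prime(t_0) = \mathfrak{s}_{M_\kappa}(x_0, \pi - r_0)$, and after clearing the positive denominator in the formula of Proposition \ref{p:kappasphere}, the required $\mathfrak{s}_{M_\kappa} < 0$ is equivalent to
\[
\cos(\sqrt\kappa\, r_0) < \cos(\sqrt\kappa(\pi - r_0))\cos(\sqrt\kappa\, x_0).
\]

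Third, I would verify this inequality over $x_0 \in [\pi\sqrt{1+\lambda\sin^2 r_0},\,\pi]$ intersected with the $x$-slice of $F(M_\kappa)$ at height $\pi - r_0$. The critical case is the left endpoint $x_0 = \pi\sqrt{1 + \lambda\sin^2 r_0}$, which by formula (\ref{e:geodesic}) is precisely the parameter where the perpendicular geodesic $\tilde\sigma_{\pi/2}$ first meets the parallel $r = \pi - r_0$ in $\widetilde M_\lambda$. At this endpoint the strict hypothesis $\lambda > \widehat\lambda(\kappa)$ should translate, via the Hessian comparison characterizing $\widehat\lambda(\kappa)$, into strict negativity of the numerator above. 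Monotonicity of $\cos(\sqrt\kappa\, x_0)$ on the relevant subinterval then propagates the strict inequality to all $x_0$ in the range. The main obstacle is this translation step: turning the Hessian-theoretic definition of $\widehat\lambda(\kappa)$ into a pointwise sign statement about $\mathfrak{s}_{M_\kappa}$ on the cut locus segment, and confirming in each of the subcases of $r_0$ pictured in Figures \ref{f:5a} and \ref{f:7} that $\widetilde F(C(\tilde p)) \cap F(M_\kappa)$ is genuinely contained in the region $\mathfrak{s}_{M_\kappa} < 0$.
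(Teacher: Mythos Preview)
Your overall strategy matches the paper's: since $C(\tilde p)$ lies on the parallel $r=\pi-r_0$, the slope field $\mathfrak{s}$ of $\widetilde M_\lambda$ vanishes along $\widetilde F(C(\tilde p))$, and by Lemma~\ref{l:4} it suffices to show that the slope field of $M_\kappa$ (formula~(\ref{e:kssf})) is strictly negative on $\widetilde F(C(\tilde p))\cap F(M_\kappa)$.  The case split according to $r_0$ and the reduction to a sign condition on the numerator of~(\ref{e:kssf}) are also correct.

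The genuine gap is exactly the step you flag as ``the main obstacle.''  The hypothesis $\lambda>\widehat\lambda(\kappa)$ is a Hessian comparison (it is what makes $\widetilde M_\lambda$ have weaker radial attraction than $M_\kappa$); it does \emph{not} translate by itself into negativity of $\mathfrak{s}_{M_\kappa}$ at the left endpoint $x_0=\pi\sqrt{1+\lambda\sin^2 r_0}$ of the cut locus.  In the paper this step is carried out by solving $\mathfrak{s}_{M_\kappa}(x,\pi-r_0)=0$ explicitly, rewriting the condition ``the null curve lies strictly to the left of the cut locus'' as the inequality
\[
\frac{\bigl(\tfrac{1}{\pi\sqrt\kappa}\cos^{-1}\!\bigl(\tfrac{\cos(\sqrt\kappa r_0)}{\cos(\sqrt\kappa(r_0-\pi))}\bigr)\bigr)^2-1}{\sin^2 r_0}<\lambda,
\]
introducing $\hat\mu(\kappa)$ as the supremum of the left side over the relevant range of $r_0$, and then verifying \emph{numerically} (Table~\ref{tb:1}) that $\hat\mu(\kappa)<\widehat\lambda(\kappa)$ for $1<\kappa<4$.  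No analytic link between $\widehat\lambda$ and $\hat\mu$ is given, so your proposed route through the Hessian definition of $\widehat\lambda$ does not furnish the inequality.

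A second, smaller issue: your monotonicity argument for propagating negativity from the left endpoint is not valid in general.  On the line $y=\pi-r_0$ the numerator of~(\ref{e:kssf}) is $\cos(\sqrt\kappa r_0)-\cos(\sqrt\kappa(\pi-r_0))\cos(\sqrt\kappa x)$, and whether this moves up or down as $x$ increases depends on the sign of $\cos(\sqrt\kappa(\pi-r_0))$, which changes across $r_0=\pi-\tfrac{\pi}{2\sqrt\kappa}$.  The paper sidesteps this by locating the unique zero of $\mathfrak{s}_{M_\kappa}$ on that horizontal line within $F(M_\kappa)$ and showing it lies strictly to the left of the cut-locus segment (so the entire segment sits in the region $\mathfrak{s}_{M_\kappa}<0$, as in Figures~\ref{f:5a} and~\ref{f:7}), rather than by propagating from an endpoint.
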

  \begin{proof}
 Let $r_0 = d(p,o)$ with $ 0 < r_0 < \frac \pi {\sqrt{\kappa}}$.

If $ 0 < r_0 \leq \pi - \frac{\pi} {\sqrt{\kappa}}$, then no minimizing geodesic emanating from $p$ encounters the cut locus because $\frac{\pi} {\sqrt{\kappa}} < \pi - r_0$ shows  $\widetilde F(C(\tilde p))$ is disjoint from  $F(M_\kappa)$. See Figure \ref{f:5a} on the left.  

When  $ \pi - \frac \pi{\sqrt{\kappa}} < r_0 < \frac \pi {\sqrt{\kappa}}$, the cut locus intersects  $F(M_\kappa)$ only if $\pi \sqrt{1 + \lambda \sin^2r_0}\leq \frac {2\pi}{\sqrt{\kappa}} - \pi$, or equivalently, 
$\lambda \sin^2r_0 \leq   \frac {4}{\kappa} -\frac{4}{\sqrt{\kappa}}$.  One may verify  that $\hat\lambda(\kappa) <  \frac {4}{\kappa} -\frac{4}{\sqrt{\kappa}}$ for all $ 1 < \kappa < 4$.  (See Table \ref{tb:1}.) Thus for every  $1 <\kappa<4$ and $\hat\lambda(\kappa) \leq \lambda \leq \frac {4}{\kappa} -\frac{4}{\sqrt{\kappa}}$,  some geodesics emanating from $p$ will  encounter the cut locus as long as $r_0$ is close enough to $\frac \pi 2$. However, by Lemma \ref{l:4},  none of these encounters are bad because the cut locus never extends far enough into $F(M_\kappa)$ to meet the region where  the slope field satisfies $\mathfrak{s}>0$ and in fact remains in the region where $\mathfrak{s} < 0$.
In order to see this there are two cases to consider. 

 In the first case 
suppose $ \pi - \frac \pi{\sqrt{\kappa}} < r_0 < \frac \pi {2}$.  There are three possible configurations pictured in Figure \ref{f:7}. Using (\ref{e:kssf}) to solve the equation $\mathfrak{s}(x,y) =0$  for $x$ with $y = \pi -r_0$,  one finds 
 that the cut locus does not cross the null curve of $\mathfrak{s}$ as long as
\begin{equation}\label{e:nc1}
 \frac {1} {\sqrt{\kappa}} \cos^{-1} \left( \frac{ \cos(\sqrt{\kappa}r_0)}{ \cos(\sqrt{\kappa}(r_0-\pi))}\right) < \pi \sqrt{1 + \lambda \sin^2r_0}.
 \end{equation}
Inequality  (\ref{e:nc1}) may be rearranged into the equivalent inequality
\begin{equation} \label{e:nc2}
\frac {\left( \frac {1}{\pi\sqrt{\kappa}} \cos^{-1}\left(\frac { \cos(\sqrt{\kappa}r_0)}{ \cos(\sqrt{\kappa}(r_0-\pi))} \right)      \right)^2 - 1 }{\sin^2(r_0)}
< \lambda.
\end{equation}
By setting $\hat\mu(\kappa)$ equal to the supremum of the left--hand side of inequality  (\ref{e:nc2}) for $ \pi - \frac \pi{\sqrt{\kappa}} < r_0 < \frac \pi {2}$,  one can verify that $\hat\mu(\kappa) < \hat\lambda(\kappa)$ for all $ 1 < \kappa < 4$. (See Table \ref{tb:1}.)  Since $ \hat\lambda(\kappa) \leq \lambda < 0$, inequality (\ref{e:nc2}) and hence (\ref{e:nc1}) will be satisfied.

In the second case, suppose  $ \frac {\pi}{2} < r_0 < \frac {\pi}{\sqrt{\kappa}}$.  Then every point of $\widetilde F(\widetilde M_\lambda) $ on the horizontal line $ y= \pi - r_0$ is contained in the region with $\mathfrak{s} <0$. See the right  part of  Figure \ref{f:5a}.

\end{proof}

\begin{table}[htbp]
\caption{Values of $\hat\lambda$ and $\hat\mu$ rounded to 5 places.}
\begin{center}
\begin{tabular}{|c||c|c|c|}
\hline
$\sqrt{\kappa}$  & $\hat\mu(\kappa)$    & $\hat\lambda(\kappa) $&   $\frac {4} {\kappa} -\frac {4} {\sqrt{\kappa}}$ \\
\hline
\hline
1.0                 & 0                   &     0             &      0             \\
\hline
1.1                  &   -.74446       & -.50881        &  -.33058        \\
\hline
1.2                  &    -.88571        & -.74151       &      -.55556\\
\hline
1.3                   &    -.94333     &-.85889         &       -.71006\\
\hline
1.4                    &    -.97071     &  -.92212     & -.81633 \\
\hline
1.5                    &     -.98480     & -.95764    &  -.88889\\
\hline
1.6                    &     -.99238      &  -.97803   &   -.93750\\
\hline
1.7                     &     -.99651      &  -.98968    &  -.96886\\
\hline
1.8                    &       -.99869       &-.99607   & -.98765\\
\hline
1.9                    &  -.99972        & -.99914     & -.99723\\
\hline
2.0                  &   -1               &   -1           &     -1     \\
\hline
\end{tabular}
\end{center}
\label{tb:1}
\end{table}%

\section{Topological Applications}

\subsection{A  Sphere Theorem}
 
 In view of Theorem \ref{t:MRT} it seems reasonable to think that if the maximal distance is close enough to $\ell$ then $M$ should be homeomorphic to a sphere.
 Such a result which generalized the Grove--Shiohama Sphere Theorem is proved in \cite{K-O} under the assumption that the model surface is Von Mangoldt and bounds the radial curvature of $M$ from below.  This result can be adapted to our situation. A preliminary lemma is needed to state this result.
 
\begin{lemma} \label{l:top} 
 Let $\widetilde M$ be a compact  model surface with metric $ds^2 = dr^2 + y(r)^2 d\theta^2$, for $ 0 \leq r \leq \ell < \infty$.  Then there exist 
  $0<R<R^\ast<\ell$ such that
 \begin{enumerate}
\item  $y(R) = y(R^\ast)$.
\item $ y(r) > y(R)$ for $ R<r<R^\ast$.
\item $y$ is strictly increasing on $[0,R]$ and strictly decreasing on $[R^\ast,\ell]$.
\item If $0< r_0 <R$ and $\gamma = (r(t),\theta(t))$ is a geodesic starting at $(r_0,0)$ perpendicularly to the meridian, then $\gamma$ eventually meets the parallel $r=R^\ast$ at a parameter value $t_0$ with $ \theta(t_0) > \frac \pi 2$.
\end{enumerate}
\end{lemma}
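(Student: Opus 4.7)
My plan is to prove conditions (1)–(3) by a direct structural analysis of the profile $y$, and to establish (4) via Clairaut's integral for geodesics, using the freedom to choose $c:=y(R)=y(R^*)$ small.

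For (1)–(3), set
\[
a = \sup\{r\in[0,\ell]: y\text{ is strictly increasing on }[0,r]\},\qquad a' = \inf\{r\in[0,\ell]: y\text{ is strictly decreasing on }[r,\ell]\}.
\]
The conditions $y(0)=y(\ell)=0$, $y>0$ on $(0,\ell)$, $y'(0)>0$, and $y'(\ell)<0$ imply $0<a\le a'<\ell$, that $y$ is strictly increasing on $[0,a]$ and strictly decreasing on $[a',\ell]$, and that $m:=\min_{[a,a']}y>0$ (with $m=y(a)$ if $a=a'$). For any $c\in(0,m)$, the intermediate value theorem together with strict monotonicity produces unique $R\in(0,a]$ and $R^*\in[a',\ell)$ with $y(R)=y(R^*)=c$. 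Then (1) is immediate; (2) follows since $y>c$ on $(R,a]$ and on $[a',R^*)$ by strict monotonicity and $y\ge m>c$ on $[a,a']$; (3) is automatic from $R\le a$ and $R^*\ge a'$.

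For (4), Clairaut's relation for the unit-speed geodesic with $r(0)=r_0$, $\dot r(0)=0$, $\dot\theta(0)=1/y(r_0)$ gives $y(r)^2\dot\theta=y(r_0)$ and $\dot r^2=1-y(r_0)^2/y(r)^2$. Since $y(r_0)<c$, the first turning point $s$ of $r$ (where $y(s)=y(r_0)$) lies in $(a',\ell)$ and satisfies $s>R^*$; on $(r_0,s)$ we have $y>y(r_0)$, so $r$ is strictly increasing. Thus $\gamma$ meets $r=R^*$ for the first time at a unique $t_0$ with
\[
\theta(t_0)=\int_{r_0}^{R^*}\frac{y(r_0)\,dr}{y(r)\sqrt{y(r)^2-y(r_0)^2}}.
\]
I will show this exceeds $\pi/2$ for every $r_0\in(0,R)$ provided $c$ is chosen small enough. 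Two limiting regimes guide the choice. First, as $r_0\to 0^+$, using $y(r)=r+O(r^3)$ near the north pole and the substitution $r=r_0\sec\psi$ on a small neighborhood of $0$, one obtains the asymptotic $\theta(t_0)=\pi/2 + r_0\,K(c) + O(r_0^2)$, where $K(c)=\int_0^{R^*}\bigl[y(r)^{-2}-r^{-2}\bigr]dr - 1/R^*$; because $y(r)\sim\ell-r$ near $r=\ell$, the integral of $y^{-2}$ diverges as $R^*\to\ell$, so $K(c)\to+\infty$ as $c\to 0^+$. Second, as $r_0\to R^-$, the integral tends to $I_{\mathrm{lim}}(c):=\int_R^{R^*}c\,dr/(y(r)\sqrt{y(r)^2-c^2})$, and the analogous substitution near both poles yields $I_{\mathrm{lim}}(c)\to\pi$ as $c\to 0^+$. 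Consequently, once $c$ is small, $\theta(t_0)>\pi/2$ holds in open neighborhoods of both endpoints of $(0,R)$.

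The main obstacle is upgrading the two endpoint estimates to a uniform bound across the whole interval $r_0\in(0,R)$. I expect this to be resolved by combining continuity of $\theta(t_0;r_0,c)$ in $r_0$ with one of two strategies: a direct monotonicity computation, obtained by differentiating the Clairaut integral in $r_0$ after a change of variables that removes the square-root singularity at $r=r_0$ and then checking the sign of the resulting derivative; or a compactness/rescaling argument showing that in the limit $c\to 0$ the rescaled function $r_0\mapsto\theta(t_0;r_0,c)-\pi/2$ is bounded below by a positive constant on each compact subinterval of $(0,R)$. Either route, upon choosing $c$ small enough to satisfy both the endpoint estimates and the uniform interior lower bound, delivers $\theta(t_0)>\pi/2$ for every $r_0\in(0,R)$.
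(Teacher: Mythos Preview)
Your argument for (1)--(3) is correct and matches the paper's. For (4), however, you take a different route and leave it incomplete: you establish only the endpoint behavior of $\theta(t_0)$ as $r_0\to 0^+$ and $r_0\to R^-$, and explicitly flag the uniform bound over all of $(0,R)$ as an unresolved ``main obstacle.'' Neither of your two proposed strategies is carried out, and the monotonicity you hope for is not obvious---differentiating the Clairaut integral in $r_0$ produces competing terms whose sign depends on the profile $y$. Moreover, as $c\to 0$ the interval $(0,R)$ shrinks while the implied constants in your $O(r_0^2)$ term (which feel the behavior of $y$ near $R^*\to\ell$) may blow up, so a genuine uniformity argument is needed; you have not supplied one.

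The paper bypasses the integral analysis entirely with a Jacobi-field argument. Consider the one-parameter family $\gamma_s$ of geodesics issuing perpendicularly from the zero meridian at distance $s$ from $\tilde o$; the central geodesic $\gamma_0$ is the meridian $\theta=\pi/2$. The variation field along $\gamma_0$ is a Jacobi field $Z(r)=z(r)P(r)$ with $z(0)=-1$ and $z'(0)=0$. Since the first focal point of the zero meridian along $\gamma_0$ occurs strictly before the first conjugate point of $\tilde o$ (which is at $r=\ell$), there is some $r_1<\ell$ with $z(r_1)=0$, and the zero is simple, so $z$ changes sign there. Fix $r_2$ slightly larger than $r_1$; then for all sufficiently small $s>0$ the geodesic $\gamma_s$ has already crossed the meridian $\theta=\pi/2$ by parameter $r_2$. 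Now simply choose $R$ small enough that the associated $R^*$ satisfies $R^*>r_2$. This gives $\theta(t_0)>\pi/2$ for every $r_0\in(0,R)$ simultaneously, with no asymptotics and no uniformity issue to resolve. Your Clairaut approach may well be salvageable, but as written the proof of (4) has a real gap that the paper's focal-point argument avoids altogether.
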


\begin{proof}
Because $y(0)=0=y(\ell)$,  $y^\prime(0) = 1$, $y^\prime(\ell) = -1$, and $ y(r) > 0$ for $ 0<r<\ell$, it clearly follows that
 if $R$ is sufficiently close to $0$, then there exists an $R^\ast$ close to $\ell$ satisfying (1), (2) and (3).
 Assuming (1), (2) and (3), if   $0<r_0<R$ and $\gamma = (r(t),\theta(t))$ is a geodesic starting at $(r_0,0)$ perpendicularly to the meridian, then $r(t)$ increases until it reaches the value $r_1$ where $y(r_1)=y(r_0)$.  By (1), (2) and (3), $r_1 > R^\ast$.  Thus there exists a $t_0$ with $r(t_0)=R^\ast$.
 To complete the proof it suffices to  show how to pick $R$ small enough so that $ \theta(t_0) > \frac \pi 2$ also holds.
 Observe that if $R$ approaches $0$, then $R^\ast$ will approach $\ell$. 

Let $\sigma$ be the unit speed geodesic through the origin $\tilde o$ of $\widetilde M$ whose trace is the union of the two meridians $\theta = 0$ and $\theta=\pi$.  Assume $\sigma$  is oriented so that $\theta(\sigma(s))$ is equal to $0$ for $s >0$ and to $\pi$ for $s<0$.
Consider the one--parameter family of geodesics $\gamma_s$ such that $\gamma_s(0) = \sigma(s)$ and $\gamma_s^\prime(0)$ is the unit vector  perpendicular to $\sigma$ that points into $\widetilde M^+$. In particular, $\gamma_0$ is the meridian $\theta= \frac \pi 2$.
 The variation vector field of this one--parameter family  restricts to  the Jacobi field $Z$ along $\gamma_0$ satisfying the initial conditions $Z(0) = \sigma^\prime(0)$ and $Z^\prime(0) = 0$.  Thus $Z$ is perpendicular to $\gamma_0$.  Let $P(r)$ denote the parallel  unit vector field perpendicular to $\gamma_0$ along $\gamma_0$  of the form  $P(r) = \frac1{y(r)}\frac{\partial}{\partial \theta}$ for $0<r<\ell$. Then 
 $Z(r) = z(r)P(r)$ 
  for $0\leq r \leq \ell$, 
where $z(r)$ is a function satisfying $z(0)=-1$ and $z^\prime(0)=0$.
Because the first focal point of $\sigma$ along $\gamma_0$ must occur before the first conjugate point of $\sigma(0)$, there exists an $ 0< r_1 <\ell$ such that $z(r_1) =0$. Since the zeros of a non--trivial Jacobi field are simple, it follows that $z(r)$ changes sign at $r=r_1$.  Therefore if $r_2$ is chosen a little bit larger than $r_1$, then for all sufficiently small $s$, the geodesics $\gamma_s$ will cross $\gamma_0$ at some parameter value $r < r_2$.  Thus there exists  a small $R>0$ so that the theta coordinates satisfy $\theta(\gamma_s(r_2)) > \frac \pi 2$ for all $0<s<R$, that is,  $\gamma_s$ must cross the $\theta = \frac \pi 2$ meridian. Therefore (4) is satisfied by choosing $R$ small enough to ensure that $ r_2 < R^\ast$.
 \end{proof}

\begin{proposition}\label{p:top}
Let $\widetilde M$ be a compact model surface, and let $ R$ and R$^\ast$ be chosen as in Lemma \ref{l:top}.  
Suppose that $\widetilde M$ has weaker radial attraction than $M$ and that the geodesics in $M$ have no bad encounters with the cut loci of $\widetilde M$.
If there exists a  point $p \in M$ with distance $d(o,p) > R^\ast$ such that $o$ is a critical point for the distance function from $p$, then $M$ is homeomorphic to a sphere. 
\end{proposition}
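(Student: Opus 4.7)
The plan is to apply Grove--Shiohama critical point theory to the distance function $L_p : M \to \mathbb{R}$. First, $M$ is compact: by Theorem~\ref{t:main}, $\widetilde M$ has weaker radial attraction than $M$, so Corollary~\ref{c:rauch1} gives $L_o \leq \ell < \infty$ on $M$, and with completeness this forces $M$ to be bounded and hence compact. Since $p$ is the global minimum of $L_p$ and $o$ is critical for $L_p$ by hypothesis, I would show that no other $q \in M$ is critical for $L_p$; Grove--Shiohama's gradient-like deformation then yields a homeomorphism $M \cong S^n$.

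Fix $q \in M \setminus\{o, p\}$, a minimizing geodesic $\gamma$ from $o$ to $q$, and set $v = -\gamma'(d(o,q)) \in T_qM$. I would argue by contradiction: if $q$ is critical for $L_p$, then for this $v$ there exists a minimizing geodesic $\sigma$ from $q$ to $p$ with $\angle(v, \dot\sigma(0)) \leq \pi/2$, and this angle is precisely the vertex angle $\angle q$ of the geodesic triangle $\triangle opq$ formed by $\gamma$, $\sigma$, and any minimizing $\tau$ from $o$ to $p$. Theorem~\ref{t:main} then produces an Alexandrov comparison triangle $\triangle\tilde o\tilde p\tilde q$ in $\widetilde M$ satisfying $\angle\tilde q \leq \angle q \leq \pi/2$ and $d(\tilde o,\tilde p) = d(o,p) > R^*$. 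The heart of the proof, and the principal obstacle, is to rule out the existence of such a configuration in $\widetilde M$ under the hypothesis $d(\tilde o, \tilde p) > R^*$.

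To do this I would analyze the lowermost minimizing geodesic $\tilde\sigma$ from $\tilde p$ to $\tilde q$ using Clairaut's invariant. Assuming $\angle\tilde q \leq \pi/2$, traversing $\tilde\sigma$ from $\tilde q$ toward $\tilde p$ strictly decreases $r = L_{\tilde o}$ initially, so there is an interior point $\tilde q^* \in \tilde\sigma$ where $r$ attains its minimum $r^*$ along $\tilde\sigma$ and Clairaut's relation forces $\tilde\sigma \perp \partial_r$ at $\tilde q^*$. From the Clairaut identity $y(r)\sin\phi = y(r^*)$ along $\tilde\sigma$, together with the bound $y(r(\tilde p)) < y(R) = y(R^*)$ (valid since $r(\tilde p) > R^*$ and, by Lemma~\ref{l:top}, $y > y(R)$ on $(R, R^*)$), it follows that $r^* < R$. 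Lemma~\ref{l:top}(4) then applies to the geodesic issuing perpendicularly from $\tilde q^*$, showing that the $\theta$-coordinate along $\tilde\sigma$ advances by more than $\pi/2$ between $\tilde q^*$ and the first subsequent point where $r = R^*$. Combining this with the monotonicity of $\theta$ along $\tilde\sigma$ (Clairaut again), the requirement $\tilde\sigma \subset \widetilde M^+ = \{\,0 \leq \theta \leq \pi\,\}$ (since $\tilde\sigma$ is the lowermost minimizing geodesic), and the symmetric analysis of the arc of $\tilde\sigma$ on the other side of $\tilde q^*$, a case-by-case examination yields the desired contradiction. Thus $\angle\tilde q > \pi/2$, contrary to $\angle\tilde q \leq \pi/2$; so $q$ is not critical for $L_p$, and Grove--Shiohama concludes $M \cong S^n$.
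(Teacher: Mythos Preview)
Your approach differs from the paper's in a significant way: you apply critical-point theory to $L_p$, whereas the paper works with $L_o$.  Unfortunately, the Clairaut argument you sketch has a genuine gap, and the gap is tied to the fact that you never make essential use of the hypothesis that $o$ is a critical point of $L_p$.

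The problematic step is the inference ``$y(r^\ast) < y(R)$, hence $r^\ast < R$.''  From Clairaut you correctly get $y(r^\ast)\le y\bigl(r(\tilde p)\bigr)<y(R^\ast)=y(R)$, but by Lemma~\ref{l:top}(1)--(3) this only yields $r^\ast\notin[R,R^\ast]$, i.e.\ either $r^\ast<R$ \emph{or} $r^\ast>R^\ast$.  The second alternative genuinely occurs.  Indeed, if $d(o,q)>R^\ast$, then both endpoints $\tilde p,\tilde q$ lie in the cap $\{r\ge R^\ast\}$; since $y$ is strictly decreasing on $[R^\ast,\ell]$ this cap is strictly convex, so the minimizing geodesic $\tilde\sigma$ stays in it and $r^\ast>R^\ast$.  (Concretely: take $\tilde q$ on the zero meridian with $d(\tilde o,\tilde q)>d(\tilde o,\tilde p)$; then $\measuredangle\tilde q=0\le\pi/2$ and $\tilde\sigma$ is a meridian arc with $r$ monotone.)  In this regime there is no interior point where $\tilde\sigma$ is perpendicular to a meridian at radius $<R$, Lemma~\ref{l:top}(4) does not apply, and your ``symmetric analysis on the other side of $\tilde q^\ast$'' is vacuous.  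Thus $\measuredangle\tilde q\le\pi/2$ together with $d(\tilde o,\tilde p)>R^\ast$ is \emph{not} contradictory in $\widetilde M$, and your contradiction does not close.

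The paper's proof avoids this by working with $L_o$ and splitting into two regimes.  For $d(o,q)\ge R^\ast$ it compares $q$ with the (unique) maximum $x_0$ of $L_o$ rather than with $p$, so that \emph{both} top angles are $\le\pi/2$, and the strict convexity of $\{r\ge R^\ast\}$ forces $\measuredangle\tilde q>\pi/2$.  For $0<d(o,q)\le R^\ast$ it uses precisely the hypothesis you set aside: criticality of $o$ for $L_p$ allows one to choose $\tau$ with $\measuredangle o\le\pi/2$, and then Lemma~\ref{l:top}(4), applied to the arc of $\tilde\sigma$ from its perpendicular point up to $r=R^\ast$ on the $\tilde p$ side, gives $\measuredangle\tilde o=\theta(\tilde q)>\pi/2$, contradicting the base-angle comparison $\measuredangle\tilde o\le\measuredangle o$.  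Note that only one application of Lemma~\ref{l:top}(4) is needed here, because the relevant inequality is on $\measuredangle\tilde o$, not on a total $\theta$-variation exceeding $\pi$.
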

 
 \begin{proof}  
 On account of Theorem \ref{t:MRT}, we may assume $L_o$ attains its maximum value $r_{max}$ with 
 $ R^\ast < r _{max} < \ell$.  Because of Lemma \ref{l:top}(3), $y$ is strictly decreasing on $[R^\ast,\ell]$. Hence  for each $R^\ast \leq r \leq \ell$, the geodesic ball $\{ x \in \widetilde M : d(\tilde o,x) \geq r\}$ is strictly convex.  Therefore, the maximum of $L_o$ is attained at a unique point of $M$.  For if it is attained at two points $x_1$ and $x_2$, then let $\triangle\tilde o \tilde x_1 \tilde x_2$ be the Alexandrov triangle corresponding to $\triangle ox_1x_2$.  Thus by the strict convexity of the ball $\{ x \in \widetilde M : d(\tilde o, x) \geq r_{max} \}$ and by Alexandrov convexity, we obtain the contradiction
 $$ r_{max} < d(\tilde o, \tilde\sigma(t)) \leq d(o, \sigma(t)) \leq r_{max}$$
 for $ 0 < t < d(x_1,x_2)$ where $\sigma$ and $\tilde \sigma$ are the corresponding sides of $\triangle ox_1x_2$ and $\triangle\tilde o \tilde x_1 \tilde x_2$ respectively. Therefore the maximum of $L_o$ is attained at a unique point $x_0 \in M$.
 
 Now there are no critical points of $L_o$ in the set $\{ x \in M : d( o,x) \geq R^\ast\}$ other than $x_0$.  If there were, let $q$ be another critical point. Let $\sigma$ be a minimizing geodesic joining $x_0$ to $q$.  Since $x_0$ and $q$ are critical points of $L_o$, we may pick minimizing geodesics $\tau$ from $o$ to $ x_0$ and $\gamma$ from $o$ to $q$ so that $\measuredangle x_0 \leq \frac \pi 2$ and
 $\measuredangle q \leq \frac \pi 2$.
 Consider the geodesic triangle $\triangle ox_0q$ and the corresponding Alexandrov triangle $\triangle \tilde o\tilde x_0\tilde q$.
  Because $ R^\ast \leq d( \tilde o, \tilde q) < d(\tilde o, \tilde x_0)$,  the geodesic $\tilde \sigma$ is contained in the convex disk $\{ x \in \widetilde M : d(\tilde o, x) \geq d(\tilde o, \tilde q)\}$.  Thus its tangent vector at $\tilde q$ points into the disk. On the other hand
 $\tilde\gamma$ is a segment of the meridian starting at $\tilde o$ and thus its tangent vector  points out of the ball at $\tilde q$  and is in fact perpendicular to the boundary of the disk.  Therefore
  $\measuredangle \tilde q > \frac \pi 2$ which contradicts the top angle comparison $\measuredangle \tilde q \leq \measuredangle q$.  Therefore there are no critical points of $L_o$ in the set $\{ x \in \widetilde M : d(\tilde o,x) \geq R^\ast\}$ other than $x_0$.
 
Next we show that there are no  critical points of $L_o$ in the geodesic ball $\{ x \in M : d( o,x) \leq R^\ast\}$ other than $o$.  Here we use the hypothesis  that $o$ is a critical point of $p$.
Suppose there exists a critical point $q$ of $L_o$ with  $0 <d(o,q) \leq R^\ast$.
We construct a geodesic triangle $\triangle opq$ in the following way.
Let $\sigma$ be any minimizing geodesic joining $p$ to $q$.  Since $q$ is a critical point of $L_o$, we may choose a minimizing geodesic $\gamma$ joining $o$ to $q$ so that  $\measuredangle q \leq \frac \pi 2$.
Finally since $o$ is assumed to be a critical point of the distance function from $p$, we may choose $\tau$ joining $o$ to $p$ such that $\measuredangle o \leq \frac \pi 2$.  Let $\triangle\tilde o \tilde p \tilde q$ be the Alexandrov triangle corresponding to $\triangle opq$. The geodesic $\tilde\sigma$ joining $\tilde p$ to $\tilde q$ makes an angle $\measuredangle \tilde q \leq \measuredangle q \leq \frac \pi 2$ with the meridian $\tilde\gamma$ joining $\tilde o$ to $\tilde q$. 
 Because $\tilde\sigma$ starts at a point in $\{ x \in \widetilde M : d(\tilde o,x) > R^\ast\}$, by Lemma \ref{l:top}(2), $\tilde \sigma$ must enter the set $\{ x \in \widetilde M : d(\tilde o,x) < R\}$ and attain a closest distance $r_0<R$ to $\tilde o$ before reaching $\tilde q$.  Thus $\tilde\sigma$ contains a segment which is perpendicular to a meridian at distance $r_0 < R$ from $\tilde o$ and extends to a point at distance $R^\ast$ from $\tilde o$. 
 Hence by Lemma \ref{l:top}(4), the coordinate $\theta$ along $\tilde \sigma$  increases by more than $\frac {\pi}{2}$ along $\tilde\sigma$.  Therefore
 $\measuredangle \tilde o > \frac \pi 2$ which contradicts the angle comparison at the base, $\measuredangle \tilde o \leq \measuredangle o$. 

 The non-existence of  critical points of $L_o$ in  $\{x \in M : 0 < d(o,x) <r_{max}\}$ implies that $M$ is homeomorphic to a sphere by \cite{MG}.
 \end{proof}
  
\subsection{Topological Ends}

\begin{proposition}\label{p:7.3}
Let $\widetilde M$ be a noncompact model surface with metric  $dr^2 + y(r)^2 d\theta^2$ for $0 < r < \infty$.   Suppose that $\widetilde M$ has weaker radial attraction than $M$ and that minimal geodesics  in $M$ have no bad encounters with cut loci in $\widetilde M$.  If  $\liminf_{r\rightarrow \infty} \frac {y(r)}{r}  < \frac {2}{\pi}$, then $M$ has at most one end.
\end{proposition}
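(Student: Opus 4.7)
The plan is to argue by contradiction. Suppose $M$ has at least two ends. Then there is $R_0 > 0$ such that $M \setminus \overline{B(o,R_0)}$ has two distinct unbounded components $E_1, E_2$; pick sequences $p_n \in E_1$, $q_n \in E_2$ with $d(o,p_n), d(o,q_n) \to \infty$, and let $\sigma_n : [0, L_n] \to M$ be minimizing unit-speed geodesics joining them. Each $\sigma_n$ must cross $\overline{B(o,R_0)}$ at some parameter $t_n$; after shifting so that $\sigma_n(t_n)$ is the new origin and noting that $t_n, L_n - t_n \to \infty$ (since $t_n \geq d(o,p_n) - R_0$), I would extract by Arzel\`a--Ascoli on the compact sphere bundle over $\overline{B(o,R_0)}$ a subsequential limit geodesic $\sigma : \mathbb{R} \to M$ that is minimizing on every compact interval and satisfies $d(o, \sigma(0)) \leq R_0$. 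Thus $\sigma$ is a line in $M$.

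For each large $t > 0$, consider the triangle $\triangle opq$ with $p = \sigma(-t)$, $q = \sigma(t)$, so that $d(p,q) = 2t$ and $t - R_0 \leq d(o,p), d(o,q) \leq t + R_0$. By Theorem \ref{t:main}, an Alexandrov triangle $\triangle \tilde o \tilde p \tilde q$ exists in $\widetilde M$, with $\tilde p$ on the zero meridian and $\tilde q \in \widetilde M^+$. Set $r_p = d(\tilde o, \tilde p) = d(o,p)$ and $r_q = d(\tilde o, \tilde q) = d(o,q)$; the triangle inequality in $\widetilde M$ gives $r_p + r_q \geq d(\tilde p,\tilde q) = 2t$, while $r_p, r_q \leq t + R_0$.

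The next step is to bound $d(\tilde p, \tilde q)$ from above by an explicit path in $\widetilde M$: go radially outward from $\tilde p$ to the parallel $r = s$, then along that parallel to the meridian through $\tilde q$, then radially inward to $\tilde q$. Since $\tilde q \in \widetilde M^+$ has angular coordinate at most $\pi$, the parallel segment has length at most $\pi y(s)$, and the whole path has length at most $2s - r_p - r_q + \pi y(s)$ whenever $s \geq \max(r_p, r_q)$. Combining with $r_p + r_q \geq 2t$ yields
\begin{equation*}
2t \;=\; d(\tilde p, \tilde q) \;\leq\; 2s - 2t + \pi y(s), \qquad \text{i.e.,} \qquad 4t \;\leq\; 2s + \pi y(s).
\end{equation*}

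Finally, exploit the hypothesis: choose a sequence $s_n \to \infty$ along which $y(s_n)/s_n \to L < 2/\pi$, and take $t = s_n - R_0$ (legal once $s_n > R_0$, since then $\max(r_p,r_q) \leq t + R_0 = s_n$). The inequality becomes $4(s_n - R_0) \leq 2s_n + \pi y(s_n)$, equivalently
\begin{equation*}
\frac{y(s_n)}{s_n} \;\geq\; \frac{2}{\pi} \;-\; \frac{4R_0}{\pi s_n}.
\end{equation*}
Letting $n \to \infty$ forces $L \geq 2/\pi$, contradicting the hypothesis. The main technical step is the line construction; once that is in hand, the remaining work is an elementary bound on distances in the model, and the hypothesis $\liminf y(r)/r < 2/\pi$ is used precisely to make the parallel detour of length $\pi y(s)$ strictly shorter than $2s$ for arbitrarily large $s$, which is exactly what defeats the inequality $4t \leq 2s + \pi y(s)$.
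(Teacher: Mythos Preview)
Your proposal is correct and follows essentially the same approach as the paper: both obtain a line $\sigma$ from the two-ends assumption, form the triangle $\triangle o\,\sigma(-t)\,\sigma(t)$, pass to the Alexandrov triangle in $\widetilde M$, and bound $d(\tilde p,\tilde q)=2t$ from above by a path going along a meridian, across a parallel circle (length $\le \pi y$), and back along a meridian, thereby forcing $\liminf y(r)/r \ge 2/\pi$. The only cosmetic difference is that the paper takes the parallel at the specific radius $r=t$, so the two meridian legs have length at most $d_0$ each and one gets $2t \le 2d_0 + \pi y(t)$ directly, avoiding your detour through the inequality $r_p + r_q \ge 2t$ and the auxiliary parameter $s$; your version is a harmless variant of the same estimate.
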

 
 \begin{proof}
 Assume that $M$ has at least two ends.  Then there exists a geodesic line $\sigma : \mathbf{R} \rightarrow M$, that is, $d(\sigma(t), \sigma(s)) = |s-t|$ for all $ s,t \in \mathbf{R}$. We can assume that $\sigma(0)$ is the closest point to $o$, and $ d_0 = d(o,\sigma(0))$. Let $t > 0$ and set $\sigma(-t) = p$ and $\sigma(t) = q$.  Thus $ d(p,q) = 2t$ and  by the triangle inequality we have
 \begin{eqnarray}
t - d_0  & \leq& d(o,p) \leq t + d_0  \cr
t- d_0  & \leq& d(o,q) \leq t + d_0. \cr
\end{eqnarray}
Let $\triangle\tilde o\tilde p\tilde q$ denote the Alexandrov triangle corresponding to $\triangle opq$. Then $2t = d(\tilde p, \tilde q) \leq 2 d_0 + \pi y(t)$ if $ t> d_0$ because $\tilde p$ and $\tilde q$ can both be connected to a point on  the parallel $r=t$ by a meridian segment of length at most $d_0$, and the distance between these two points on the parallel is 
at most half the length of the parallel, that is, $\pi y(t)$. In other words, $\frac 2 \pi \leq \frac {2d_0}{t\pi} + \frac {y(t)}{t}$. This leads to the contradiction:   $\liminf_{t\rightarrow \infty} \frac {y(t)}{t}  \geq \frac {2}{\pi}$.
 \end{proof}  
 
 \begin{remark} 
 Any condition that precludes the existence of a geodesic line in $\widetilde M$, {\it e.g.} positive total curvature, could also be used in Proposition \ref{p:7.3}. See \cite{K-O}.
 \end{remark}

\section{Generic Geodesics}

Let $M$ be a complete $n$--dimensional Riemannian manifold with base point $o$.  Recall that the cut locus $C(o)$ of $o$ is the union of a closed singular subset $\mathcal{S}$, whose Hausdorff $(n-1)$-- dimensional  measure is zero, and a relatively open regular subset $\mathcal{R}$ which is a smooth $(n-1)$--dimensional submanifold of $M$. The elements of $\mathcal{S}$ are either conjugate cut points or  cut points  joined to $o$ by at least three minimizing geodesics, while
the elements of $\mathcal{R}$ are cut points joined to $o$ by exactly two nonconjugate  minimizing geodesics.  At a regular cut point, the tangent plane to $\mathcal{R}$ makes the same angle with both of these geodesics.  

\begin{definition}
A minimizing geodesic $\sigma$ from $p \in M$ to $ q\in M$ is \emph{generic} if the interior of $\sigma$ is transverse to $\mathcal{R}$ and disjoint from $\mathcal{S}$. This allows either endpoint $p$ or $q$ to be in $C(o)$. A geodesic triangle $\triangle opq$ is \emph{generic} if the side $\sigma$ joining $p$ to $q$ is generic.
\end{definition}

\begin{proposition}\label{p:dense}
The generic minimizing geodesics emanating from $p$ are dense.
\end{proposition}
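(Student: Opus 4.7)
The plan is to parametrize the minimizing geodesics emanating from $p$ by their initial direction in the unit tangent sphere $S_p \subset T_pM$, and then show that the subset of directions yielding a generic geodesic has full Lebesgue measure (hence is dense) in $S_p$. Work in geodesic polar coordinates $(t,v) \in (0,\infty) \times S_p$ about $p$: the map $\Phi(t,v) = \exp_p(tv)$ is a diffeomorphism from the open segment domain $D = \{(t,v) : 0 < t < \bar t(v)\}$ onto $M \setminus (C(p)\cup\{p\})$, where $\bar t(v)$ is the distance from $p$ to its cut point in direction $v$. In these coordinates every minimizing geodesic from $p$ in direction $v$ is the coordinate ray $t \mapsto (t,v)$ for $0 < t < \bar t(v)$, and its genericity up to parameter $r$ is decided by how this ray meets the pulled-back sets $\widetilde{\mathcal{S}} := \Phi^{-1}(\mathcal{S}) \cap D$ and $\widetilde{\mathcal{R}} := \Phi^{-1}(\mathcal{R}) \cap D$.

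First I handle avoidance of $\mathcal{S}$. Since $\Phi$ is a diffeomorphism on $D$, the hypothesis $\mathcal{H}^{n-1}(\mathcal{S}) = 0$ carries over to $\mathcal{H}^{n-1}(\widetilde{\mathcal{S}}) = 0$. The projection $\pi(t,v) = v$ from $(0,\infty) \times S_p$ onto $S_p$ is $1$-Lipschitz for the product metric, so $\mathcal{H}^{n-1}(\pi(\widetilde{\mathcal{S}})) \le \mathcal{H}^{n-1}(\widetilde{\mathcal{S}}) = 0$. Because $S_p$ is $(n-1)$-dimensional, $\pi(\widetilde{\mathcal{S}})$ is a Lebesgue-null set in $S_p$, and every $v$ in the complementary full-measure set $G_1$ gives a geodesic whose interior is disjoint from $\mathcal{S}$.

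Next I handle transversality to $\mathcal{R}$. Since $\mathcal{R}$ is a smooth $(n-1)$-submanifold of $M \setminus (C(p)\cup\{p\})$ and $\Phi$ is a diffeomorphism on $D$, the set $\widetilde{\mathcal{R}}$ is a smooth $(n-1)$-submanifold of $D$. Apply Sard's theorem to the smooth map $\pi|_{\widetilde{\mathcal{R}}}: \widetilde{\mathcal{R}} \to S_p$, a map between manifolds of equal dimension $n-1$: its set $V$ of critical values has Lebesgue measure zero in $S_p$. The key linear-algebra observation is that a regular value $v \notin V$ is characterized by the property that at every $(t,v) \in \widetilde{\mathcal{R}}$ the differential $d\pi$ restricts to an isomorphism on $T_{(t,v)}\widetilde{\mathcal{R}}$, which is equivalent to $\partial_t \notin T_{(t,v)}\widetilde{\mathcal{R}}$. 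This is precisely the transversality of the coordinate ray to $\widetilde{\mathcal{R}}$ at each intersection point, i.e., of the geodesic to $\mathcal{R}$. Thus every $v \in G_2 := S_p \setminus V$ yields a geodesic whose interior meets $\mathcal{R}$ transversally.

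Intersecting, $G := G_1 \cap G_2$ is the complement of a union of two Lebesgue-null subsets of $S_p$, hence is dense (and has full measure). For any $v \in G$ and any $r \in (0, \bar t(v))$ the geodesic $t \mapsto \exp_p(tv)$, $t \in [0,r]$, is generic. Finally, given a minimizing geodesic $\sigma_0$ from $p$ to $q_0$ with initial direction $v_0$ and length $r_0 \le \bar t(v_0)$, the continuity of $\bar t$ and the density of $G$ let us choose $v \in G$ arbitrarily close to $v_0$ and $r < \bar t(v)$ arbitrarily close to $r_0$, producing a generic minimizing geodesic close to $\sigma_0$ whose endpoint is close to $q_0$. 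The main obstacle is the two measure-theoretic reductions: verifying that the $1$-Lipschitz projection $\pi$ preserves $\mathcal{H}^{n-1}$-nullity, and correctly translating Sard's regular-value condition for $\pi|_{\widetilde{\mathcal{R}}}$ into the pointwise transversality statement for the radial ray.
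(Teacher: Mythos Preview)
Your proof is correct and follows essentially the same approach as the paper: the paper's map $\Upsilon: M\setminus(C(p)\cup\{p\})\to\Sigma_p$, $\Upsilon(q)=\sigma_q'(0)$, is exactly your projection $\pi\circ\Phi^{-1}$, and the paper likewise argues that $\Upsilon(\mathcal{S})$ is $\mathcal{H}^{n-1}$-null by the Lipschitz property and that the critical values of $\Upsilon|_{\mathcal{R}}$ are null by Sard. Your version is slightly more detailed in spelling out the transversality--regular-value equivalence and the final approximation step, but the two arguments are the same in substance.
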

\begin{proof} Let $\Sigma_p \subset T_pM$ denote the $(n-1)$--dimensional sphere of  unit tangent vectors at $p$.
The map $\Upsilon : M \backslash (C(p) \cup \{p\}) \rightarrow \Sigma_p $ defined by
$\Upsilon(q) = \sigma_q^\prime(0)$ where $\sigma_q$ is the unique minimizing geodesic joining $p$ to $q\notin C(p)\cup \{p\}$, is smooth and hence locally  Lipschitz. Thus the Hausdorff $(n-1)$--dimensional measure of $\Upsilon(\mathcal{S})$ is zero, and, by Sard's Theorem, the set of  critical values of the restriction
$\Upsilon|\mathcal{R}$ has $(n-1)$--dimensional measure zero as well. Clearly, if $\Upsilon(q)$ is not a critical value  of $\Upsilon|\mathcal{R}$, then $\sigma_q$ is transverse to $\mathcal{R}$, and if also $\Upsilon(q) \notin \Upsilon(\mathcal{S})$ then $\sigma_q$ is disjoint from $\mathcal{S}$. The set of all such $q$ have $n$--dimensional measure zero in $M$.  Thus the generic minimizing geodesics emanating from $p$ are dense.
\end{proof}

\begin{proposition}\label{p:disjoint}
Let $\sigma: [0, l] \to M$ be a generic minimizing geodesic emanating from $p$.  The two--sided derivative $(L_o\circ \sigma)^\prime(t)$ exists for all $0<t<l$, if and only if the interior of $\sigma$ is disjoint from $C(o)$.
\end{proposition}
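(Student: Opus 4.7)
The plan is to use the first variation formula for the distance function $L_o$ at a cut point, together with the structural facts about regular cut points recalled just before the statement.

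For the easy direction, I would note that $L_o$ is smooth on $M\setminus (C(o)\cup\{o\})$. Hence if $\sigma((0,l))$ is disjoint from $C(o)$, then $L_o\circ\sigma$ is smooth on $(0,l)$ and the two--sided derivative exists at every interior $t$.

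For the converse, suppose some $t_0 \in (0,l)$ satisfies $\sigma(t_0) \in C(o)$. Since $\sigma$ is generic, its interior is disjoint from $\mathcal{S}$, so $\sigma(t_0)$ lies in the regular part $\mathcal{R}$, and there are exactly two nonconjugate unit--speed minimizing geodesics $\gamma_1,\gamma_2:[0,r]\to M$ from $o$ to $q:=\sigma(t_0)$, where $r=L_o(q)$. The first step is to invoke the standard first variation formula for the distance function $L_o$ along the curve $\sigma$ at $t_0$, which gives
\begin{equation*}
(L_o\circ\sigma)^\prime_+(t_0) = -\max_{i=1,2}\langle \gamma_i^\prime(r),\sigma^\prime(t_0)\rangle, \quad (L_o\circ\sigma)^\prime_-(t_0) = -\min_{i=1,2}\langle \gamma_i^\prime(r),\sigma^\prime(t_0)\rangle.
\end{equation*}
Thus the two--sided derivative exists at $t_0$ if and only if $\langle \gamma_1^\prime(r)-\gamma_2^\prime(r),\sigma^\prime(t_0)\rangle =0$, i.e.\ $\sigma^\prime(t_0)$ is orthogonal to $\gamma_1^\prime(r)-\gamma_2^\prime(r)$.

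The second step is to identify this orthogonality condition geometrically. The statement just before the proposition that ``the tangent plane to $\mathcal{R}$ makes the same angle with both of these geodesics'' is exactly the assertion that $\gamma_1^\prime(r)-\gamma_2^\prime(r)$ is a nonzero normal vector to $T_q\mathcal{R}$ (set up coordinates so the hyperplane is horizontal and the two ingoing directions $\gamma_i^\prime(r)$ are mirror images through it). Consequently, the two--sided derivative exists at $t_0$ if and only if $\sigma^\prime(t_0) \in T_q\mathcal{R}$, i.e.\ $\sigma$ is tangent to $\mathcal{R}$ at $q$. But genericity forces $\sigma$ to be transverse to $\mathcal{R}$ at every interior meeting, so $\sigma^\prime(t_0)\notin T_q\mathcal{R}$, and the two--sided derivative fails to exist at $t_0$. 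This contradicts the assumption, proving the converse.

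The main obstacle is really only a bookkeeping one: making the identification of $\gamma_1^\prime(r)-\gamma_2^\prime(r)$ as a normal to $T_q\mathcal{R}$ rigorous from the ``same angle'' description, and citing a precise reference for the first variation formula giving the one--sided derivatives of $L_o$ at a non-conjugate cut point with finitely many minimizers (this is classical, e.g.\ via triangle-inequality variations as in the cited \cite[Corollary~2.3]{HI}). No curvature hypotheses on $M$ or any assumption on $\widetilde M$ are needed here.
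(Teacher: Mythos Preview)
Your argument is correct and follows essentially the same route as the paper. The paper packages the first-variation computation as a citation to \cite[Corollary~2.3]{HI} (two-sided derivative exists iff all minimizers make the same angle with $\sigma'$), while you unpack it explicitly via the one-sided formulas; both then observe that equality of the two inner products forces $\sigma'(t_0)\in T_q\mathcal{R}$, contradicting transversality. One minor quibble: with $\gamma_i$ parametrized from $o$ to $q$ and $\gamma_i'(r)$ the outward terminal velocity, the standard formula reads $(L_o\circ\sigma)_+'(t_0)=\min_i\langle\gamma_i'(r),\sigma'(t_0)\rangle$ rather than $-\max_i$, but this sign convention does not affect the conclusion that existence of the two-sided derivative is equivalent to $\langle\gamma_1'(r)-\gamma_2'(r),\sigma'(t_0)\rangle=0$.
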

\begin{proof}
By \cite[Corollary 2.3]{HI}, the two--sided derivative $(L_o\circ \sigma)^\prime(t)$ exists if and only if every minimizing geodesic from $o$ to $\sigma(t)$ makes the same angle with the tangent vector $\sigma^\prime(t)$.  Thus if $ \sigma(t) \notin C(o)$ then $(L_o\circ \sigma)^\prime(t)$ exists  because there is only one minimizing geodesic from $o$ to $\sigma(t)$. Conversely, if $(L_o\circ \sigma)^\prime(t)$ exists  and $ \sigma(t) \in C(o)$, then, since $\sigma$ is generic, $\sigma(t) \in \mathcal{R}$. Hence there are exactly two minimizing geodesics from $o$ to $\sigma(t)$, and they make the same angle with $\sigma^\prime(t)$. This implies that $\sigma^\prime(t)$ is tangent to $\mathcal{R}$ which contradicts the transversality condition in the definition of generic geodesics. Therefore the existence of $(L_o\circ \sigma)^\prime(t)$ implies $\sigma(t) \notin C(o)$.
\end{proof}

\begin{proposition}
Let $\sigma: [0, l] \to M$ be a generic minimizing geodesic emanating from $p$. Then 
$\lim_{t \to t_0^+} (L_o\circ \sigma)_+^\prime(t) = (L_o\circ\sigma)_+^\prime(t_0)$ for all $t_0 \in (0,l)$.
\end{proposition}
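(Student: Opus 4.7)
The plan is to split into cases based on the location of $\sigma(t_0)$. If $\sigma(t_0) \notin C(o) \cup \{o\}$, then since $C(o) \cup \{o\}$ is closed, $L_o$ will be smooth on a neighborhood of $\sigma(t_0)$; hence $L_o \circ \sigma$ will be smooth near $t_0$ and its right derivative automatically continuous there. The remaining degenerate sub-case $\sigma(t_0) = o$ can be dispensed with directly using $L_o(\sigma(t)) = |t - t_0|$ on a neighborhood of $t_0$.

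The substantive case is $\sigma(t_0) \in C(o)$. Since $\sigma$ is generic, I will use that $\sigma(t_0) \in \mathcal{R}$ and $\sigma^\prime(t_0) \notin T_{\sigma(t_0)}\mathcal{R}$. Because $\mathcal{R}$ has codimension one, transverse intersections of the curve $\sigma$ with $\mathcal{R}$ are isolated in $(0,l)$; combined with $\sigma((0,l)) \cap \mathcal{S} = \emptyset$, this yields $\epsilon > 0$ with $\sigma((t_0, t_0+\epsilon)) \cap (C(o) \cup \{o\}) = \emptyset$. On that interval $L_o \circ \sigma$ is smooth, and if $\gamma_t$ denotes the unique unit-speed minimizing geodesic from $o$ to $\sigma(t)$, the first variation formula gives $(L_o \circ \sigma)^\prime(t) = \langle \gamma_t^\prime(L_o(\sigma(t))), \sigma^\prime(t)\rangle$.

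Next I will identify the limit as $t\to t_0^+$. Since $\sigma(t_0)\in \mathcal{R}$ there are exactly two minimizing geodesics $\gamma^1, \gamma^2$ from $o$ to $\sigma(t_0)$. Transversality of the crossing forces $\sigma((t_0, t_0+\epsilon))$ to lie in a single connected component of a small neighborhood of $\sigma(t_0)$ minus the regular cut locus, and on that component the correspondence $q \mapsto \gamma_q$ extends continuously (in the $C^1$ topology) to $\sigma(t_0)$, selecting one of $\gamma^1, \gamma^2$, which I will call $\gamma_\infty$. Continuity of the inner product then yields
\begin{equation*}
\lim_{t \to t_0^+} (L_o \circ \sigma)^\prime(t) = \langle \gamma_\infty^\prime(L_o(\sigma(t_0))), \sigma^\prime(t_0)\rangle.
\end{equation*}

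To close the argument I will apply the first variation formula once more, this time at $t = t_0$, to the smooth one-parameter family $\{\gamma_t\}_{t \in [t_0, t_0 + \epsilon)}$, whose left endpoint is fixed at $o$, whose right endpoint is $\sigma(t)$ moving with velocity $\sigma^\prime(t_0)$ at $t = t_0$, and whose length equals $L_o(\sigma(t))$. This gives
\begin{equation*}
(L_o \circ \sigma)_+^\prime(t_0) = \langle \gamma_\infty^\prime(L_o(\sigma(t_0))), \sigma^\prime(t_0)\rangle,
\end{equation*}
matching the previous limit. The main obstacle I anticipate is the continuous extension of the selection $q \mapsto \gamma_q$ to $\sigma(t_0)$ from a single side of $\mathcal{R}$; this should follow from the standard structure theorem for regular cut points, in which the two minimizing geodesics are precisely the $C^1$ limits of the unique minimizers from each of the two local sides of $\mathcal{R}$, and transversality ensures that $\sigma(t)$ stays on one fixed side for $t \in (t_0, t_0 + \epsilon)$.
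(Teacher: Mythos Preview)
Your proposal is correct and follows essentially the same approach as the paper: identify the geodesic $\gamma_\infty$ (the paper calls it $\gamma_0$) that realizes $(L_o\circ\sigma)_+'(t_0)$, argue that the minimizers $\gamma_t$ from $o$ to $\sigma(t)$ converge to $\gamma_\infty$ as $t\to t_0^+$, and conclude via the first variation formula. The only organizational difference is that you first invoke transversality to secure an interval $(t_0,t_0+\epsilon)$ on which $\sigma$ avoids $C(o)$ and then compute $(L_o\circ\sigma)_+'(t_0)$ by applying the first variation formula to the one-sided family $\{\gamma_t\}$, whereas the paper works directly with an arbitrary sequence $t_k\downarrow t_0$, selects $\gamma_k$ realizing $(L_o\circ\sigma)_+'(t_k)$ (allowing $\sigma(t_k)\in C(o)$), and appeals to the known identification of $(L_o\circ\sigma)_+'(t_0)$ with the cosine of the angle to the geodesic on the far side of $\mathcal{R}$; the paper's version is slightly shorter because it avoids the isolated-crossing step, while yours makes the selection of $\gamma_\infty$ and the side-of-$\mathcal{R}$ reasoning more explicit.
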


\begin{proof}
 If $\sigma(t_0) \in C(o)$, then $\sigma(t_0)$ is a regular cut point. In case $\sigma(t_0) \in C(o)$, let $\gamma_0$ be the 
 minimizing geodesic joining $o$ to $\sigma(t_0)$ such that the tangent vectors $\sigma^\prime(t_0)$ and $\gamma_0^\prime$ point to opposite sides of $\mathcal{R}$.  Otherwise, let $\gamma_o$ be the unique minimizing geodesic joining $o$ to $\sigma(0)$. Thus the $(L_o\circ \sigma)_+^\prime(t_0)$ is  equal to the cosine of the angle between $\sigma$ and $\gamma_0$.
 Let $t_k$ be a decreasing sequence converging to $t_0$ as $k\to\infty$, and  let $\gamma_k$ be a minimizing geodesic joining $o$ to $\sigma(t_k)$ chosen so that the cosine of the angle between $\sigma$ and $\gamma_k$ is equal to $(L_o\circ\sigma)_+^\prime(t_k)$.  Then clearly, $\gamma_k$ converges to $\gamma_0$ as $k \to \infty$.  Therefore $\lim_{t \to t_0^+} (L_o\circ \sigma)_+^\prime(t) = (L_o\circ\sigma)_+^\prime(t_0)$.
\end{proof}

\begin{theorem}
Let $(M,o)$ be a pointed Riemannian manifold and $(\widetilde M,\tilde o)$ a model surface having weaker radial attraction than $(M,o)$.
Let $p\in M$ and let $\tilde p \in \widetilde M$ be the point on the 0--meridiam with $d(\tilde p, \tilde o) =  d(p,o)$.  Assume that every  minimizing geodesic emanating from $p$ approaches the cut locus of $C(\tilde p)$ from the far side.  Then for every geodesic triangle $\triangle opq$ there exists a corresponding Alexandrov triangle $\triangle \tilde o \tilde p\tilde q$ in $\widetilde M$. 
\end{theorem}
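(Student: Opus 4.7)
The plan is to show that under the stated hypotheses every minimizing geodesic emanating from $p$ has no bad encounter with $C(\tilde p)$; once this is in hand, the machinery of Section 5 (specifically Lemmas \ref{l:nobad}, \ref{l:5.5}, and \ref{l:5.7}, applied with $p$ playing the role of the distinguished point) immediately yields the Alexandrov triangle satisfying properties (1)--(5) for every $\triangle opq$.

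To establish the no-bad-encounter statement along a fixed minimizing geodesic $\sigma : [0,l] \to M$ from $p$, I would run a continuous induction in the parameter $t$. Define
\[
 E = \bigl\{\, t \in (0,l] : \text{for every } s \in (0,t],\ \triangle op\sigma(s)\text{ admits an Alexandrov triangle in }\widetilde M \text{ satisfying (1), (2), (3)} \,\bigr\}.
\]
Corollary \ref{c:rauch1} gives $d(o,p) \leq \ell$, and since $\sigma(s)$ stays near $p$ for small $s$, the image $F(\sigma(s))$ is well away from $\widetilde F(C(\tilde p))$; weaker radial attraction combined with the slope-field characterization of Section 2 then produces Alexandrov triangles for small $s$, so $E$ contains a right neighborhood of $0$. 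Closedness of $E$ under its supremum is a consequence of continuity of the reference maps together with Lemma \ref{l:nobad}, since a bad encounter concerns only the behavior immediately to the right of a cut point.

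For openness, I would suppose $T := \sup E < l$ and derive a contradiction. If $F(\sigma(T))$ lies off $\widetilde F(C(\tilde p)\cap int(\widetilde M^+))$, then $\mathfrak{s}$ is smooth near $F(\sigma(T))$, and the differential inequality $(L_o\circ\sigma)_+^\prime(t) \leq \mathfrak{s}(F(\sigma(t)))$---valid on $(0,T)$ by the Proposition following Lemma \ref{l:ineq}---extends to $(0,T+\epsilon)$ by continuity and weaker radial attraction, producing Alexandrov triangles past $T$. Otherwise $F(\sigma(T)) = \widetilde F(\tilde q_0)$ for some $\tilde q_0 \in C(\tilde p)\cap int(\widetilde M^+)$, and $\sigma$ has an encounter with the cut locus at $T$. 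The far-side hypothesis, combined with Lemma \ref{l:farside} (whose Alexandrov-triangle premise is precisely the inductive assumption), gives $L_o(\sigma(s)) \geq L_{\tilde o}(\tilde\sigma^\uparrow(s))$ on a left neighborhood of $T$. Lemma \ref{l:6} then certifies that the encounter at $T$ is not bad, so Lemma \ref{l:nobad} extends the Alexandrov triangles slightly past $T$, contradicting $T = \sup E$. Hence $E = (0,l]$, no bad encounter occurs along $\sigma$, and Lemmas \ref{l:5.5} and \ref{l:5.7} upgrade the comparison to a full Alexandrov triangle satisfying (1)--(5).

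The principal obstacle is the degenerate subcase $\tilde\sigma^\downarrow = \tilde\sigma^\uparrow$ of Lemma \ref{l:6}, in which $\tilde q_0$ is an endpoint of a branch of $C(\tilde p)$ reached by a unique minimizing geodesic from $\tilde p$. Here Case (1) of Lemma \ref{l:6} is unavailable and one must verify the strict inequality $(L_o\circ\sigma)_-^\prime(T) < (L_{\tilde o}\circ\tilde\sigma^\uparrow)^\prime(T)$ demanded by Case (2). If equality held, Proposition \ref{p:geodcomp}(2) would pin $L_o\circ\sigma \equiv L_{\tilde o}\circ\tilde\sigma^\uparrow$ on an interval to the left of $T$; Corollary \ref{c:conjugate} together with Remark \ref{r:conjugate} (to handle possible meetings of the interior of $\sigma$ with $C(o)$) would then furnish a conjugate point of $\sigma$ at or before the minimizing parameter $T < l$, contradicting minimality on $[0,l]$. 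Making this rigidity argument fully rigorous---in particular controlling the interaction of $\sigma$ with $C(o)$ when the interior of $\sigma$ is not known to avoid cut points of $o$---is the central technical hurdle of the proof.
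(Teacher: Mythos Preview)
Your overall strategy matches the paper's: continuous induction along $\sigma$, with Lemma~\ref{l:farside} feeding the inductive Alexandrov information into Lemma~\ref{l:6} at each encounter, and Proposition~\ref{p:geodcomp} plus Corollary~\ref{c:conjugate} supplying the rigidity contradiction in the degenerate case $\tilde\sigma^\uparrow=\tilde\sigma^\downarrow$. You have also correctly located the genuine obstacle.

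The paper resolves exactly the hurdle you flag---controlling $\sigma\cap C(o)$ in the rigidity step---not by strengthening the rigidity argument for arbitrary $\sigma$, but by first restricting to \emph{generic} minimizing geodesics (Definition~8.1): those whose interior is transverse to the regular part of $C(o)$ and disjoint from the singular part. For generic $\sigma$, Proposition~\ref{p:disjoint} gives the missing implication: if $L_o\circ\sigma$ is two-sided differentiable on an interval (as it is once rigidity forces $L_o\circ\sigma=L_{\tilde o}\circ\tilde\sigma_0$), then the interior of $\sigma$ on that interval avoids $C(o)$ entirely, so Corollary~\ref{c:conjugate} (with Remark~\ref{r:conjugate}) applies cleanly. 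Generic geodesics are dense by Proposition~\ref{p:dense}, and the general case is then obtained by passing a sequence of generic Alexandrov triangles $\triangle\tilde o\tilde p\tilde q_k$ to a limit. Your direct attack on non-generic $\sigma$ cannot be completed as stated, because differentiability of $L_o\circ\sigma$ does not force $\sigma$ off $C(o)$ when $\sigma$ may be tangent to the regular cut locus.

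There is a second, smaller gap in your openness step. Certifying that the encounter at $T$ itself is not bad does not yet let you invoke Lemma~\ref{l:nobad} on $[0,T+\epsilon]$: there could be further encounters at parameters $t_k\searrow T$, and you must rule out that those are bad. The paper handles this with a short limiting argument: for such $t_k$ one has $\tilde\sigma_{t_k}^\uparrow,\tilde\sigma_{t_k}^\downarrow\to\tilde\sigma_0^\downarrow$, so by the inequalities of Lemma~\ref{l:formula} the quantity $(L_{\tilde o}\circ\alpha_k)_+'(t_k)-(L_o\circ\sigma)_+'(t_k)$ converges to something at least $(L_{\tilde o}\circ\alpha_0)_+'(t_0)-(L_o\circ\sigma)_+'(t_0)>0$; hence for large $k$ Lemma~\ref{l:4} applies and the encounters at $t_k$ are not bad either. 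Right-continuity of $(L_o\circ\sigma)_+'$ at $t_0$ (Proposition~8.4) is used here, and this again relies on $\sigma$ being generic.
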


\begin{proof}
We first prove this for generic triangles. Given $\triangle opq$, suppose the side $\sigma$ joining $p$ to $q$ is generic.  By Lemma  \ref{l:nobad} it suffices to show that $\sigma$ has no bad encounters with the cut locus.   Let $t_0$ be the supremum of all $t$ such that $\sigma|[0,t]$ has no bad encounters. Certainly, if $t_0 = d(p,q)$ then $\sigma$ has no bad encounters.   Let us assume $t_0 < d(p,q)$ and deduce a contradiction. It follows that  $\sigma$ must have an encounter with the cut locus at $t_0$, for otherwise would contradict the choice of $t_0$.  Since $\sigma$ approaches $C(\tilde p)$ from the far side, Lemma \ref{l:farside} implies
$$ L_o(\sigma(t)) \geq L_{\tilde o}(\tilde \sigma^\uparrow_0(t)) \mathrm{\enspace for\  all\enspace} 0\leq t \leq t_0$$
where $\tilde \sigma^\uparrow_0$ is the uppermost minimizing geodesic joining $\tilde p$ to $\tilde q_0 = \widetilde F^{-1}(F(\sigma(t_0)))$.   Let $\alpha_0$ be the arc in the cut locus joining $\tilde q_0$ to the trunk.  We claim
\begin{equation}\label{e:e0}
(L_o \circ \sigma)^\prime_+(t_0) < (L_{\tilde o}\circ \alpha_0)^\prime_+(t_0).
\end{equation}
There are two cases to consider.  Let $\tilde\sigma_0^\downarrow$ denote the lowermost minimizing geodesic from $\tilde p$ to $\tilde q_0$. If $\tilde\sigma_0^\uparrow \neq \tilde\sigma_0^\downarrow$, then (\ref{e:e0}) follows from Lemma \ref{l:6}. In case $\tilde\sigma_0^\uparrow = \tilde\sigma_0^\downarrow $, equation (\ref{e:e0}) holds as well.  Because if it didn't, then $\tilde q_0$ would be conjugate to $\tilde p$ along $\tilde\sigma_0= \tilde\sigma_0^\uparrow = \tilde\sigma_0^\downarrow$ and we would have
\begin{equation}
(L_o \circ \sigma)^\prime_+(t_0) = (L_{\tilde o}\circ \alpha_0)^\prime_+(t_0)= (L_{\tilde o} \circ \tilde\sigma_o)_+^\prime(t_0).
\end{equation}\label{e:ek} 
Consequently, by Proposition \ref{p:geodcomp}, $L_o(\sigma(t)) = L_{\tilde o}(\tilde\sigma_0(t))$ for all $ 0 \leq t \leq t_0$. Because $L_{\tilde o}\circ\tilde\sigma_0$ is differentiable, so is $L_o\circ \sigma$.  Thus by Proposition \ref{p:disjoint} the interior of $\sigma|[0, t_0]$ is disjoint from $C(o)$.  Hence by Corollary \ref{c:conjugate} with Remark \ref{r:conjugate}, $\sigma|[0, t_0]$ is not conjugate free and so does not minimize past $t_0$. This contradicts $\sigma$ minimizing to distance $d(p,q)$.  This establishes equation (\ref{e:e0}).

This also proves that the encounter at $t_0$ is not bad by Lemma \ref{l:4}.  We will next show that there exists an $\epsilon >0$ such that there are no bad encounters at $t$ for  $t_0 < t < t_0+\epsilon$ which will contradict the choice of $t_0$ assuming $t_0 < d(p,q)$. If no such $\epsilon$  exists, then there is a decreasing sequence $t_k$, $k=1,2,3, \dots$, such that $\lim_{k\to\infty} t_k = t_0$ and $\sigma$ has a bad encounter at $t_k$. For each $k$ set $ \tilde q_k = \widetilde F^{-1}(F(\sigma(t_k)))$ and let
$\alpha_k$ be the arc in $C(\tilde p)$ joining $\tilde q_k$ to the trunk, and let $\tilde \sigma_k^\uparrow$ and $\tilde \sigma_k^\downarrow$ denote the uppermost and lowermost minimizing geodesics joining $\tilde p$ to $\tilde q_k$. By Lemma \ref{l:formula}, for each $k$,
\begin{equation}\label{e:ek}
(L_{\tilde o}\circ\tilde \sigma_k^\uparrow)_+^\prime(t_k)  \leq (L_{\tilde o}\circ \alpha_k)_+^\prime(t_k) \leq(L_{\tilde o}\circ \tilde \sigma_k^\downarrow)_+^\prime(t_k).
\end{equation}
Clearly, both $\tilde\sigma_k^\uparrow$ and $\tilde\sigma_k^\downarrow$ converge to $\tilde\sigma_0^\downarrow$, and by (\ref{e:ek})
\begin{eqnarray*}
\lim_{k \to \infty} \left((L_{\tilde o}\circ \alpha_k)_+^\prime(t_k) - (L_o\circ \sigma)_+^\prime(t_k)\right) &=&
 (L_{\tilde o}\circ \tilde\sigma_0^\downarrow)_+^\prime(t_0) - (L_o\circ \sigma)_+^\prime(t_0)\\
 &\geq& (L_{\tilde o}\circ \alpha_0)_+^\prime(t_0) - (L_o\circ \sigma)_+^\prime(t_0) \\
 &> &0 .
\end{eqnarray*}
Thus for large enough $k$, $(L_{\tilde o}\circ \alpha_k)_+^\prime(t_k)> (L_o\circ \sigma)_+^\prime(t_k)$ so that by Lemma \ref{l:4} the encounters at $t_k$ are not bad after all. Thus there exists an $\epsilon >0$ such that there are no bad encounters at $t$ for  $t_0 < t < t_0+\epsilon$ contradicting the choice of $t_0$.  Therefore $\sigma$ has no bad encounters with the cut locus.

Now consider the general case.  Suppose that  $\triangle opq$ is a geodesic triangle in $M$.  Let $\sigma$  be the side joining $p$ to $q$.  By Proposition \ref{p:dense}, there exists a sequence of generic triangles $\triangle opq_k$ such that the sides $\sigma_k$ joining $p$ to $q_k$ converge to $\sigma$ and are  generic.  By the first part of the proof there exists a sequence of corresponding Alexandrov triangles $\triangle \tilde o\tilde p\tilde q_k$ in $\widetilde M$.  By choosing a subsequence if necessary, we may assume that the sides  $\tilde\sigma_k$ joining $\tilde p$ to $\tilde q_k$ converge to a geodesic $\tilde\sigma$ joining $\tilde p$ to some point $\tilde q$. Thus
by Alexandrov convexity from the base 
$$ d(o, \sigma(t)) = \lim_{k\to\infty} d(o,\sigma_k(t)) \geq \lim_{k\to\infty} d(\tilde o,\tilde\sigma_k(t))= d(\tilde o,\tilde \sigma(t))$$
for all $0\leq t \leq d(p,q)$.  Thus $\triangle\tilde o \tilde p\tilde q$ is an Alexandrov triangle corresponding to $\triangle opq$.
\end{proof}

\section{The slope field at cut points}

Let $\widetilde M$ be a model surface rotationally symmetric about $\tilde o$. Fix $\tilde p \in \widetilde M$, and let $\tilde q_0 \in C(\tilde p)$.  Let $\alpha$  be the  arc in $C(\tilde p)$ joining $\tilde q_0$ to the trunk.  Let $\bar\alpha$ denote  $\alpha$ reparameterized by arclength $s$ from $\tilde q_0$. For each $s \geq 0$ in the domain of $\bar\alpha$, let $\tilde\sigma_s^\uparrow$ and $\tilde\sigma_s^\downarrow$ respectively denote the uppermost and lowermost minimizing geodesics joining $\tilde p$ to $\bar\alpha(s)$. 
Note that if $\bar\alpha(0) = \tilde q_0$ is an endpoint of $C(\tilde p)$, then it may happen that  $\tilde\sigma_0^\uparrow = \tilde\sigma_0^\downarrow$, but otherwise $\tilde\sigma_s^\uparrow \neq \tilde\sigma_s^\downarrow$ if $s > 0$ since $\bar\alpha(s)$ for $s>0$ is not an endpoint of $C(\tilde p)$.   From the construction we have for every $s_0$ in the domain of $\bar\alpha$ that
\begin{equation}\label{e:A1}
\lim_{s \to s_0^+} \tilde\sigma_s^\uparrow =\tilde\sigma_{s_0}^\uparrow\quad\mathrm{and}\quad 
\lim_{s \to s_0^+} \tilde\sigma_s^\downarrow =\tilde\sigma_{s_0}^\downarrow.
\end{equation}
 (The analogous left--hand limit  does not hold at branch points of the cut locus.)
Define
$\psi^\uparrow (s)$ and $\psi^\downarrow(s)$ to  be the respective angles that the respective tangent vectors to  $\tilde\sigma_s^\uparrow$ and $\tilde\sigma_s^\downarrow$ makes with the meridian $\mu$ through $\bar\alpha(s)$, specifically with $+\mu^\prime$.  Set
\begin{equation}\label{e:A2}
\phi(s) = \frac{ \psi^\uparrow(s) + \psi^\downarrow(s)}{2}.
\end{equation}
These three angles may be equal  for $s=0$, if $\tilde q_0$ is an end point of the cut locus, but if $s > 0$, then one has the strict inequality
\begin{equation}\label{e:A3}
0 < \psi^\downarrow(s) < \phi(s) < \psi^\uparrow(s) < \pi.
\end{equation}
By equations (\ref{e:A1}) and (\ref{e:A2}), $\psi^\downarrow$, $\psi^\uparrow$ and $\phi$ are continuous on the right.
On the open dense set of parameters $s$ where $\bar\alpha(s)$ is a regular cut point, $\bar\alpha(s)$ is smooth and $\phi(s)$ is the angle that  the tangent vector $\bar\alpha^\prime(s)$ makes with the meridian through $\bar\alpha(s)$ because  $\bar\alpha^\prime(s)$ makes the same angle with both $\tilde\sigma_s^{\uparrow}$ and $\tilde\sigma_s^{\downarrow}$ by  \cite{O}.

At parameter values $s$ where $\bar\alpha(s)$ is a regular cut point, the first variation formula gives:
\begin{equation}\label{e:A4}
 (L_{\tilde o}\circ\bar\alpha)^\prime(s) = \cos \phi(s) 
 \end{equation}
 and
 \begin{eqnarray}\label{e:A5}
 (L_{\tilde p}\circ \bar\alpha)^\prime(s) &=& \cos(\phi(s) - \psi^\downarrow(s)) = \cos(\psi^\uparrow(s) - \phi(s))\cr
 &=& \cos\left( \frac {\psi^\uparrow(s)-\psi^\downarrow(s)}{2}\right) > 0.
\end{eqnarray}
Likewise, with $t = d(\tilde p, \bar\alpha(s))$, 
\begin{equation}\label{e:A6}
(L_{\tilde o}\circ \tilde\sigma_s^\uparrow)^\prime(t) = \cos \psi^\uparrow(s)\quad\mathrm{and}\quad (L_{\tilde o}\circ \tilde\sigma_s^\downarrow)^\prime(t) = \cos \psi^\downarrow(s).
\end{equation}
The curve $\alpha$ is the curve $\bar\alpha$ parameterized by the distance $t$ from the point $\tilde p$. Thus $\frac{dt}{ds} = (L_{\tilde p}\circ\bar\alpha)^\prime(s)$. Therefore
by the chain rule and equations (\ref{e:A4}) and (\ref{e:A5}) we have
\begin{equation}\label{e:A7}
( L_{\tilde o}\circ\alpha)^\prime(t)  = \frac{ (L_{\tilde o}\circ \bar\alpha)^\prime(s)}{ (L_{\tilde p}\circ \bar\alpha)^\prime(s)} = \frac {  \cos \phi(s) }{ \cos\left( \frac {\psi^\uparrow(s)-\psi^\downarrow(s)}{2}\right) }.
\end{equation}

\begin{lemma} 
If  $\bar\alpha(s)$ is a regular cut point and $t = d(\tilde p, \bar\alpha(s))$, then
\begin{equation*}
 (L_{\tilde o}\circ \tilde\sigma_s^\uparrow)^\prime(t) <  (L_{\tilde o}\circ\alpha)^\prime (t) <(L_{\tilde o}\circ \tilde\sigma_s^\downarrow)^\prime(t) .
\end{equation*}
\end{lemma}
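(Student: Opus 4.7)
The plan is to reduce this to a short trigonometric inequality using the formulas already derived in equations (A.4)--(A.7). Writing $A = \psi^\uparrow(s)$ and $B = \psi^\downarrow(s)$, the hypothesis that $\bar\alpha(s)$ is a regular cut point at $s>0$ gives
$0 < B < \phi(s) = (A+B)/2 < A < \pi$ by \eqref{e:A2} and \eqref{e:A3}, and in particular $0 < (A-B)/2 < \pi/2$.
Combining \eqref{e:A6} and \eqref{e:A7}, the desired double inequality becomes
\begin{equation*}
\cos A \;<\; \frac{\cos\frac{A+B}{2}}{\cos\frac{A-B}{2}} \;<\; \cos B.
\end{equation*}

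First I would clear the positive denominator $\cos\frac{A-B}{2}$ and expand using the angle-addition formulas
$\cos A = \cos\frac{A+B}{2}\cos\frac{A-B}{2} - \sin\frac{A+B}{2}\sin\frac{A-B}{2}$ and
$\cos B = \cos\frac{A+B}{2}\cos\frac{A-B}{2} + \sin\frac{A+B}{2}\sin\frac{A-B}{2}$. A short calculation then yields the two identities
\begin{equation*}
\cos\tfrac{A+B}{2} - \cos A\,\cos\tfrac{A-B}{2} \;=\; \sin\tfrac{A-B}{2}\,\sin A,
\end{equation*}
\begin{equation*}
\cos B\,\cos\tfrac{A-B}{2} - \cos\tfrac{A+B}{2} \;=\; \sin\tfrac{A-B}{2}\,\sin B.
\end{equation*}

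Finally I would invoke the bound $0 < B < A < \pi$ together with $0 < (A-B)/2 < \pi/2$ to conclude that $\sin A$, $\sin B$, $\sin\frac{A-B}{2}$ and $\cos\frac{A-B}{2}$ are all strictly positive. Dividing both identities by $\cos\frac{A-B}{2} > 0$ then delivers the strict inequalities in the required direction, completing the proof. The main step is simply spotting the right product-to-sum manipulation; there is no genuine obstacle beyond this routine trigonometric identity, since everything hinges on the strict ordering of the three angles $\psi^\downarrow$, $\phi$, $\psi^\uparrow$ already recorded in \eqref{e:A3}.
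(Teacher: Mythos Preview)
Your proof is correct and follows essentially the same route as the paper's: both arguments reduce the inequality, via (\ref{e:A6}) and (\ref{e:A7}), to the trigonometric identities you display, which after dividing by $\cos\frac{A-B}{2}$ are exactly the paper's equations (\ref{e:A10}) and (\ref{e:A10a}) (with the roles of $A,B$ relabeled). The only cosmetic difference is that the paper derives these via the intermediate formula $\frac{\cos(A+B)}{\cos(B-A)}=\cos(2B)+\sin(2B)\tan(B-A)$, whereas you substitute the angle-addition expansions directly.
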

\begin{proof}
The following trig identity is derived from the addition formula for the cosine:
\begin{equation}\label{e:A8}
\cos(A+B) = \cos(2B -(B-A)) = \cos(2B) \cos(B-A) + \sin(2B)\sin(B-A).
\end{equation}
Thus, 
\begin{equation}\label{e:A9}
\frac {\cos(A+B)}{\cos(B-A)} = \cos(2B) + \sin(2B) \tan(B-A).
\end{equation}
On setting $ A = \frac {\psi^\downarrow(s)} 2$ and $B = \frac {\psi^\uparrow(s)} 2$,  we have $A+B = \phi(s)$ and $B-A =  \frac {\psi^\uparrow(s)- \psi^\downarrow(s)} 2 $.
Therefore by (\ref{e:A9})
\begin{equation}\label{e:A10}
\frac{\cos(\phi(s))}{ \cos\left( \frac {\psi^\uparrow(s)- \psi^\downarrow(s)} 2\right)} = \cos(\psi^\uparrow(s)) +  \sin( \psi^\uparrow(s)) \tan \left(  \frac {\psi^\uparrow(s)- \psi^\downarrow(s)} 2 \right).
\end{equation}
Clearly the second term on the right is strictly positive since $\psi^\downarrow(s) \neq \psi^\uparrow(s)$ because $\bar\alpha(s)$, being a regular cut point,  is not an end point.  The inequality on the left now follows from (\ref{e:A10}) on account of (\ref{e:A5}), (\ref{e:A6}) and (\ref{e:A7}). The inequality on the right is proved similarly by setting $ A = \frac {\psi^\uparrow(s)} 2$ and $B = \frac {\psi^\downarrow(s)} 2$ in (\ref{e:A9}) which leads to the equation
\begin{equation}\label{e:A10a}
\frac{\cos(\phi(s))}{ \cos\left( \frac {\psi^\uparrow(s)- \psi^\downarrow(s)} 2\right)} = \cos(\psi^\downarrow(s)) -  \sin( \psi^\downarrow(s)) \tan \left(  \frac {\psi^\uparrow(s)- \psi^\downarrow(s)} 2 \right).
\end{equation}
\end{proof}

The next Lemma calculates the value of the slope field $\frak{s}(x,y)$ at $x= t$ and $y = L_{\tilde o}(\alpha(t))$.
  
\begin{lemma}\label{l:formula}
For every $s$ in the domain of $\bar\alpha$, if $t = d(\tilde p, \bar\alpha(s))$, then the right--hand derivative
 \begin{eqnarray*}
(L_{\tilde o}\circ\alpha)_+^\prime (t) &=& (L_{\tilde o}\circ \tilde\sigma_s^\uparrow)^\prime(t)   + \sin( \psi^\uparrow(s)) \tan \left(  \frac {\psi^\uparrow(s)- \psi^\downarrow(s)} 2 \right) \cr
&=&  (L_{\tilde o}\circ \tilde\sigma_s^\downarrow)^\prime(t)   - \sin( \psi^\downarrow(s)) \tan \left(  \frac {\psi^\uparrow(s)- \psi^\downarrow(s)} 2 \right).
\end{eqnarray*}
 In particular
\begin{equation*}
(L_{\tilde o}\circ \tilde\sigma_s^\downarrow)^\prime(t)  \geq (L_{\tilde o}\circ\alpha)_+^\prime (t) \geq (L_{\tilde o}\circ \tilde\sigma_s^\uparrow)^\prime(t) 
\end{equation*}
where both inequalities are strict if $s>0$.
\end{lemma}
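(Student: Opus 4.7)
The plan is to first verify the two equalities at parameter values $s$ where $\bar\alpha(s)$ is a regular cut point and then extend them to every $s$ in the domain of $\bar\alpha$ by a right-continuity argument based on the absolute continuity of $L_{\tilde o}\circ\bar\alpha$ and $L_{\tilde p}\circ\bar\alpha$. The inequality at the end then follows at once from the sign of the correction term.

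At a parameter $s$ where $\bar\alpha(s)$ is a regular cut point, equation (\ref{e:A7}), together with the identity (\ref{e:A9}) taken with $A=\psi^\downarrow(s)/2$ and $B=\psi^\uparrow(s)/2$, gives
$$(L_{\tilde o}\circ\alpha)^\prime(t)=\frac{\cos\phi(s)}{\cos\bigl((\psi^\uparrow(s)-\psi^\downarrow(s))/2\bigr)}=\cos\psi^\uparrow(s)+\sin\psi^\uparrow(s)\tan\left(\tfrac{\psi^\uparrow(s)-\psi^\downarrow(s)}{2}\right),$$
which, in view of (\ref{e:A6}), is the first claimed equality. The second equality comes from the analogous rewriting (\ref{e:A10a}).

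For an arbitrary $s_0$, possibly at a branch point or other singular cut point, I would argue as follows. Since $\bar\alpha$ is unit-speed and $L_{\tilde o},L_{\tilde p}$ are $1$-Lipschitz, both $L_{\tilde o}\circ\bar\alpha$ and $L_{\tilde p}\circ\bar\alpha$ are Lipschitz, hence absolutely continuous. The regular cut points have full measure along $\bar\alpha$, so (\ref{e:A4}) and (\ref{e:A5}) hold almost everywhere, giving
$$L_{\tilde o}(\bar\alpha(s_0+\delta))-L_{\tilde o}(\bar\alpha(s_0))=\int_{s_0}^{s_0+\delta}\cos\phi(s)\,ds$$
and the analogous identity for $L_{\tilde p}$. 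Because the integrand in the latter is positive, $L_{\tilde p}\circ\bar\alpha$ is strictly increasing, so for every small $h>0$ there is a unique $\delta(h)>0$ with $L_{\tilde p}(\bar\alpha(s_0+\delta(h)))=t_0+h$, and $\delta(h)\to 0^+$ as $h\to 0^+$. Since $\alpha$ is the reparametrization of $\bar\alpha$ by $L_{\tilde p}$, one has $\alpha(t_0+h)=\bar\alpha(s_0+\delta(h))$, whence
$$\frac{L_{\tilde o}(\alpha(t_0+h))-L_{\tilde o}(\alpha(t_0))}{h}=\frac{\displaystyle\int_{s_0}^{s_0+\delta(h)}\cos\phi(s)\,ds}{\displaystyle\int_{s_0}^{s_0+\delta(h)}\cos\bigl((\psi^\uparrow(s)-\psi^\downarrow(s))/2\bigr)\,ds}.$$
Letting $h\to 0^+$ and invoking the right-continuity of $\psi^\uparrow$, $\psi^\downarrow$, and $\phi$ at $s_0$---a consequence of (\ref{e:A1}) and (\ref{e:A2})---the ratio converges to $\cos\phi(s_0)/\cos\bigl((\psi^\uparrow(s_0)-\psi^\downarrow(s_0))/2\bigr)$, and another application of (\ref{e:A9}) (and of (\ref{e:A10a})) yields both equalities.

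Finally, the inequality $(L_{\tilde o}\circ\alpha)_+^\prime(t)\ge(L_{\tilde o}\circ\tilde\sigma_s^\uparrow)^\prime(t)$ holds because $\psi^\uparrow(s)\in(0,\pi)$ makes $\sin\psi^\uparrow(s)>0$ and $\psi^\uparrow(s)\ge\psi^\downarrow(s)$ makes $\tan\bigl((\psi^\uparrow(s)-\psi^\downarrow(s))/2\bigr)\ge 0$, with equality precisely when $\psi^\uparrow(s)=\psi^\downarrow(s)$; by (\ref{e:A3}) this can happen only at $s=0$. The companion inequality is obtained in the same way from the second equality. The main obstacle is the behavior of $\bar\alpha$ at a branch point of the cut locus, where it need not be smooth and where the angles $\psi^\uparrow,\psi^\downarrow$ need not be left-continuous; this is why the argument must use absolute continuity together with one-sided continuity of the integrands, and why the conclusion is phrased in terms of the right-hand derivative.
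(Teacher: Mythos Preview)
Your proof is correct and follows essentially the same approach as the paper: both use the absolute continuity of $L_{\tilde o}\circ\bar\alpha$ and $L_{\tilde p}\circ\bar\alpha$, the fact that regular cut points have full measure so that (\ref{e:A4}) and (\ref{e:A5}) hold almost everywhere, and the right-continuity of $\psi^\uparrow,\psi^\downarrow,\phi$ to conclude that the right-hand derivatives exist everywhere and equal the right-continuous extensions of the integrands, after which the trigonometric identities (\ref{e:A10}) and (\ref{e:A10a}) finish the job. Your treatment of the difference quotient as a ratio of integrals is a slightly more explicit rendering of the paper's remark that these are indefinite integrals of right-continuous functions, but the substance is identical.
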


\begin{proof}
Since $\bar\alpha$ is parameterized by arclength, it is a Lipschitz 1 curve. Thus $L_{\tilde p}\circ \bar\alpha$ and $L_{\tilde o}\circ \bar\alpha$ are  Lipschitz 1 functions. Hence they are absolutely continuous.  Consequently, the derivatives $(L_{\tilde p}\circ \bar\alpha)^\prime(s)$ and $(L_{\tilde o}\circ \bar\alpha)^\prime(s)$
exist for almost all $s$.  But for almost all $s$, $\bar\alpha(s)$ is a regular cut point, and at regular cut points, 
equations (\ref{e:A4}) and (\ref{e:A5}) hold.  Thus $L_{\tilde p}\circ \bar\alpha$ and $L_{\tilde o}\circ \bar\alpha$ are absolutely continuous functions whose derivatives are equal almost everywhere to functions which are continuous on the right everywhere.  In other words, they are   indefinite Lebesgue integrals of functions which are continuous on the right at every point. Thus their right--hand derivatives  exist everywhere and satisfy
\begin{equation}\label{e:A11}
(L_{\tilde o}\circ \bar\alpha)_+^\prime(s) = \cos \phi(s) 
\end{equation}
and
\begin{equation}\label{e:A12}
 (L_{\tilde p}\circ \bar\alpha)_+^\prime(s) = \cos\left( \frac {\psi^\uparrow(s)-\psi^\downarrow(s)}{2}\right) > 0.
\end{equation}
By the chain rule
\begin{equation}\label{e:A13}
( L_{\tilde o}\circ\alpha)_+^\prime(t)  = \frac{ (L_{\tilde o}\circ \bar\alpha)_+^\prime(s)}{ (L_{\tilde p}\circ \bar\alpha)_+^\prime(s)} .
\end{equation}
The result now follows from equations (\ref{e:A6}), (\ref{e:A10}), (\ref{e:A10a}),  (\ref{e:A11}), (\ref{e:A12}) and (\ref{e:A13}).
\end{proof}

\end{document}